%Submitted 29.12.2025
%You can track the status of your article and contact the journal or editors through your article-status page:
% https://ef.msp.org/status.php?p_id=238147&cr=EECEA2D352

\documentclass{article}

%for institute
\usepackage{authblk}

\usepackage{amsfonts,amscd,amsmath,amssymb,amsfonts,latexsym}
\usepackage{graphicx} % Required for inserting images
%todo: remove
\usepackage[usenames,dvipsnames]{xcolor}
\usepackage{enumerate}
\usepackage{manfnt}
\usepackage[shortlabels]{enumitem}
\usepackage{makecell}
\usepackage{geometry}
\usepackage{tikz-cd}
\usepackage{hyperref}

\hypersetup{colorlinks=true,linkcolor=blue,urlcolor=blue,citecolor=blue}

\usepackage{amsthm}
\newtheorem{theorem}{Theorem}[section]
\newtheorem{proposition}[theorem]{Proposition}
\newtheorem{lemma}[theorem]{Lemma}
\newtheorem{corollary}[theorem]{Corollary}

\theoremstyle{definition}
\newtheorem{definition}[theorem]{Definition}
\newtheorem*{definition*}{Definition}
\newtheorem{remark}[theorem]{Remark}
\newtheorem{example}[theorem]{Example}

\sloppy
\title{On pre-local tabularity above \(\LS4\times\LS4\)}
\author{Ilya B. Shapirovsky\thanks{%\thanks{
The work of the first author was supported by NSF Grant DMS - 2231414.% \IS{Move it}
%}
}, Vladislav V. Sliusarev  }

\affil{New Mexico State University}

%\date{}

\newcommand\hide[1]{{\empty}}

\newcommand\ISH[1]{{\bf IS}: {\color{red} #1}}
\newcommand\IS[1]{\ISH{#1}}
\renewcommand\ISH[1]{\empty} 
\newcommand\ISLater[1]{{\bf IS}: {\color{blue} #1}}
\renewcommand\ISLater[1]{\empty} 
\newcommand\VS[1]{{\bf VS}: {\color{blue}#1}}
\renewcommand\VS[1]{\empty}
\newcommand\VSLater[1]{{\bf VS}: {\color{blue}Later: #1}}
\renewcommand\VSLater[1]{\empty}
\newcommand\todo[1]{ [~ {\color{BrickRed} #1 }]}
\renewcommand\todo[1]{\empty}

\newcommand\extended[1]{ {\color{BlueViolet} Extended:~ #1 }]}
\renewcommand\extended[1]\empty

\newcommand\improve[1]{ [~ {\color{BlueGreen}\noindent{\bf Improve:} #1 }]}
\renewcommand\improve[1]\empty

%%%
% \def\Al{\mathrm{A}}
\def\Al{\O}
\def\AlA{\Al}
\def\AlA{\O}

\def\toto{\twoheadrightarrow}

\newcommand{\tand}{\text{ and }}

\newcommand{\tiff}{\text{ iff }}

\def\clV{\mathcal{V}}

\def\clF{\mathcal{F}}
\def\clG{\mathcal{G}}

\def\Log{\myoper{Log}}
\newcommand\myoper[1]{\mathop{\myopts{#1}}}
\newcommand\myopts[1]{\mathrm{#1}}
\def\Di{\lozenge}
\def\Div{\lozenge^\vee}

\def\imp{\rightarrow}

\newcommand\LogicNamets[1]{\logicts{#1}}
\newcommand\logicts[1]{{\textsc{#1}}}
\newcommand\LS[1]{\LogicNamets{S#1}}
\newcommand\LK[1]{\LogicNamets{K#1}}
\newcommand\Grz{\LogicNamets{Grz}}
\newcommand\U{\LogicNamets{U}}
\newcommand\Udd{\LogicNamets{U}^{\downarrow}}
\newcommand\Triv{\LogicNamets{Triv}}
\newcommand\LinT{\LogicNamets{LinT}}
\newcommand\LinTGrz{\LogicNamets{LinTGrz}}
\def\vL{L}
\def\Alg{\myoper{Alg}}

\def\clU{\mathcal{U}}

\def\clV{\mathcal{V}}

\def\clV{\mathcal{V}}
\def\clS{\mathcal{S}}

\def\clP{\mathcal{P}}
\def\EE{\exists}
\def\AA{\forall}

\def\dom{\myoper{dom}}
\def\restr{{\upharpoonright}}

%\newcommand\Cl{\myoper{Cl}}

%\newcommand\Alg{\myoper{Alg}}
% all classes
%\def\v{\mathcal{v}}
\def\v{\theta}

%modal depth
\newcommand{\md}[1]{\mathrm{md}\,#1}

% all defective classes;

\def\vf{\varphi}
\def\mo{\vDash}

\def\con{\wedge}
\def\lra{\leftrightarrow}
\def\emp{\varnothing}

\newcommand\languagets[1]{\logicts{#1}}

\def\PV{\languagets{PV}}

\newcommand{\gen}[2]{#1\langle#2\rangle}

\newcommand{\rect}[2]{\boldsymbol#1 \times \boldsymbol#2}

\newcommand{\inv}{^{-1}}

\def\TL{\LogicNamets{Tack}}
\def\TF{\textsc{T}}

\def\MatchL{\LogicNamets{Match}}
\def\MatchF{\textsc{MF}}

\newcommand\quot[2]{{#1}{/}{#2}}

\def\FORP{\mathrm{RP}}

\newcommand{\fusion}[2]{#1*#2}
\newcommand{\LCom}[2]{\left[#1,#2\right]}

\newcommand{\BoxM}{\Box^*}
\newcommand{\DiM}{\Di^*}

\newcommand{\formts}[1]{\boldsymbol{#1}}
\def\rp{\formts{rp}}

\def\com{\formts{com}}
\def\chr{\formts{ChR}}
\def\csym{\formts{csym}}

\def\presym{\formts{presym}}
\def\bh{\formts{bh}}
\def\cas{\formts{cas}}
\def\dd{\formts{dd}}
\def\conv{\formts{conv}}
\def\lin{\formts{lin}}
\def\mck{\formts{McK}}

\DeclareMathOperator{\h}{h}

\newcommand{\Noe}{\mathcal{PN}}
\newcommand{\LPN}{\logicts{PN}}

\newcommand{\A}{\mathfrak{A}}
\renewcommand{\O}{\mathcal{O}}
\newcommand{\MF}{\languagets{MF}}

\newcommand{\rep}[2]{#1(#2)}
\DeclareMathOperator{\kripke}{\mathbf{k}}

\newcommand{\lrplus}{\overset{\lra}{\oplus}}
\newcommand{\sngl}{\boldsymbol{\circ}}

\newcommand{\case}[1]{\noindent\textsc{Case #1}}

\begin{document}

\maketitle

\setcounter{page}{1}

\begin{abstract}
We investigate pre-local tabularity in normal extensions of the logic $\LS{4}\times \LS{4}$.
We show that there are exactly four pre-locally tabular logics in normal extensions of products of finite height, 
and that 
every non-locally tabular  logic in this family is contained in one of them.  We also give an axiomatic criterion
of local tabularity 
above the logic of products with Noetherian skeletons.   
Finally, we discuss examples of pre-locally tabular extensions of $\LS{4}\times \LS{4}$ outside this class, including  logics with the converse and universal modalities. 
\end{abstract} 
\section{Introduction}

% A logic is {\em locally tabular} (in other terms -- {\em locally finite}), if  each of its finite-variable fragments contains only a finite number of pairwise nonequivalent formulas.\IS{Old sloppy staff. Something like this? - 
A logic $L$ is {\em locally tabular} (in other terms, {\em locally finite}), if  for each finite set of variables, there are only a finite number of pairwise nonequivalent in $L$ formulas.
% }
A logic is {\em pre-locally tabular}, if it is not locally tabular and each of its normal extensions is locally tabular. \ISLater{Say that we only consider 
normal logics.}

It is known that in the extensions of the logic of preorders $\LS{4}$, there is a unique pre-locally tabular logic $\Grz.3$,  
%\VS{sort out \(\Grz3\) vs \(\Grz.3\)}\IS{Done}
the logic of Noetherian linear orders  \cite[Propositions~2.1, 2.4]{Maks1975LT}\cite[Theorem~12.23]{CZ}. Also, 
every extension of $\LS{4}$ is either locally tabular or is contained in the single pre-locally tabular logic. In general, the picture is unclear even in the unimodal case. In particular, it is an open problem
whether every non-locally tabular unimodal logic is contained in a pre-locally
tabular logic \cite[Problem 12.1]{CZ}. \ISLater{What about $\LK{4}$?}

%\IS{New staff}
We are interested in pre-local tabularity in the extensions of the logic $\LS{4} \times  \LS{4}$.  
%Such examples has been known for extensions of $\LS{4} \times  \LS{5}$. 
In \cite{NickS5}, it was shown that $\LS{5}\times \LS{5}$ is pre-locally tabular.
%Existence of two such  examples in extensions of 
%$\Grz\times \LS{5}$,  where $\Grz$ is the  Grzegorczyk logic, 
%follows from 
%\cite{Guram-MH-3-2000} and \cite{Guram-BlokEsakia-2009}.
Another pre-locally tabular logic above  $\LS{4[2]} \times  \LS{5}$, 
where $\LS{4}[h]$ is the logic of preorderes of height $h$, was recently constructed in \cite{LTProductsArxiv}: it is a bimodal version $\TL_1$ of the logic of the {\em tack} frame,  the ordered sum of a countable cluster and a singleton. We describe two more pre-locally tabular logics, $\TL_2$ and $\TL_{12}$, which are also characterized by versions of the tack. 

Our main result shows that $\TL_1,\TL_{2},\TL_{12}$, and $\LS{5}\times \LS{5}$  are exactly four pre-locally tabular logics in  extensions of 
the logics $\LS{4}[h]\times \LS{4}[l]$ with $l,h$ finite.
%of finite height.\improve{\IS{Wording?} }  
%products of extensions of $\LS{4}$ of finite height.\improve{\IS{Wording?} }  
As a corollary, we obtain  an axiomatic criterion
of local tabularity for the extensions of the logic of products with Noetherian skeletons.

The general picture of pre-local tabularity above $\LS{4}\times \LS{4}$
appears to be much more difficult. 
%is unclear even in the extensions 
%of $\LS{4}\times \LS{5}$. 
\improve{\IS{Now it seems that there is only one?} Existence of two pre-locally tabular logics in the extensions of 
$\Grz\times \LS{5}$,  where $\Grz$ is the  Grzegorczyk logic, 
follows from 
\cite{Guram-MH-3-2000} and \cite{Guram-BlokEsakia-2009}.}
In the final section, we discuss various examples of pre-locally tabular logics in extensions 
of $\LS{4}\times \LS{4}$, in particular,
pre-locally tabular logics with the converse and universal modalities. 

%Finally, we discuss examples of pre-locally tabular extensions of $\LS{4}\times \LS{4}$ outside this class, including  logics with the converse and universal modalities. 

\hide{\IS{Old intro:}

Our main results pertain to normal extensions of products of finite height.   
It is known  that $\LS{5}\times \LS{5}$ is pre-locally tabular \cite{NickS5}.  
 Another example was recently constructed in \cite{LTProductsArxiv}: it is a bimodal version of the logic of the {\em tack} frame,  the ordered sum of a countable cluster and a singleton. We describe two more pre-locally tabular logics, 
also characterized by versions of the tack, and show that these are exactly four pre-locally tabular logics in normal extensions of products of finite height.  
As a corollary, we obtain  an axiomatic criterion
of local tabularity for the extensions of the logic of products with Noetherian skeletons.

We are interested in pre-local tabularity in the extensions of the logic $\LS{4} \times  \LS{4}$.  
\IS{History: Examples are known above $\LS{4} \times  \LS{5}$:   Monadic Heyting; Nick. Tack}
One example of its pre-locally tabular extensions is $\LS{5}\times \LS{5}$ \cite{NickS5}.  Another example was recently constructed in \cite{LTProductsArxiv}: it is a bimodal version of the logic of the {\em tack} frame,  the ordered sum of a countable cluster and a singleton. We describe two more pre-locally tabular logics, 
also characterized by versions of the tack, and show that these are exactly four pre-locally tabular logics in normal extensions of products of finite height.  
As a corollary, we obtain  an axiomatic criterion
of local tabularity for the extensions of the logic of products with Noetherian skeletons.   
Finally, we discuss examples of pre-locally tabular extensions of $\LS{4}\times \LS{4}$ outside this class, including  logics with the converse and universal modalities. 
}

%Finally, we construct examples of pre-locally tabular extensions of $\LS{4}\times \LS{4}$ outside this class. 
%In particular, we  describe pre-locally tabular bimodal logics with the converse and  universal  modalities. 

\improve{\IS{Continuum above K4; Words about finite height; more details on criterion; structure of the paper}}
  
\section{Preliminaries}

%\IS{Paper about logics; first - define them}
\subsection{Modal  logics and modal algebras}

Let $\O$ be a finite set called an {\em alphabet of modalities}. 
{\em Modal formulas over $\AlA$}, $\MF(\AlA)$ in symbols, are constructed from
a countable set of {\em variables} $\PV = \{p_i \mid i < \omega\}$ using Boolean connectives and unary connectives $\Di\in \AlA$. We abbreviate $\neg \Di \neg \vf$ as $\Box\vf$. 
The terms {\em unimodal} and {\em bimodal} refer to the cases $\AlA=\{\Di\}$ and \(\AlA = \{\Di_1,\Di_2\}\), respectively.
  A \emph{\(k\)-formula} is a modal formula in variables \(\{p_i \mid i < k\}\).

  \smallskip

By an {\em $\AlA$-logic} $L$ we mean a normal modal logic whose alphabet of modalities is $\AlA$ (see, e.g., \cite[Section 1.6]{BDV}), that is: 
$L$ is a set of $\Al$-formulas 
that contains all classical tautologies, the axioms $\neg\Di \bot$  and
$\Di  (p \vee q) \imp \Di p\vee \Di  q$ for each $\Di$  in $\AlA$, and is closed under the rules of modus ponens,
substitution and {\em monotonicity}; the latter means that for each $\Di$ in $\AlA$, $\vf\imp\psi\in \vL$ implies $\Di \vf\imp\Di \psi\in \vL$.

The notation~\(L + \Gamma,\) where~\(L\) is a modal logic and~\(\Gamma\) is a set of formulas in the same signature, refers to the smallest modal logic that contains~\(L\cup \Gamma.\)
When~\(\Gamma = \{\varphi\},\) we abbreviate it as~\(L + \varphi.\)

\smallskip

  An \emph{\(\O\)-modal algebra} is a Boolean algebra extended with a family~\(\O\) of unary operations that validate the equations~\(\Di \bot = \bot\) and \(\Di(p \lor q) = \Di p \lor \Di q\), for each  \(\Di\) in~\(\O\).
A modal formula $\vf$ is {\em valid} in
an algebra $A$, if in $A$ we have $\vf = \top$. 
It is well-known that $L$ is a modal logic iff $L$ is the set of formulas valid in a modal algebra; see, e.g., \cite[Section 5.2]{BDV}. 
We say that $\A$ is an {\em $L$-algebra}, 
if each $\vf\in L$ is valid in $\A$. 

We use the following terminology and notation in modal algebras. 
A \emph{valuation} in~\(\A\) is a function \(\theta:\:\PV \to A\), where 
\(A\) is the carrier set of $\A$. 
A \emph{\(k\)}-valuation on \(\A\) is a function \(\theta:\:\{p_i \mid i < k\} \to A\). 
  A valuation~\(\theta\) on~\(\A\) naturally extends to~\(\overline{\theta}:\:\MF(\O) \to \A\).  Likewise for $k$-valuations and values of $k$-formulas. \ISLater{This should be moved. Perhaps, we do not even need this for algebras, we only use it in frames and models. So usual Kripke model semantics should be given, and algebraic counterpart should be a comment.}
  %We use the same notation   $\overline{\theta}$ for the values of $k$-formulas. 

%The class of all $L$-algebras is a variety.  

%The \emph{variety of~\(L\)-algebras} is the class of all modal algebras that validate~\(\varphi = \top\) for all~\(\varphi\in L.\)

\begin{definition}
Let \(L\) be a logic.
Formulas \(\varphi\) and \(\psi\) are \emph{\(L\)-equivalent}, if \(\varphi \lra \psi \in L\).
We say that a logic $L$ is {\em $k$-finite} for $k<\omega$, there are finitely many \(L\)-equivalence classes in the set of all \(k\)-formulas.
$L$ is {\em locally tabular}, if it is $k$-finite for each $k<\omega$. 
And \(L\) is \emph{pre-locally tabular}, if~\(L\) is not locally tabular and every proper extension $L'$ of $L$ (over the same alphabet)
%logic~\(L' \supsetneq L\) 
is locally tabular.

\smallskip 
 
An algebra is said to be {\em $k$-generated}, if it is generated by a set of size at most $k$. 
  A class of modal algebras is said to be~\emph{\(k\)-finite}, if any \(k\)-generated algebra in this class is finite, and \emph{locally finite}, if it is~\(k\)-finite for all~\(k < \omega.\) 
\end{definition}

Hence: a logic \(L\) is $k$-finite iff the class (variety) of~\(L\)-algebras is \(k\)-finite; $L$ is locally tabular iff the variety of~\(L\)-algebras is locally finite.

% When~\(\clK = \{\A\}\) is a singleton class, we will omit the braces and refer to~\(\Log \A\) as \emph{the modal logic of~\(\A\)}.
% We denote the modal logic of all unimodal algebras by~\(\LK.\)

%\subsection{General frames. %The general~\(k\)-canonical frame\IS{moved}
%}
\subsection{Relational semantics}  

  Let~\(X\) be a set. We use the following notation: 
  \begin{itemize} 
    \item Let~\(\Delta_X\) denote the \emph{diagonal relation}~\(\{(a,a)\mid a\in X\}\) and~\(\nabla_X\) denote the \emph{universal relation} \(X\times X\).
    We will omit the subscript \(X\) when it is unambiguous.
    \item For~\(R,\,S \subseteq X\times  X,\) let~\(R \circ S = \{(a,b)\mid \exists c\in X\,(aRc \tand cRb)\}\).
    \item Let~\(R\subseteq X\times  X.\)
    We denote: \(R\inv = \{(b,a)\mid(a,b)\in R\}\); \(R^0 = \Delta_X\) and \(R^{n+1} = R^n \circ R\) for~\(n < \omega;\) \(R^* = \bigcup_{n<\omega} R^n;\) \(R\restr Y = R\cap (Y\times  Y)\);   \(R[Y] = \{a\in X \mid \exists b\in Y\,(bRa)\}\) for~\(Y \subseteq X;\)  and \(R(a) = R[\{a\}]\) for~\(a\in X.\)
  \end{itemize}

  Let~\(\O\) be a modal alphabet. % family of unary operators.
  A \emph{Kripke frame} for \(\O\) is a structure~\(F = (X,(R_\Di)_{\Di\in \O}),\) where~\(X\) is a set and~\(R_\Di\) is a binary relation on~\(X\) (that is,~\(R_\Di\subseteq X\times  X\)) for~\(\Di\in \O\).
  The \emph{algebra} \(\Alg F\) is defined as the powerset Boolean algebra of \(X\) with modal operations \(\Di Y = R_\Di\inv(Y)\).
  A \emph{general frame} for \(\O\) is a structure \(G = (F, A)\), where \(F\) is a Kripke frame for \(\O\) and~\(A\subseteq \clP{(X)}\) is the carrier set of some subalgebra~\(\A\) of \(\Alg F\).
  % A \emph{bimodal general frame}~\(G = (X,R_1,R_2,A)\), where~\(R_1\) and \(R_2\)   are binary relations and~\(\A\) is a bimodal algebra, is defined likewise.
  We refer to \(X\) as the \emph{domain}
  % \ISLater{one line above it was called the carrier}
  % \VS{No, it was about~\(A\)}
  of~\(G,\) written as~\(\dom G\).
  %, and to its elements as \emph{points} of~\(G.\) 
  We call~\(\A\) the \emph{algebra of~\(G\)} and denote it by~\(\Alg G.\)
  We refer to \(F\) as the \emph{underlying Kripke frame} of~\(G\), denoted by \(\kripke G\).
  We will identify a Kripke frame \(F\) with the general frame \((F, 2^{\dom F}).\) \ISLater{Rewrite}
  % We call~\(G\) a \emph{unimodal (bimodal) Kripke frame}, if~\(A = 2^X\).
 
A formula is {\em valid in \(G\)}, written~\(G \models \varphi,\) if it is valid in its algebra. \ISLater{Check English}
  %We say that a general frame~\(G\) \emph{validates} a formula~\(\varphi \in \MF(\O),\) written~\(G \models \varphi,\) if~\(\Alg G \models \varphi =  \top.\) 
  The modal logic \(\Log \clG\) of a class~\(\clG\) of general frames is the set of modal formulas~\(\{\varphi\in \MF(\O) \mid \forall G\in \clG\,(G\models \varphi)\}.\)
  The modal logic~\(\Log G\) of one general frame~\(G\) is defined likewise.

% Similarly, we use such notation as~\((G,\theta)\models \Gamma\) and~\((G,\theta),a\models \Gamma.\)

% \begin{definition}
%   Let~\(\A = (A,,\lor,\lnot,\bot,\top,\O)\) be a modal algebra.
%   The \emph{dual general frame}~\(\A^+\) for~\(\A\) is defined as \((X,(R_\Di)_{\Di\in \O}, s[A]),\) where~\(X\) is the set of ultrafilters on~\(A\), the relations \(R_\Di\) are defined by
%   \[
%     u R_\Di v \tiff \forall a \in v\,(\Di a \in u),\quad u,\,v\in X,
%   \]
%   for all~\(\Di\in \O,\) and~\(s[A]\) is the image of~\(A\) under the Stone embedding
%   \[
%     s(a) = \{u\in X \mid a \in u\},\quad a\in A.
%   \]
% \end{definition}
% \IS{We use canonical theorem in the from, where elements are sets of formulas, not sets of equivalence classes in free algebra; perhaps, the next definition should be a comment.}

%   A general frame~\(G = (X,(R_\Di)_\Al,A)\) is a \emph{Kripke frame}, if~$A = \clP{(X)}$.   When referring to Kripke frames, we will omit~\(A\).
% For instance, \(F = (X,R)\) is the unimodal Kripke frame \((X,R,\clP{(X)})\).   For any general frame~\(G = (X,(R_\Di)_\Al,A),\) its \emph{underlying Kripke frame} \((X,(R_\Di)_\Al)\) is denoted 
% ~\(\kripke G\). 
A modal logic \(L\) is said to be {\em Kripke complete}, if \(L = \Log \clF\) for some class~\(\clF\) of Kripke frames. 
\ISLater{We use $L$-frames. Is it defined?} 
\ISLater{We do not use valuations on algebras, we use them in frames.}

A \emph{(\(k\)-)model} is a pair \((F,\theta),\) where \(F=(X,(R_\Di)_\O)\) is a Kripke frame and~\(\theta\) is a \mbox{(\(k\)-)valuation} on \(\Alg F\). The \emph{algebra \(\Alg(F,\theta)\)} of a (\(k\)-)model \((F,\theta)\) is the subalgebra of \(\Alg F\) generated by the valuations of variables \(\theta(p)\).
We say that a modal formula~\(\varphi\) is \emph{true at a point~\(a \in X\) in~\((F,\theta)\),} if~\(a\in \overline{\theta}(\varphi),\) and \emph{true in~\((F,\theta),\)}\ISLater{Do we use it?} if it is true at every point in~\((G,\theta)\). 
We write it as~\((F,\theta),a\models \varphi\) and~\((F,\theta) \models \varphi,\) accordingly.  \ISLater{This is upside-down. This should be given before we give validity.}
  
It follows directly from the definition that~\(F\models \varphi\) iff~\((F,\theta),a\models \varphi\) for any valuation~\(\theta\) and any point~\(a.\)

The previous definitions and notations generalize from single formulas to sets of formulas.
For instance,~\(G\models \Gamma\) for a set~\(\Gamma\subseteq\MF(\O),\) if~\(G\models \varphi\) for any \(\varphi\in \Gamma\).\ISLater{Check if we use it}

%\subsection{Truth-preserving operations on general frames}
%IS: This starts with the subframe construction. And why do we need this subsection at all? 

\begin{definition}
  Let \(G = (X,(R_\Di)_{\Di\in \O},A)\) be a general frame, \(Y \subseteq X.\)
  The \emph{restriction}~\(G\restr Y\) is the structure \((Y, (R_\Di\restr Y)_{\Di\in \O}, A\restr Y),\) where~\(A \restr Y = \{U\cap Y \mid U \in A\}.\) %\IS{It does not seem to be a general frame in general.}
  %\IS{This definition differs from the one in \cite{CZ}.}
  %\IS{Why do we  need this term here? Do we use it at all for general frames?}
\end{definition} 

\begin{proposition}\cite[Section 2.2]{WolterLat1993}\label{prop:restr_is_gf} 
    For a general frame \(G\) and  \(Y \in \Alg G\), the restriction \(G \restr Y\) is a general frame.
\end{proposition}

The following constructions and results are standard: see, e.g., \cite[Section~8.5]{CZ}.\ISLater{We also need  ref to \cite{BRV}}

\begin{definition}
  Let~\(G = (X,(R_\Di)_{\Di\in \O},A)\) be a general frame.
  Let~\(R = \bigcup_{\Di\in \O} R_\Di.\) For $Y\subseteq X$, 
  the restriction~\(G\restr  R^*[Y] \) is called a \emph{generated subframe} and denoted by~\(G\langle Y\rangle\). We write 
  $\gen{G}{a}$, if $Y=\{a\}$. 
  %; in this case,  $G$ is said to be 
  %\emph{rooted} or \emph{point-generated}. 
  %If~\(Y = \{a\}\) for some~\(a\in X\), then we call it a \emph{rooted} or %\emph{point-generated subframe} and denote it by~$\gen{G}{a}$.
  %\(G\langle a\rangle\).
  If~\(G = \gen{G}{a}\) for some~\(a\in X,\) then~\(G\) is \emph{rooted (point-generated)} and~\(a\) is a \emph{root of \(G\)}.
\end{definition}

%If~\(F = (X,(R_\Di)_{\Di\in \O})\) is a Kripke frame, then the subframe~\(F\restr Y\) is a Kripke frame for any~\(Y \subseteq X.\)

\begin{lemma}
For a general frame \(G\)
% and  \(Y \in \Alg G\), 
and \(Y\subseteq \dom G\), 
the generated subframe \(G \restr Y\) is a general frame and
  \(\Log G \subseteq \Log G\langle Y\rangle\).
\end{lemma}

\begin{definition}
  Let \(G = (X,(R_\Di)_{\Di\in \O},A)\) and~\(H = (Y, (S_\Di)_{\Di\in \O}, B)\) be general frames.
  A surjective map~\(f:\:X \to Y\) is a \emph{p-morphism from~\(G\) to~\(H\)} (in notation, \(f:\:G\toto H\)), if the following conditions hold:
  \begin{description}
    \item[(forth)] if~\(a R_\Di b,\) then~\(f(a) S_\Di f(b)\), for all~\(\Di\in \O\);
    \item[(back)] if~\(f(a) S_\Di d,\) then there exists~\(b \in Y\) such that~\(a R_\Di b\) and \(f(b) = d\), for all~\(\Di\in \O\);
    \item[(admissibility)] \(f\inv[U] \in A\) for all~\(U\in B.\)
  \end{description}
\end{definition}
%If~\(G\) is a Kripke frame, then the admissibility condition is trivially satisfied.

\begin{lemma}\label{lem:p-morph}
  If~\(G \toto H,\) then~\(\Log G \subseteq \Log H.\)
\end{lemma}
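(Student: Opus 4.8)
The plan is to establish the stronger pointwise fact that a p-morphism transfers the truth of formulas in models, and then read off validity. Let $f\colon G \toto H$, where $G = (X, (R_\Di)_{\Di\in\O}, A)$ and $H = (Y, (S_\Di)_{\Di\in\O}, B)$, and let $\theta$ be an arbitrary valuation on $\Alg H$. Define a valuation $\theta'$ on $\Alg G$ by $\theta'(p) = f^{-1}[\theta(p)]$; the \textbf{(admissibility)} clause guarantees $f^{-1}[\theta(p)] \in A$, so $\theta'$ really is a valuation on the algebra $\Alg G$ (for Kripke frames this is automatic, as noted after the definition of p-morphism). The key claim is that for every modal formula $\psi$,
\[
  \overline{\theta'}(\psi) \;=\; f^{-1}\!\left[\overline{\theta}(\psi)\right],
\]
equivalently, $(G,\theta'),a \models \psi$ iff $(H,\theta),f(a)\models\psi$ for all $a \in X$.

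I would prove this claim by induction on $\psi$. The atomic case is the definition of $\theta'$, and the Boolean cases are immediate because $f^{-1}[\cdot]$ commutes with union, intersection and complementation (taken in $X$, resp.\ $Y$). The only substantive case is $\psi = \Di\chi$ with $\Di \in \O$: by the induction hypothesis and the definition of the modal operation in a general frame,
\[
  \overline{\theta'}(\Di\chi) \;=\; R_\Di^{-1}\!\left[\overline{\theta'}(\chi)\right] \;=\; R_\Di^{-1}\!\left[f^{-1}\!\left[\overline{\theta}(\chi)\right]\right],
\]
so it suffices to check that $R_\Di^{-1}[f^{-1}[U]] = f^{-1}[S_\Di^{-1}[U]]$ for every $U \subseteq Y$. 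The inclusion $\subseteq$ uses the \textbf{(forth)} condition, and $\supseteq$ uses the \textbf{(back)} condition, which additionally supplies a point $b$ with $f(b)$ equal to the prescribed $S_\Di$-successor.

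Finally, $\Log G \subseteq \Log H$ follows by contraposition. Suppose $\varphi \notin \Log H$, i.e.\ $H \not\models \varphi$; then some valuation $\theta$ on $\Alg H$ satisfies $\overline{\theta}(\varphi) \neq Y$, so pick $d \in Y \setminus \overline{\theta}(\varphi)$ and, using surjectivity of $f$, a point $a \in X$ with $f(a) = d$. The claim gives $a \notin \overline{\theta'}(\varphi)$, hence $\overline{\theta'}(\varphi) \neq X$, i.e.\ $G \not\models \varphi$ and $\varphi \notin \Log G$. There is no real obstacle in this argument; the only point requiring attention is the use of \textbf{(admissibility)} to see that $\theta'$ is a valuation on $\Alg G$ rather than merely on $\Alg(\kripke G)$, which is exactly why that clause is part of the definition of a p-morphism between general frames.
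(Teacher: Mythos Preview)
Your proof is correct and is precisely the standard argument; the paper does not supply its own proof of this lemma but simply records it as a well-known fact with a reference to \cite[Section~8.5]{CZ}, so there is nothing further to compare.
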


\subsection{Product frames and product logics}
\begin{definition}
  Let~\(F = (X,R)\) and~\(G = (Y, S)\) be unimodal Kripke frames.
  The \emph{product frame}~\(F \times  G\) is the bimodal frame~\((X\times Y, R_1,R_2)\), where
  \begin{align*}
    (a,b) R_1 (c,d) &\tiff aRb \tand b = d;
    \\
    (a,b) R_2 (c,d) &\tiff a=c \tand bSd.
  \end{align*}
\end{definition}

\begin{definition}
  The \emph{product of unimodal logics~\(L_1\) and~\(L_2\)} is the bimodal logic
  \[
    \Log\{F\times  G \mid F\tand G\text{ are Kripke frames, }F\models L_1,\,G\models L_2\}.
  \]
  We also denote the product logic~\(L\times  L\) by \(L^2\).
\end{definition}

% \begin{definition}
%     \VS{
%         A function \(t:\:\MF(\O)\to \MF(\O')\) is a \emph{translation}, if
%     }
%   A translation~\(t\) of modal formulas is a \emph{translation of modalities}, if~\(t\) is compatible with Boolean connectives: \(t(\bot) = \bot,\,t(\lnot \varphi) = \lnot  t(\varphi),\) and \(t(\varphi\lor\psi) = t(\varphi) \lor t(\psi)\).
% \end{definition}

% \begin{definition}
%   Let~\(\varphi\) be a unimodal formula.
%   Then~\(\rep{\varphi}{\Di_i}\) is a translation of modalities given by~\(\rep{(\Di \varphi)}{\Di_i} = \Di_i (\rep{\varphi}{\Di_i})\) for~\(i \in \{1,\,2\}.\)
%   For a set~\(\Gamma\) of unimodal formulas, let~\(\rep{\Gamma}{\Di_i} = \{\rep{\varphi}{\Di_i} \mid \varphi\in \Gamma\}.\)
% \end{definition}

\begin{definition}
  Let~\(L_1\) and~\(L_2\) be unimodal logics.
  The \emph{fusion}~\(\fusion{L_1}{L_2}\) is the smallest bimodal logic that contains both~\(\rep{L_1}{\Di_1}\) and~\(\rep{L_2}{\Di_2}\), where 
  $\rep{L_i}{\Di_i}$ is obtained by renaming $\Di$  with $\Di_i$.
  The \emph{commutator}~\(\LCom{L_1}{L_2}\) is the bimodal logic~\(\fusion{L_1}{L_2} + \com+\chr\), where
  \begin{align*}
      \com &= \Di_1 \Di_2 p \lra \Di_2 \Di_1 p;
      \\
      \chr &= \Di_1 \Box_2 p \to \Box_2 \Di_1 p.
  \end{align*}
\end{definition}
A bimodal frame~\((X,R_1,R_2)\) validates~\(\com\) iff it satisfies the \emph{commutativity} condition \(R_1\circ R_2 = R_2 \circ R_1.\)
The validity of~\(\chr\) is equivalent to the \emph{Church-Rosser property} 
\[
  \forall x\forall y\forall z (xR_1y \land xR_2 z \to \exists u\,(yR_2u \land zR_1u)).
\]
\begin{proposition}\label{prop:comm_in_product}\cite[Section~5.1]{ManyDim}
  For any unimodal logics~\(L_1\) and~\(L_2,\) the product logic \(L_1\times L_2\) contains the commutator~\(\LCom{L_1}{L_2}.\)
\end{proposition}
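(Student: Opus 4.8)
The plan is to show the three containments separately: $L_1 \times L_2 \supseteq \fusion{L_1}{L_2}$, $L_1 \times L_2 \ni \com$, and $L_1 \times L_2 \ni \chr$. Since $\LCom{L_1}{L_2}$ is by definition the smallest bimodal logic containing $\fusion{L_1}{L_2} \cup \{\com,\chr\}$, and $L_1 \times L_2$ is itself a (normal) bimodal logic, these three facts together give the proposition. The entire argument is carried out semantically: by definition $L_1 \times L_2 = \Log \clK$ where $\clK$ is the class of all product frames $F \times G$ with $F \models L_1$ and $G \models L_2$, so it suffices to check that every such $F \times G$ validates each of the relevant formulas.

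First I would handle the fusion part. Fix $F = (X,R)$ with $F \models L_1$ and $G = (Y,S)$ with $G \models L_2$, and let $F\times G = (X\times Y, R_1, R_2)$. The projection $\pi_1 : X\times Y \to X$, $(a,b)\mapsto a$, is a p-morphism from the $\Di_1$-reduct $(X\times Y, R_1)$ onto $(X,R)$: surjectivity is clear, (forth) is immediate from the definition of $R_1$, and (back) holds because if $aRc$ then $(a,b)R_1(c,b)$ and $\pi_1(c,b)=c$. By the p-morphism lemma, $(X\times Y, R_1) \models \rep{L_1}{\Di_1}$. Symmetrically, $\pi_2$ witnesses $(X\times Y, R_2)\models \rep{L_2}{\Di_2}$. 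Since validity of a $\Di_i$-only formula in a bimodal frame depends only on the $R_i$-reduct, $F\times G$ validates both $\rep{L_1}{\Di_1}$ and $\rep{L_2}{\Di_2}$, hence validates $\fusion{L_1}{L_2}$.

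Next, the two Segerberg-type axioms. As recorded in the excerpt, $\com$ is valid in a bimodal frame exactly when $R_1\circ R_2 = R_2\circ R_1$, and $\chr$ is valid exactly when the Church–Rosser property holds. So it remains to verify these two first-order conditions directly on an arbitrary product frame. For commutativity: if $(a,b)(R_1\circ R_2)(c,d)$, there is an intermediate point $(c,b)$ with $(a,b)R_1(c,b)$ and $(c,b)R_2(c,d)$, which unwinds to $aRc$ and $bSd$; but then $(a,b)R_2(a,d)$ and $(a,d)R_1(c,d)$, so $(a,b)(R_2\circ R_1)(c,d)$, and the reverse inclusion is symmetric. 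For Church–Rosser: suppose $(a,b)R_1(c,d)$ and $(a,b)R_2(e,g)$; the first gives $aRc$ and $b=d$, the second gives $a=e$ and $bSg$. Take $u=(c,g)$: then $(c,d)=(c,b)\,R_2\,(c,g)$ since $bSg$, and $(e,g)=(a,g)\,R_1\,(c,g)$ since $aRc$ and $g=g$. Hence both conditions hold in every product frame, so $F\times G \models \com$ and $F\times G\models\chr$.

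There is essentially no hard part here; the statement is a routine unwinding of definitions, and the only thing to be careful about is bookkeeping the coordinates in the product relations (e.g.\ remembering that $R_1$ keeps the second coordinate fixed while $R_2$ keeps the first fixed) so that the intermediate points in the $\com$ and $\chr$ verifications are chosen correctly. One could alternatively cite \cite[Section~5.1]{ManyDim} outright, as the statement does, but the self-contained semantic check above is short enough to include.
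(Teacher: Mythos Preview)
Your argument is correct. The paper does not supply its own proof of this proposition at all: it simply quotes the result from \cite[Section~5.1]{ManyDim} and moves on, so there is no proof in the paper to compare against. Your semantic verification via the coordinate projections (for the fusion part) and the direct first-order check of commutativity and Church--Rosser on product frames is the standard route and is carried out accurately.
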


Recall that $\LS4$ denotes the logic of the class of all (finite) preorders, and $\LS{5}$ -- of all (finite) equivalence relations. 
%In this paper we study the extensions of the product logic~\(\LS4^2.\)
Unlike the general case, the commutator %Proposition~\ref{prop:comm_in_product}
gives a complete axiomatization of \(\LS4^2\) and  \(\LS5^2\):
\begin{proposition}\cite[Theorem~7.12]{GabbayShehtman-ProductsPartI}.\label{prop:S5square_product_matching}
\begin{enumerate}
\item \(\LS4^2 = \LCom{\LS4}{\LS4}\).
\item
$\LS5^2 = \LCom{\LS5}{\LS5} =\fusion{\LS5}{\LS5} + \com$.
\end{enumerate}

\end{proposition}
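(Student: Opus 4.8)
For $L_1,L_2\in\{\LS4,\LS5\}$ the inclusion $\LCom{L_1}{L_2}\subseteq L_1\times L_2$ is Proposition~\ref{prop:comm_in_product}, so in each item it remains to prove $L_1\times L_2\subseteq\LCom{L_1}{L_2}$. For the extra identity in~(2), I would first observe that on any bimodal frame $(W,S_1,S_2)$ with $S_1,S_2$ symmetric, commutativity $S_1\circ S_2=S_2\circ S_1$ forces the Church--Rosser property: if $xS_1y$ and $xS_2z$, then $yS_1x$ and $xS_2z$, so $y\,(S_1\circ S_2)\,z$, hence $y\,(S_2\circ S_1)\,z$ by commutativity, which supplies $u$ with $yS_2u$ and $zS_1u$. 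Thus $\chr$ is valid on every frame of $\fusion{\LS5}{\LS5}+\com$; since that logic is canonical, hence Kripke complete, $\chr\in\fusion{\LS5}{\LS5}+\com$, and therefore $\fusion{\LS5}{\LS5}+\com=\LCom{\LS5}{\LS5}$.

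The remaining inclusions I would prove by contraposition. Suppose $\varphi\notin\LCom{L_1}{L_2}$. As transitivity, reflexivity, $\com$ and $\chr$ are Sahlqvist, $\LCom{L_1}{L_2}$ is canonical, hence Kripke complete, so there is a \emph{rooted} Kripke frame $H=(W,S_1,S_2)\models\LCom{L_1}{L_2}$ refuting $\varphi$ at some point. Then $S_1,S_2$ are preorders (equivalences in the $\LS5$ case), $S_1\circ S_2=S_2\circ S_1$, $H$ has the Church--Rosser property, and the root reaches every point via $(S_1\cup S_2)^*$; in the $\LS5$ case this strengthens to $S_1\circ S_2=\nabla_W$. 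The goal is to build unimodal frames $F=(X,R)\models L_1$ and $G=(Y,S)\models L_2$ together with a p-morphism $f:F\times G\toto H$: then $\Log(F\times G)\subseteq\Log H$, so $\varphi$ is refuted on the product frame $F\times G$, witnessing $\varphi\notin L_1\times L_2$.

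The heart of the argument is the construction of $F$, $G$ and $f$, i.e.\ presenting $H$ as a p-morphic image of a ``grid''. The $\LS5^2$ case is the more transparent one: one takes $F$ and $G$ to be universal frames over (copies of) the sets of $S_1$- and $S_2$-classes of $W$, so that rows of $F\times G$ map into a single $S_1$-class and columns into a single $S_2$-class, and defines $f$ by a rectangle-completion --- every $S_1$-class meets every $S_2$-class because $S_1\circ S_2=\nabla_W$ --- after which (forth), (back) and surjectivity are checked, the (back) clauses being the delicate point and the reason one allows several copies of the classes. The $\LS4^2$ case is harder because each coordinate now carries a genuine preorder: $F$ and $G$ are obtained, roughly, by unravelling the partial-order skeleton of $H$ and reinstating clusters, and $f$ is built by an inductive construction that at every stage closes partially filled rectangles using Church--Rosser and commutativity, all the while keeping every row an $L_1$-frame and every column an $L_2$-frame.

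The main obstacle is precisely this last construction for $\LS4^2$: the two (back) conditions must be met simultaneously, so each $S_i$-successor demanded in $H$ has to be realized by a move along the $i$-th coordinate of the product, and this must be arranged without spoiling reflexivity and transitivity along either coordinate. A secondary point is the Kripke completeness of $\LCom{L_1}{L_2}$ invoked at the start, which is what lets one replace ``$\varphi\notin\LCom{L_1}{L_2}$'' by a concrete refuting frame; it follows from canonicity of the defining axioms.
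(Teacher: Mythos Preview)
The paper does not prove this proposition at all: it is quoted verbatim as \cite[Theorem~7.12]{GabbayShehtman-ProductsPartI} and used as a black box, so there is no ``paper's own proof'' to compare against. What you have written is, in outline, precisely the Gabbay--Shehtman argument the citation points to: one inclusion is Proposition~\ref{prop:comm_in_product}, the extra identity in~(2) follows from the observation that symmetry plus commutativity yields Church--Rosser (your computation is correct), and the converse inclusions come from Kripke completeness of the commutator (Sahlqvist, hence canonical) together with a p-morphism from a suitable product onto an arbitrary rooted commutator frame.

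Your sketch is honest about where the real content lies. For $\LS5^2$ the construction is essentially what you say, and your remark that one needs ``several copies'' is to the point: taking $F$ and $G$ to be single copies of the $S_2$- and $S_1$-quotients and sending $(\alpha,\beta)$ to a chosen element of $\alpha\cap\beta$ satisfies (forth) but not (back), so one inflates the factors (e.g.\ to $|W|$-many copies, or simply takes $F=G=(W,\nabla)$ with a more careful definition of $f$). For $\LS4^2$ your description---unravelling the skeleton, reinstating clusters, and closing rectangles step by step using commutativity and Church--Rosser---is accurate as a summary, but this inductive construction is the entire substance of the theorem and cannot really be waved through; the simultaneous satisfaction of both (back) conditions while preserving transitivity in each coordinate is exactly the technical core of \cite[Theorem~7.12]{GabbayShehtman-ProductsPartI}. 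So your proposal is correct as a roadmap and matches the cited source, but in the $\LS4$ case it is a pointer to the proof rather than the proof itself.
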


\subsection{Necessary conditions for local tabularity}

% \ISLater{ Do we need this? I doubt. 
% Recall that a binary relation~\(R\) on a set~\(X\) is a \emph{preorder}, if~\(\Delta_X\subseteq R\) and~\(R^2 \subseteq R.\)  In this case, \(R\cap R\inv\) is an equivalence relation on $X$.
% }
% Its equivalence classes are called \emph{\(R\)-clusters}.\IS{Why?}
\begin{definition}
  Let \(F = (X,(R_\Di)_{\Di\in \O})\) be a Kripke frame, and consider the preorder \(S = \left(\bigcup_{\Di\in \O} R_\Di\right)^*\) on~\(X\).
  % A sequence~\(\{a_i \mid i < \kappa\} \subseteq X\) is an~\emph{ascending \(R\)-chain}, if \(a_i R a_j\) and not \(a_j R a_i\) for all \(i < j\).
  The equivalence classes of~\(S \cap S\inv\) are called the \emph{clusters} of~\(F\).
  % \IS{Why?}
  % If~\(C \subseteq X\) is such, then the subframe~\(F\restr C\) is a \emph{cluster subframe}.
  The \emph{skeleton} of~\(F\) is the frame \((\quot{X}{(S\cap S\inv)},\le),\) where \(\le\) is the partial order on the clusters induced by~\(S\):
  \[
    [a] \le [b] \text{ iff }\exists c\in [a]\, \exists d\in [b]\,(c S d)
  \]
  The \emph{height} of \(F\) is
  \[
    \h(F) = \sup\left\{|Y| \:\Big|\: Y \text{ is a finite chain of clusters w.r.t.}\le\right\}.
  \]
 For $a\in X$, its \emph{depth in~\(F\)}
  % , denoted by~\(\d(a)\),
  is the height of~\(\gen{F}{a}\).  
\end{definition}

\begin{definition}
  Let~\(\bh_n\in\MF(\Di)\) denote the \(n\)th {\em bounded height formula}:
  \[
    \bh_0 = \bot,\quad \bh_{n+1} = p_{n+1} \to \Box(\Di p_{n+1} \lor \bh_n).
  \]
  % where~\(\Di^*\varphi\) stands for~\(\Di \varphi\) in the unimodal 
\end{definition}

If $F=(X,R)$ is a preorder, we have \cite{Seg_Essay}:
%the formula $B_h$ expresses the fact that %the height of a frame $h$ is not greater than $h$ \cite{Seg_Essay}:
\begin{equation}\label{eq:fin-ht-trans}
F\mo \bh_n \text{ iff } \h(F)\leq n.
\end{equation}
%for any transitive $R$.

% \begin{definition}\label{def:compound_diamonds}
%   We introduce the abbreviations in~\(\MF(\Di_1,\Di_2):\)
%   \(\Di \varphi \equiv \Di_1 \varphi \lor \Di_2 \varphi\) and \(\DiM\varphi \equiv \Di_1\Di_2 \varphi\).
% \end{definition}

\begin{definition}\label{def:pretransitive}
    Let \(\O\) be finite and let \(\Div \vf\) abbreviate \(\bigvee_{\Di \in \O} \Di \vf.\)
    %\ISLater{$\Di$ is a diamond-variable. It is confusing if we use it the other way.}
    An \(\O\)-modal logic \(L\) is \emph{\(k\)-transitive}, if it contains a formula \((\Div)^k p \to \bigvee_{j < k} (\Div)^j p\).
    A logic is \emph{pretransitive}, if it is \(k\)-transitive for some~\(k < \omega.\)
    When we consider a pretransitive logic~$L$, we write $\DiM \vf$ for $\bigvee_{j < k} (\Div)^j \vf$, where \(k\) is the least number such that $L$ is $k$-transitive. 
\end{definition}
 
 For a unimodal formula $\vf$, let $\rep{\vf}{\blacklozenge}$
be the formula obtained from $\vf$ by replacing
each occurrence of $\Di$ with $\blacklozenge$. %\VS{Maybe \(\blacklozenge\) for the parameter?} 
\ISLater{It is inaccurate.}

\begin{definition}
  Let \(L\) be a pretransitive \(\O\)-logic. If \(\rep{\bh_n}{\DiM} \not \in L\) for all \(n < \omega\), then we say that the \emph{height} of \(L\) is~\(\omega\). Otherwise, the \emph{height} of~\(L\) is
  \(\inf \{n \mid \rep{\bh_n}{\DiM} \in L\}\).%\ISLater{Clarify if empty}
  % The height of a bimodal logic \(L \supseteq \LS4^2\) is defined likewise, assuming that \(\Di\) is an abbreviation as in~Definition~\ref{def:compound_diamonds}.
  %\IS{Better? - \(\sup \{n \in \omega\mid \bh_n \in L\}\) } \IS{inf, I think}
  % The \emph{height} of a unimodal algebra~\(\A\) or a general frame~\(G\) that validates~\(\LS4\) is the height of their logic.\IS{Do we need it?} 
% \IS{We need a general definition of height for a pretransitive logic}
\end{definition}

\begin{definition}
    For a pretransitive logic \(L\), the logic \(L + \rep{\bh_n}{\DiM}\) is denoted by \(L[n].\)
    % In the bimodal case, \(\Di\) in \(\bh_n\) should be interpreted according to Definition~\ref{def:compound_diamonds}.
\end{definition}
\improve{
\VSLater{Rewrite along the lines of `iff \(L\) has finite height'?}}

%\IS{What is the relation between these defs?}

% \begin{definition}
%   For a bimodal frame~\(F = (X,R_1,R_2)\models \LS4^2,\) let~\(F^* = (X,R_1\circ R_2).\)
%   The \emph{height} and \emph{clusters} of~\(F\) are defined to be those of~\(F^*.\)\IS{Let us keep the old, general, definition}
% \end{definition}

% \begin{definition}
%   We define~\(\bh_n^2\in\MF(\Di_1,\Di_2)\) as~\(t(\bh_n),\) where \(t\) is the translation of modalities: \(t(\Di \varphi) = \DiM \varphi.\) 
% \end{definition}

\ISLater{
\begin{proposition}
  \VSLater{This depends on the definitions of canonicity. Move?}
  If \(L\) is a pretransitive modal logic, then the logic~\(L[n]\) is canonical for any~\(n < \omega.\)
  \IS{Move into proofs}
\end{proposition}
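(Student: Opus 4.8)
\noindent\emph{Proof plan.} The idea is to inspect the canonical Kripke frame of \(L[n]\). We may assume \(n \ge 1\), since \(L[0] = L + \bot\) is inconsistent, its canonical frame is empty, and the claim is then vacuous. Fix the least \(k\) with \(L\) being \(k\)-transitive, so that \(\DiM = \bigvee_{j<k}(\Div)^j\) is the master modality and \(\BoxM\) its dual; since \((\Div)^0 p = p\) and \(L\) is \(k\)-transitive, \(L\) proves \(p \to \DiM p\) and \(\DiM\DiM p \to \DiM p\), so \(\BoxM\) provably satisfies \(\mathrm T\) and \(4\). Write \(M = L[n]\), let \(\mathfrak F = (W, (R_\Di)_{\Di \in \O})\) be the underlying Kripke frame of the canonical frame of \(M\), with canonical valuation \(\theta^c\) and the Truth Lemma, and put \(R = \bigcup_{\Di \in \O} R_\Di\) and \(R_\bullet = R^\ast\), a preorder on \(W\).

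First I would invoke the standard analysis of the master modality on canonical frames of pretransitive logics. The pretransitivity axiom \((\Div)^k p \to \bigvee_{j<k}(\Div)^j p\) is Sahlqvist, hence a canonical formula, so it is valid on \(\mathfrak F\); this gives \(R^\ast = \bigcup_{j<k} R^j\) on \(W\) (so that \(\DiM\) is interpreted in every model on \(\mathfrak F\) exactly as the diamond of \(R_\bullet\)) and, together with \(\DiM\DiM p \to \DiM p \in L\), that \(R_\bullet\) is the canonical relation of \(\BoxM\), i.e.\ \(w R_\bullet v\) iff \(\{\vf : \BoxM \vf \in w\} \subseteq v\). Consequently, for any valuation \(\theta\) on \(\mathfrak F\), any \(w \in W\), and any unimodal \(\chi\), \((\mathfrak F, \theta), w \mo \rep{\chi}{\DiM}\) iff \(((W, R_\bullet), \theta), w \mo \chi\). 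Taking \(\chi = \bh_n\) and applying \eqref{eq:fin-ht-trans}, we get that \(\mathfrak F \mo \rep{\bh_n}{\DiM}\) holds iff the preorder \((W, R_\bullet)\) has height at most \(n\); so it remains to prove \(\h(W, R_\bullet) \le n\).

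Suppose \(\h(W, R_\bullet) > n\). Then, picking representatives of a chain of \(n+1\) clusters, there are \(w_0, \dots, w_n \in W\) with \(w_i R_\bullet w_{i+1}\) and \(\lnot(w_{i+1} R_\bullet w_i)\) for all \(i < n\). Since \(R_\bullet\) is the canonical relation of \(\BoxM\), for each \(i < n\) we may fix \(\sigma_i\) with \(\BoxM \sigma_i \in w_{i+1}\) and \(\sigma_i \notin w_i\); then, by the Truth Lemma, \(\sigma_i\) is true (under \(\theta^c\)) throughout \(R_\bullet(w_{i+1})\) and false at \(w_i\). Now run the Segerberg construction at the level of maximal consistent sets: let \(s\) be the substitution \(p_j \mapsto \lnot \sigma_{n-j}\) for \(j = 1, \dots, n\). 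By induction along \(w_0 R_\bullet \dots R_\bullet w_n\) (the step for \(w_i \leadsto w_{i+1}\) uses \(\BoxM \sigma_i \in w_{i+1}\) to falsify the \(\DiM\)-disjunct at \(w_{i+1}\) and \(\sigma_i \notin w_i\) to keep the antecedent true at \(w_i\)), one checks \((\mathfrak F, \theta^c), w_0 \not\mo (\rep{\bh_n}{\DiM}) s\). But \(\rep{\bh_n}{\DiM} \in M\) and \(M\) is closed under substitution, so \((\rep{\bh_n}{\DiM}) s \in M \subseteq w_0\), whence \((\mathfrak F, \theta^c), w_0 \mo (\rep{\bh_n}{\DiM}) s\) by the Truth Lemma --- a contradiction. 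Hence \(\h(W, R_\bullet) \le n\), so \(\mathfrak F \mo \rep{\bh_n}{\DiM}\).

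Finally, since \(L\) is canonical, \(\mathfrak F \mo L\) as well, so \(\mathfrak F \mo L + \rep{\bh_n}{\DiM} = M\); that is, \(M = L[n]\) is canonical. (Equivalently, the argument above shows that \(\rep{\bh_n}{\DiM}\) is a canonical formula relative to any pretransitive \(L\), so it can be added to the canonical logic \(L\).) The main obstacle I anticipate is the combination of two standard-but-not-quite-routine points: the identification of \(R_\bullet\) with the canonical relation of \(\BoxM\) for pretransitive \(L\), and the bookkeeping in the Segerberg induction above. I would also flag the point already noted in the statement: the last step uses that validity of \(L\) transfers to the canonical frame of the extension \(M\), which is automatic when \(L\) is axiomatized by canonical (in particular Sahlqvist) formulas --- as it is in the intended applications --- but is the reason the hypothesis is most safely read as ``\(L\) is a canonical pretransitive logic''.
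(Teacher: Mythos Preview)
The paper contains no proof of this proposition: the statement sits inside a suppressed editorial block together with the authors' own marginal notes (``This depends on the definitions of canonicity'' and ``Move into proofs''), so it was evidently never finalized and there is nothing to compare your argument against.

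Your argument is correct and is the standard one. The pretransitivity axiom is Sahlqvist, so on the canonical Kripke frame $\mathfrak F$ of $L[n]$ one has $R^*=\bigcup_{j<k}R^j$; the identification of $R^*$ with the canonical relation of $\BoxM$ is the usual existence-lemma computation for compound boxes; and your Segerberg-style refutation with the witnesses $\sigma_i$ then forces $\h(W,R_\bullet)\le n$, whence $\mathfrak F\models\rep{\bh_n}{\DiM}$ by~\eqref{eq:fin-ht-trans}. This establishes that $\rep{\bh_n}{\DiM}$ is a canonical formula over any pretransitive base. The one genuine issue is exactly the one you flag at the end, and it coincides with the authors' own reservation: the proposition as written assumes only that $L$ is pretransitive, but the final step needs $\mathfrak F_{L[n]}\models L$, i.e.\ canonicity of $L$, and that does not follow from pretransitivity alone. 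So the statement is not provable as written; one must either add ``$L$ canonical'' to the hypothesis or weaken the conclusion to ``$\rep{\bh_n}{\DiM}$ is valid on the canonical frame of $L[n]$''. You have proved the latter and correctly diagnosed the gap.
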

}

\begin{theorem}[Segerberg-Maksimova criterion]\cite{Seg_Essay}\cite{Maks1975LT}
    A unimodal logic \(L \supseteq \LS4\) is locally tabular iff \(\bh_n \in L\) for some \(n < \omega.\)
\end{theorem}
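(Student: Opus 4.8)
## Proof Proposal

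The plan is to prove the Segerberg--Maksimova criterion in two directions, the "easy" direction being the necessity of bounded height, and the "hard" direction being its sufficiency for local tabularity.

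\textbf{Necessity.} Suppose $L \supseteq \LS4$ is locally tabular; I want to show $\bh_n \in L$ for some $n$. The contrapositive is cleaner: if $\bh_n \notin L$ for every $n$, then by Kripke-style reasoning (or via the canonicity/finite model considerations available for $\LS4$-logics) $L$ has, for each $n$, an $L$-frame $F_n$ that is a preorder with $\h(F_n) > n$, using the equivalence \eqref{eq:fin-ht-trans}. In particular $L$ has $L$-frames containing arbitrarily long finite chains of clusters. A chain of $n$ clusters supports $n$ pairwise non-equivalent formulas in a single variable (the formulas $\Box^{\le j}\!\bot$-style "depth markers", i.e.\ the subformulas of $\bh_n$ built from one variable, or simply iterated boxes witnessing each level of the chain). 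Hence the $1$-variable fragment of $L$ is infinite, contradicting local tabularity. The only care needed is to make sure the frames $F_n$ can be taken to be genuine $L$-frames (not merely $\LS4$-frames satisfying the consistency of $\neg\bh_n$), which is where one invokes that refutability of $\bh_n$ in $L$ yields, through generated/rooted subframes and the canonical frame of $L$ (a preorder since $L\supseteq\LS4$), a transitive reflexive $L$-frame of height $> n$.

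\textbf{Sufficiency.} Now assume $\bh_n \in L$ for some fixed $n$, i.e.\ every $L$-frame is a preorder of height $\le n$; I must show $L$ is $k$-finite for every $k$. Fix $k$ and a $k$-model $(F,\theta)$ with $F$ a rooted $L$-frame; it suffices to bound, independently of $F$ and $\theta$, the number of distinct values $\overline\theta(\vf)$ for $k$-formulas $\vf$ — equivalently, to bound the size of the $k$-generated subalgebra $\Alg(F,\theta)$, or to produce a finite $L$-model that is $k$-equivalent to $(F,\theta)$. The standard argument is a "filtration through the bounded height" / induction on height: one shows by induction on $m \le n$ that a rooted $L$-model of height $m$ can be replaced, preserving the values of all $k$-formulas (or all $k$-formulas of modal depth up to any prescribed bound — and then one notes $k$-formulas have only finitely many non-equivalent ones once the number of distinguishable points is bounded), by a model on a frame of size at most $g(k,m)$ for some function $g$ not depending on the starting model. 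The base case $m \le 1$ is the $\LS5$-type fact that a cluster (an equivalence class, after the preorder is restricted) carries only boundedly many $\theta$-definable subsets when $\theta$ uses $k$ variables: at most $2^{2^k}$ "atom types" of points, so the quotient cluster has size $\le 2^{2^k}$. For the inductive step on a rooted model of height $m+1$: the root cluster $C_0$ has boundedly many point-types; each type "sees" a union of rooted submodels of height $\le m$ hanging below it, and by the induction hypothesis each of these may be assumed small; one then glues $C_0$ (made small by the base case) on top of finitely many small height-$\le m$ pieces, arranging via a p-morphism that the resulting finite frame is still an $L$-frame and still satisfies the same $k$-formulas. The key technical ingredients are: (i) that being a preorder of height $\le n$ is preserved and that $L$ is the logic of its finite rooted frames of height $\le n$ — i.e.\ $L$ has the finite model property, which for extensions of $\LS4 + \bh_n$ follows from standard filtration; (ii) a uniform bound on the number of "colors" needed below a cluster, which is where one must be slightly careful that the number of height-$\le m$ pieces to glue is itself bounded by a function of $k$ and $m$ only.

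\textbf{Main obstacle.} The delicate point is the inductive gluing step: ensuring that the bound $g(k,m+1)$ depends only on $k$ and $m+1$, and not on the (a priori unbounded) branching of the original frame below the root cluster. This is handled by observing that below the root one only needs to keep one representative rooted submodel for each of the finitely many $k$-theories (equivalently, each value-assignment to the generating algebra) that actually occur, and that the number of such theories is itself bounded by the induction hypothesis; then a p-morphism collapses the genuine frame onto this bounded skeleton, and the admissibility/(back)/(forth) conditions together with the bounded-height axiom guarantee the collapsed frame is still an $L$-frame. Getting this bookkeeping right — and confirming that $L$-validity (not just $\LS4+\bh_n$-validity) transfers along the collapse, using that $L \supseteq \LS4+\bh_n$ and $L$ is closed under p-morphic images of its frames — is the heart of the proof; the Boolean/cluster counting in the base case is routine.
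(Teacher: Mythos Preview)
The paper does not prove this theorem; it is stated with attributions to Segerberg and Maksimova and used as a known result throughout, so there is no proof in the paper to compare your proposal against.

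That said, your outline is the standard one and essentially correct. Two small points. In the necessity direction, ``iterated boxes'' will not separate levels in a transitive reflexive frame (since $\Box^2\vf \leftrightarrow \Box\vf$ in $\LS4$); you need formulas such as $\psi_0 = \top$, $\psi_{k+1} = \Box(p \to \Box\psi_k)$, which do distinguish depths in a chain under a suitable one-variable valuation (compare the paper's Proposition~\ref{prop:LinTGrz_not_1finite}). In the sufficiency direction, your ``main obstacle'' about $L$-validity transferring along the collapse is a non-issue: it suffices to prove that $\LS4 + \bh_n$ itself is locally tabular, since local tabularity is inherited by extensions, and then the inductive p-morphic collapse only needs to preserve being a preorder of height at most $n$, which is automatic.
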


\hide{
    It follows from the proof of this criterion (see \cite[Proposition~2.4]{Maks1975LT}) that in the extensions of \(\LS4\), the finite height is necessary for \(1\)-finiteness.
    This observation generalizes to the multimodal case in the following ways.
}

\begin{proposition}\label{prop:LT_implies_bh}\cite[Theorem~3.7]{LocalTab16AiML}
    If a modal logic~\(L\) is 1-finite, then \(L\) is pretransitive and \(\rep{\bh_n}{\DiM} \in L\) for some~\(n < \omega.\)
\end{proposition}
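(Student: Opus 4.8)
Throughout, let $\A$ denote the free one-generated $L$-algebra, with free generator $g$; recall that a one-variable formula lies in $L$ iff it evaluates to $\top$ in $\A$ under $p\mapsto g$, and that $L$ is $1$-finite iff $\A$ is finite. The plan is to prove the contrapositive, first establishing pretransitivity and then, granting it, bounded $\DiM$-height; in each case the failure of the conclusion will produce infinitely many pairwise $L$-inequivalent one-variable formulas.

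\textbf{Pretransitivity.} Consider in $\A$ the elements $a_m=\bigvee_{j\le m}(\Div)^{j}g$, for $m<\omega$. Since each $\Di\in\O$ — and hence $\Div$ — is monotone and additive, $(a_m)$ is increasing, and once $(\Div)^{m+1}g\le a_m$ holds, applying $\Div$ and using additivity gives $(\Div)^{m+2}g\le a_{m+1}$, so the chain becomes constant from $a_m$ on. By Definition~\ref{def:pretransitive}, $L$ fails to be $(m+1)$-transitive exactly when $a_{m+1}\ne a_m$; so if $L$ is not pretransitive, $(a_m)$ is strictly increasing, $\A$ is infinite, and $L$ is not $1$-finite. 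So assume $L$ is pretransitive, fix the least $k$ with $L$ $k$-transitive, and let $\DiM,\BoxM$ be the master modalities. From $k$-transitivity one checks $(\Div)^{m}p\to\DiM p\in L$ for all $m$, whence $\DiM\DiM p\to\DiM p\in L$, and $p\to\DiM p\in L$ since $(\Div)^{0}p$ is a disjunct of $\DiM p$; thus $\BoxM$ is an $\LS4$-box in $L$.

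\textbf{Bounded $\DiM$-height.} Suppose $\rep{\bh_n}{\DiM}\notin L$ for every $n$. I would pass to the unimodal companion of $\BoxM$: since $\BoxM$ is an $\LS4$-box in $L$, the subalgebra of $\A$ generated by $g$ using only $\BoxM$ and the Boolean operations is (isomorphic to) the free one-generated algebra of the variety generated by the $\BoxM$-reducts of $L$-algebras; let $M\supseteq\LS4$ be the logic of that variety. This free algebra is a subalgebra of $\A$, so $1$-finiteness of $L$ makes $M$ a $1$-finite extension of $\LS4$; and since $\rep{\bh_n}{\DiM}$ is obtained from $\bh_n$ by the substitution $\Di\mapsto\DiM$, the hypothesis gives $\bh_n\notin M$ for every $n$, i.e.\ $M$ is not locally tabular. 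Now invoke the unimodal fact — essentially Maksimova's, provable by the Rieger--Nishimura construction — that a $1$-finite normal extension of $\LS4$ must contain some $\bh_N$: if not, descriptive $M$-frames carry $\BoxM$-chains of arbitrary finite length, and along such a chain an admissible one-variable valuation that alternates makes the Rieger--Nishimura ``counting'' polynomials $\chi_0,\chi_1,\dots$ pairwise $L$-inequivalent (this is the standard reason $\LS4$, $\LS{4.3}$, and $\Grz$ are not $1$-finite). Hence $M$ is not $1$-finite, so $\A$ is infinite, and $L$ is not $1$-finite. Tracking constants one obtains $\rep{\bh_N}{\DiM}\in L$ with $N$ bounded by a fixed function of $|\A|$.

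\textbf{Main obstacle.} The crux is the unimodal input ``a $1$-finite extension of $\LS4$ has bounded height''. If one cites it, what remains is routine: verifying that $\BoxM$ is an $\LS4$-box, that the $\BoxM$-generated subalgebra of $\A$ is free over the $\BoxM$-reduct variety, and that the substitution $\Di\mapsto\DiM$ transfers validity correctly between $L$-algebras and their $\BoxM$-reducts. If one instead re-proves the unimodal input, the delicate step is admissibility: the alternating one-variable valuation along the $\BoxM$-chain must be admissible — automatic for Kripke-complete logics, but in general requiring that one realize the chain inside a descriptive $M$-frame and check that the sets cutting it into levels are admissible there, which is precisely where the form of $\bh_n$ as an iterated strict implication is used.
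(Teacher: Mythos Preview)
The paper does not prove this proposition at all; it is cited verbatim from \cite[Theorem~3.7]{LocalTab16AiML}, so there is no in-paper proof to compare against. I will therefore just comment on the soundness of your argument.

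Your pretransitivity paragraph is correct. The chain $a_m=\bigvee_{j\le m}(\Div)^{j}g$ in the free one-generated $L$-algebra is increasing, stabilises exactly when $L$ becomes $(m+1)$-transitive, and once stable stays stable by the monotonicity computation you indicate. Non-pretransitivity therefore yields an infinite strictly increasing chain, contradicting $1$-finiteness.

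Your reduction of the height part to the $\BoxM$-fragment is also correct. The $\{\BoxM\}$-subalgebra of~$\A$ generated by $g$ is indeed the free one-generated algebra of the variety generated by the $\BoxM$-reducts of $L$-algebras (freeness is checked exactly as you outline), so $1$-finiteness of $L$ transfers to $1$-finiteness of the unimodal logic $M\supseteq\LS4$, and $\rep{\bh_n}{\DiM}\in L$ iff $\bh_n\in M$.

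The gap is in the last step. Your direct sketch---chains of arbitrary length in \emph{descriptive} $M$-frames, with an ``alternating'' admissible one-variable valuation---is not a proof, and you say so yourself: the alternating valuation need not be admissible, and the chains you obtain from the failure of $\bh_n$ live in the $(n{+}1)$-canonical frame, not the $1$-canonical one. The clean way to close this is the one you allude to but do not carry out: if $\bh_n\notin M$ for all $n$, then by Maksimova's theorem $M\subseteq\Grz.3$, so every finite chain $(m,\le)$ is a Kripke $M$-frame. On $(2m,\le)$ with the valuation $\theta(p)=\{2j\mid j<m\}$ (automatically admissible, since this is a Kripke frame) the one-variable formulas of the kind used in Proposition~\ref{prop:LinTGrz_not_1finite} take $m$ distinct values, so the free one-generated $M$-algebra surjects onto algebras of unbounded finite size and is infinite. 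This bypasses the admissibility obstacle entirely.
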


\hide{
\begin{proposition}\label{prop:bh_in_fragments}
    Let \(L\) be an \(\O\)-modal logic. If \(L\) is \(1\)-finite and \(\rep{\LS4}{\Di} \subseteq L\) for some~\(\Di_i\in \O\), then \(L\) contains \(\rep{\bh_n}{\Di_i}\) for some \(n < \omega\).
\end{proposition}
\begin{proof}
    By \cite[Proposition~2.4]{Maks1975LT}, there exists a family of unimodal \(1\)-formulas \(\{\alpha_k\}_{k < \omega}\) that are pairwise non-equivalent in any extension of~\(\LS4\) of infinite height: let \(\alpha_k\) be \(\Box(p \lor \beta_k)\), where \(\beta_1\) is~\(\bot\) and \(\beta_{k+1}\) is \(\Box(p \to \alpha_k)\).
    It follows that \(\rep{\alpha_k}{\Di_i},\,k < \omega,\) are pairwise non-equivalent in \(L\), if \(\rep{\bh_n}{\Di_i} \not\in L\) for all \(n\).
\end{proof}
}

Let~\(\rp_m\) be the formula
        \[
          p_0\con \Di\left(p_1\con \Di\left(p_2\con \ldots \con \Di  p_{m+1}\right)\ldots \right)\imp 
          \bigvee\limits_{i<j\leq m+1} \Di^i (p_i \con p_j)
          \vee \bigvee\limits_{i< j\leq m}  \Di^i(p_i \con \Di p_{j+1}).
        \]
It is straightforward that this formula corresponds to the  property \(\FORP_m\) of Kripke frames: 
        \[
          \AA x_0,\ldots, x_{m+1} \; \left(x_0Rx_1R\ldots R x_{m+1}\imp \bigvee\limits_{i<j\leq m+1} x_i = x_j
          \vee \bigvee\limits_{i< j\leq m}  x_i R x_{j+1}\right).
        \]

In \cite[Theorem~7.3]{LocalTab16AiML}, it was shown that if a unimodal logic is locally tabular (in fact, two-finite), then its frames satisfy \(\FORP_m\) for some $m$.  
If a polymodal logic $L$ is locally tabular, then the unimodal logic of the class 
%of  
%and $\clF$ is the class of its %frames, then the logic of the class
$\left\{\left(X,\bigcup_{\Di\in\Al} R_\Di\right)\mid (X,(R_\Di)_{\Di\in\Al}\right)\mo  L\}$ is locally tabular as well: this is the $\Div$-fragment of $L$. 
%Recall that for the polymodal case, \(\Div \vf\) abbreviate \(\bigvee_{\Di_i\in \O} \Di_i\vf.\) 
Hence, we have:  

\begin{proposition}\label{prop:LT_implies_rp}
    If a logic \(L\) is locally tabular, then \(\rp_m(\Div) \in L\) for some \(m < \omega.\)
\end{proposition}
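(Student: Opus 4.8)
The plan is to reduce to the unimodal case settled in \cite[Theorem~7.3]{LocalTab16AiML} by passing to the \(\Div\)-fragment of \(L\), and then to transfer the resulting frame condition back to \(L\) using that a locally tabular logic is Kripke complete. The whole argument uses local tabularity twice: once to make the \(\Div\)-fragment locally tabular, and once (via the finite model property) to go from ``valid on every \(L\)-frame'' back to ``theorem of \(L\)''.

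First I would record the completeness step. If \(L\) is locally tabular, then for each \(k\) the free \(k\)-generated \(L\)-algebra is \(k\)-generated, hence finite; a \(k\)-formula lies in \(L\) iff it evaluates to \(\top\) there; and a finite modal algebra is (isomorphic to) the algebra \(\Alg F\) of a finite Kripke frame \(F\), which then validates \(L\). So if \(\rp_m(\Div)\notin L\) for a given \(m\), there is a finite Kripke frame \(F\) with \(F\models L\) and \(F\not\models\rp_m(\Div)\). It therefore suffices to produce \(m<\omega\) such that \(F\models\rp_m(\Div)\) for every Kripke frame \(F\) with \(F\models L\).

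Next, put \(\clK=\{(X,\bigcup_{\Di\in\O}R_\Di)\mid (X,(R_\Di)_{\Di\in\O})\models L\}\) and \(M=\Log\clK\). By the observation immediately preceding the statement, \(M\) is the \(\Div\)-fragment of \(L\) and is locally tabular; in particular \(M\) is \(2\)-finite, so \cite[Theorem~7.3]{LocalTab16AiML} applies and gives \(m<\omega\) such that every Kripke frame validating \(M\) satisfies \(\FORP_m\). Every frame in \(\clK\) validates \(M=\Log\clK\), hence satisfies \(\FORP_m\); since \(\rp_m\) corresponds to \(\FORP_m\), every frame in \(\clK\) validates \(\rp_m\), i.e.\ \(\rp_m\in M\). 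Finally, for any Kripke frame \(F=(X,(R_\Di)_{\Di\in\O})\) with union relation \(R=\bigcup_{\Di\in\O}R_\Di\) and any valuation \(\theta\), a routine induction on the structure of a unimodal formula \(\vf\) shows that the value of \(\vf(\Div)\) in \((F,\theta)\) equals the value of \(\vf\) in \(((X,R),\theta)\): the only nonroutine step is that \(\Div\) read in \(F\) denotes exactly the \(R\)-diamond, because \(aRb\) iff \(aR_\Di b\) for some \(\Di\in\O\). Hence \(F\models\vf(\Div)\) iff \((X,R)\models\vf\). Applying this with \(\vf=\rp_m\) and using \((X,R)\in\clK\) whenever \(F\models L\), every \(L\)-frame validates \(\rp_m(\Div)\); by the completeness observation this gives \(\rp_m(\Div)\in L\).

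I do not expect a serious obstacle: the mathematical content is entirely in \cite[Theorem~7.3]{LocalTab16AiML}, which may be assumed. The two points that need care are (i) the passage from ``valid on every \(L\)-frame'' to ``\(\in L\)'', which is exactly where local tabularity is invoked a second time through the finite model property, and (ii) the translation lemma identifying \(\Div\) over \((R_\Di)_{\Di\in\O}\) with \(\Di\) over \(\bigcup_{\Di\in\O}R_\Di\); both are standard bookkeeping.
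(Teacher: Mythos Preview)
Your proposal is correct and follows essentially the same route as the paper: reduce to the unimodal $\Div$-fragment of $L$, invoke \cite[Theorem~7.3]{LocalTab16AiML}, and pull the conclusion back to $L$. The paper compresses all of this into a single ``Hence'', whereas you spell out the return step via Kripke completeness (through the finite model property of locally tabular logics).
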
 
\ISLater{Explain that instead of 
$\rp_m(\Div)$ one can consider $\rp_m$ (if unimodal logics are pretransitive).}

% \begin{definition}
%   The \emph{height of a bimodal logic \(L \supseteq \LS4^2\)} is defined as \(\h(L) = \inf \{n \mid \bh_n^2 \in L\}\).
%   The \emph{height} of bimodal algebras and general frames is defined likewise.
%   \IS{Simplify the system of definitions}
% \end{definition}

% \begin{proposition}
%   For any Kripke frame~\(F,\) \(\h(F) = \h(\Log F).\)
% \end{proposition}

\section{Finitely generated  canonical frames of pretransitive logics} 

Recall that the alphabet \(\O\) is assumed to be finite.
\begin{definition}
  Let~\(L\) be an \(\O\)-modal logic and let~\(\kappa\le \omega\). We define the \emph{\(\kappa\)-canonical frame} for~\(L\) to be~\(F_{L,\kappa} = (X,(R_\Di)_{\Di\in\Al})\), where \(X\) is the set of all maximal \(L\)-consistent sets of \(\O\)-modal formulas in variables \(\{p_i \mid i < \kappa\}\) and \(R_\Di\) is the canonical relation
%  \[    a R_\Di b \iff \forall \vf \in \ML(\O)\,\left(\vf \in b \implies \Di \vf \in a\right),
  %\]
  \[    a R_\Di b \iff  
  \{\Di \psi\mid \psi \in b\}\subseteq a
  \]
  The \emph{canonical valuation}~\(\theta\) on~\(F_{L,\kappa}\) is given by~\(p_i \mapsto \{a\in X \mid p_i \in a\}\) for~\(i < \kappa.\)
  The \emph{\(\kappa\)-canonical general frame} for~\(L\) is \((F_{L,\kappa},A),\) where \(A\) is the set of all valuations~\(\overline{\theta}(\vf)\) of \(\O\)-modal formulas \(\vf\) in variables~\(\{p_i \mid i < \kappa\}\).%\IS{$\theta$- typo? $\ML(\O)$ - inaccuracy?}\VS{Fixed}
  % The~\(\omega\)-canonical general frame is also called just \emph{the canonical frame} and denoted \(G_L.\)\IS{Is it used?}\VS{Yes, Theorem~\ref{thm:Grz3Udd}}
\end{definition}

The construction of the $k$-canonical frames 
is the relational  (J\'onsson-Tarski) representation of 
the finitely generated free algebras  in the variety of~\(L\)-algebras \cite[Section 5.3]{BDV}, \cite[Section 8.2]{CZ}. So we have the following two statements.

\begin{lemma}
    \(L = 
    %\Log(G_L) = 
    \Log \{G_{L,k} \mid k < \omega\} \).
\end{lemma}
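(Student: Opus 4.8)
The plan is to prove the two inclusions $L \subseteq \Log\{G_{L,k} \mid k < \omega\}$ and $\Log\{G_{L,k} \mid k < \omega\} \subseteq L$ separately, exploiting the identification of $G_{L,k}$ with the dual general frame of the $k$-generated free $L$-algebra $\Phi_{L,k}$.

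For the inclusion $L \subseteq \Log\{G_{L,k} \mid k < \omega\}$: fix $\vf \in L$. Since $\Phi_{L,k}$ is an $L$-algebra (being a member of the variety of $L$-algebras), $\vf$ is valid in $\Phi_{L,k}$, i.e. $\vf = \top$ under every valuation in $\Phi_{L,k}$. Validity in a general frame is defined as validity in its algebra (and passing to the Stone dual preserves the algebra up to isomorphism, by the J\'onsson--Tarski representation quoted just before the lemma), so $G_{L,k} \models \vf$. As this holds for each $k < \omega$, we get $\vf \in \Log\{G_{L,k} \mid k < \omega\}$.

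For the reverse inclusion $\Log\{G_{L,k} \mid k < \omega\} \subseteq L$: suppose $\vf \notin L$. Then there is some $L$-algebra $\A$ and a valuation on $\A$ refuting $\vf$, i.e. $\overline\theta(\vf) \ne \top$. Let $k$ be the number of variables occurring in $\vf$; only the values $\theta(p_0),\dots,\theta(p_{k-1})$ matter, so $\vf$ is already refuted in the subalgebra of $\A$ generated by these $k$ elements, which is a $k$-generated $L$-algebra. By the universal property of the free algebra $\Phi_{L,k}$, there is a surjective homomorphism $\Phi_{L,k} \twoheadrightarrow \langle \theta(p_0),\dots,\theta(p_{k-1})\rangle$ sending the generators $[p_i]$ to $\theta(p_i)$; since homomorphic images of modal algebras reflect non-identities along the generators, $\vf$ is refuted in $\Phi_{L,k}$ itself under the canonical valuation. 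Hence $\Phi_{L,k} \not\models \vf$, so $G_{L,k} \not\models \vf$, and therefore $\vf \notin \Log\{G_{L,k} \mid k < \omega\}$. Equivalently, one can argue syntactically: if $\vf \notin L$ then, since $\vf$ is a $k$-formula for some $k$, the set $\{\neg\vf\}$ is $L$-consistent in the language with $k$ variables, so it extends to a maximal $L$-consistent set $a$ of $k$-formulas; by the standard truth lemma for canonical models, $(F_{L,k},\theta),a \models \neg\vf$, and since $\overline\theta(\neg\vf) \in A$ this witnesses $G_{L,k} \not\models \vf$.

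The only genuinely substantive point — and the one I would expect to be the main obstacle if the ambient machinery were not already set up — is the truth lemma for the $k$-canonical model, or equivalently the fact that the J\'onsson--Tarski dual of $\Phi_{L,k}$ is (isomorphic to) the general frame $(F_{L,k}, A)$ with $A = \{\overline\theta(\vf) \mid \vf \text{ a } k\text{-formula}\}$. But this is exactly the content of the sentence preceding the lemma, which asserts that the construction of the $k$-canonical frames \emph{is} the relational representation of the finitely generated free $L$-algebras; citing \cite[Section 5.3]{BDV} and \cite[Section 8.2]{CZ}, the proof reduces to the two short inclusion arguments above. The finiteness/limit aspect is trivial here: a formula has only finitely many variables, so refutation anywhere in the family $\{G_{L,k}\}$ already occurs at a single finite $k$.
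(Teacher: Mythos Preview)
Your proposal is correct and matches the paper's approach: the paper does not give a proof of this lemma at all, but simply states it as an immediate consequence of the identification of $G_{L,k}$ with the J\'onsson--Tarski dual of the $k$-generated free $L$-algebra, citing \cite[Section~5.3]{BDV} and \cite[Section~8.2]{CZ}. Your argument unpacks exactly this identification into the two standard inclusions, so you have supplied the details the paper omits.
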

\begin{lemma}
  \(L\) is \(k\)-finite iff the~\(k\)-canonical frame for~\(L\) is finite.
\end{lemma}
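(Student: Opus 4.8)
The plan is to route everything through the algebra \(\Alg G_{L,k}\), which by the Jónsson–Tarski representation recalled above is (isomorphic to) the free \(k\)-generated algebra for the variety of \(L\)-algebras; denote it \(\A_k\). First I would record two elementary reductions. By the remark following the definition of \(k\)-finiteness, \(L\) is \(k\)-finite iff the variety of \(L\)-algebras is \(k\)-finite; since every \(k\)-generated \(L\)-algebra is a homomorphic image of \(\A_k\) and \(\A_k\) is itself \(k\)-generated, the variety is \(k\)-finite iff \(\A_k\) is finite. (Concretely, the carrier of \(\A_k\) is in bijection with the set of \(k\)-formulas modulo provable equivalence in \(L\) via \(\vf \mapsto \overline{\theta}(\vf)\), so \(|\A_k|\) is exactly the number of pairwise nonequivalent \(k\)-formulas of \(L\).) Thus the lemma reduces to the claim that \(\A_k\) is finite iff \(G_{L,k}\) is finite.

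Next I would prove this equivalence. For the direction ``\(G_{L,k}\) finite \(\Rightarrow\) \(\A_k\) finite'': by definition the carrier \(A\) of \(\A_k = \Alg G_{L,k}\) is a subalgebra of the powerset algebra of \(\dom G_{L,k}\), so if \(\dom G_{L,k}\) is finite, then \(\clP(\dom G_{L,k})\) is finite and hence so is \(A\). For the converse ``\(\A_k\) finite \(\Rightarrow\) \(G_{L,k}\) finite'': it suffices to show that \(\dom G_{L,k}\) is finite, since then \(A \subseteq \clP(\dom G_{L,k})\) is finite and the whole general frame is finite. Now \(\dom G_{L,k}\) is the set of maximal \(L\)-consistent sets of \(k\)-formulas, and the map sending such a set \(a\) to the ultrafilter \(\{\overline{\theta}(\vf)\mid \vf\in a\}\) of \(\A_k\) is injective: if \(a\neq b\), pick \(\vf\in a\setminus b\), so \(\neg\vf\in b\) by maximality, and \(\overline{\theta}(\vf)\) lies in the first ultrafilter but not in the second. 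A finite Boolean algebra has only finitely many ultrafilters, so \(\dom G_{L,k}\) is finite.

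I do not expect a genuine obstacle. The only points requiring care are the identification \(\Alg G_{L,k}\cong \A_k\) and the bookkeeping that \(\dom G_{L,k}\) is exactly the ultrafilter space of \(\A_k\); both are part of the standard canonical-frame / Jónsson–Tarski package already invoked in the text, and the rest is the observation that a Boolean algebra is finite iff its Stone dual is finite, together with the fact that the modal operations on \(\A_k\) are determined by its Boolean reduct and \(\dom G_{L,k}\). If a fully self-contained argument is preferred, one can instead argue directly on the Lindenbaum–Tarski algebra of \(k\)-formulas and cite only Stone's representation theorem.
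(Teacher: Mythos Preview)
Your argument is correct and is exactly the standard route the paper has in mind: the paper does not give a proof at all but simply remarks that the \(k\)-canonical general frame is the J\'onsson--Tarski representation of the free \(k\)-generated \(L\)-algebra, from which both lemmas follow. Your proposal spells out precisely this reduction (via \(\Alg G_{L,k}\cong \A_k\) and finiteness of a Boolean algebra versus finiteness of its ultrafilter space), so it matches the paper's intended approach.
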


\ISLater{ Perhaps, we do not need it: 
\begin{definition}
  A modal formula~\(\vf\subseteq \MF(\O)\) is \emph{canonical}, if for any \(\O\)-modal logic~\(L\) such that~\(\vf\in L,\) the~\(\omega\)-canonical Kripke frame for~\(L\) validates~\(\vf\).
  An \(\O\)-modal logic~\(L\) is \emph{canonical}, if \(L\) is valid in its \(\omega\)-canonical Kripke frame.
\end{definition}

\begin{lemma}
  If~\(\Gamma \subseteq \MF(\O)\) is canonical, then~\(\kripke G_{L,k} \models \Gamma\) for any~\(L \supseteq \Gamma\) and any~\(k < \omega\).
\end{lemma}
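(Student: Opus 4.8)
The plan is to transfer validity from the full (\(\omega\)-variable) canonical frame, where canonicity supplies it directly, down to the finite-variable canonical frames by means of a truth-preserving map. First, since each \(\gamma\in\Gamma\) is canonical and \(\gamma\in L\) whenever \(L\supseteq\Gamma\), the definition of canonicity gives \(\kripke G_{L,\omega}\models\gamma\) for every \(\gamma\in\Gamma\); that is, \(\Gamma\subseteq\Log\kripke G_{L,\omega}\). It therefore suffices to establish the single inclusion
\[
  \Log \kripke G_{L,\omega}\subseteq\Log\kripke G_{L,k}\qquad(k<\omega),
\]
for then \(\Gamma\subseteq\Log\kripke G_{L,\omega}\subseteq\Log\kripke G_{L,k}\), which is exactly \(\kripke G_{L,k}\models\Gamma\).

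To obtain this inclusion I would exhibit a p-morphism \(r\colon \kripke G_{L,\omega}\toto\kripke G_{L,k}\) and invoke the lemma that \(G\toto H\) implies \(\Log G\subseteq\Log H\); as both frames here are Kripke frames, the admissibility clause is automatic. The natural candidate for \(r\) is the restriction-to-\(k\)-formulas map
\[
  r(a)=\{\vf\in a\mid \vf\text{ is a }k\text{-formula}\},
\]
sending a maximal \(L\)-consistent set \(a\) of \(\O\)-formulas to its trace on the \(k\)-formulas. One checks routinely that \(r(a)\) is again a maximal \(L\)-consistent set of \(k\)-formulas, hence a point of \(\kripke G_{L,k}\), and that \(r\) is surjective: any such set is \(L\)-consistent, so by Lindenbaum it extends to a point of \(\kripke G_{L,\omega}\) whose trace it is. The (forth) condition is immediate from the canonical relation: if \(aR_\Di b\) and \(\psi\in r(b)\subseteq b\), then \(\Di\psi\in a\), and \(\Di\psi\) is itself a \(k\)-formula, so \(\Di\psi\in r(a)\).

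The hard part will be the (back) condition, i.e. the existence lemma adapted to traces: given \(r(a)R_\Di d\) in \(\kripke G_{L,k}\), one must produce a point \(b\) of \(\kripke G_{L,\omega}\) with \(aR_\Di b\) and \(r(b)=d\). Writing \(\Box_\Di=\neg\Di\neg\), the key claim is that \(d\cup\{\chi\mid\Box_\Di\chi\in a\}\) is \(L\)-consistent; otherwise, gathering a finite refutable subset into a single \(\psi\in d\) and a single \(\chi\) with \(\Box_\Di\chi\in a\) would give \(\vdash_L\psi\to\neg\chi\), hence \(\vdash_L\Di\psi\to\Di\neg\chi\) by monotonicity, but then \(\Di\psi\in a\) (since \(\Di\psi\in r(a)\) by \(r(a)R_\Di d\)) forces \(\Di\neg\chi\in a\), whereas \(\Box_\Di\chi\in a\) gives \(\Di\neg\chi\notin a\) — a contradiction. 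Extending this consistent set to a point \(b\) by Lindenbaum yields \(aR_\Di b\), because \(b\supseteq\{\chi\mid\Box_\Di\chi\in a\}\), and \(r(b)=d\), because \(d\subseteq b\) while both are maximal among \(k\)-formulas. This existence step is the only genuinely technical point; everything else is bookkeeping about traces of maximal consistent sets together with the two preservation lemmas already available.
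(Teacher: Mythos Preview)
Your argument is correct and follows the standard route: canonicity supplies \(\kripke G_{L,\omega}\models\Gamma\), and the restriction map \(r(a)=\{\vf\in a\mid \vf\text{ is a }k\text{-formula}\}\) is a surjective p-morphism \(\kripke G_{L,\omega}\toto\kripke G_{L,k}\), so validity transfers by the p-morphism lemma. Your handling of the back condition via the consistency of \(d\cup\{\chi\mid \Box_\Di\chi\in a\}\) is exactly the usual existence-lemma argument and goes through as written; the only detail worth making explicit is that the finite conjunction \(\psi\) of elements of \(d\) is again a \(k\)-formula, so that \(\Di\psi\in r(a)\) indeed makes sense --- and you do use this.

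There is, however, nothing to compare against: in the paper this lemma occurs only inside a block of material the authors have flagged for possible removal, and even there it is stated without proof. It is evidently regarded as folklore; your write-up supplies precisely the expected argument.
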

}

% \newpage
%\IS{The text below follows a bit different style (algebra of model) -  I took it from some my drafts. It  should be tailored to the current style.}

The  {\em modal depth $\md{\vf}$ of a formula $\vf$} is the maximal number of nested modalities occurring in~$\vf$. 

\begin{lemma}\label{lem:block-system}
% Assume that $\O$ is finite. 
Fix $k< \omega$ and a $k$-model
$M=(X,(R_\Di)_\O,\v)$.
Let $\A$ be the algebra of $M$.
For $i< \omega$, let $\sim_i$ be the equivalence induced in $M$ by all formulas of modal depth $< i$ over $\{p_i\}_{i<k}$; in particular, $\sim_0$ is  $X\times X$; 
let $\clV_i$ be the corresponding quotient set, and $\clV=\bigcup_{i<\omega} \clV_i$.
Then we have:
\begin{enumerate}
  \item \label{prop:basic-tree:sim-n-fin}  Each $\clV_i$ is finite. 
  \item The poset $(\clV,\supseteq)$ is a tree of height $\leq \omega$. 
  \item \label{prop:basic-tree:branch} The tree $(\clV,\supseteq)$ is of finite branching. 
  \item \label{block-chain} $\A$ is infinite iff $\clV$ contains an infinite $\supset$-chain.
\end{enumerate}
\end{lemma}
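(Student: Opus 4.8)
The plan is to work entirely with the quotient sets $\clV_i$ and their "refinement tree" structure, treating modal depth as the stratifying parameter. Recall $\sim_i$ identifies points that satisfy exactly the same $k$-formulas of modal depth $<i$. Since $\sim_{i+1}$ refines $\sim_i$, each class in $\clV_{i+1}$ is contained in a unique class of $\clV_i$, so the disjoint union $\clV=\bigcup_i\clV_i$, ordered by reverse inclusion $\supseteq$, is a forest with the single root $X\in\clV_0$; I will first record that it is in fact a tree of height at most $\omega$ (there is exactly one level per $i<\omega$). The key point connecting this tree to the algebra $\A$ of the model is that $\A$ is generated by the $\theta(p_i)$, hence every element of $\A$ is $\overline\theta(\vf)$ for some $k$-formula $\vf$; and $\overline\theta(\vf)$ depends only on the $\sim_{\md\vf+1}$-class data, so the number of distinct values of $\A$ realized by formulas of depth $\le i$ is controlled by $|\clV_{i+1}|$ — and conversely $\clV_{i}$ is determined by those finitely many sets.

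**Proving (1)–(3): finiteness and finite branching.**
For part (1), I would argue by induction on $i$. The base case $\clV_0=\{X\}$ is immediate. For the inductive step, note that a $\sim_{i+1}$-class inside a fixed $\sim_i$-class $C$ is determined by, for each $\Di\in\O$ and each $\sim_i$-class $D$, whether points of the sub-class can $R_\Di$-see $D$ — more precisely, a depth-$(i{+}1)$ formula is a Boolean combination of the $p_j$ and of $\Di\psi$ with $\md\psi\le i$, and $\overline\theta(\psi)$ for such $\psi$ is a union of $\sim_i$-classes (this "definable = union of classes at the previous level" fact is the routine lemma I would state and use). Since $\O$ is finite and $\clV_i$ is finite by the inductive hypothesis, there are only finitely many possible "$\Di$-reachability profiles" over $\clV_i$, hence finitely many values of each generating-relevant $\Di\psi$, hence finitely many $\sim_{i+1}$-classes; this simultaneously gives (1) and gives (3), finite branching, since the number of $\sim_{i+1}$-children of a fixed class $C$ is bounded by this same finite count. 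Part (2) is the bookkeeping observation above: levels are linearly ordered by $i$, each non-root node has a unique parent, so $(\clV,\supseteq)$ is a tree of height $\le\omega$.

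**Proving (4): infinity of $\A$ versus an infinite chain.**
This is the main content and the step I expect to require the most care. For the "$\Leftarrow$" direction: an infinite $\supset$-chain $C_0\supsetneq C_1\supsetneq\cdots$ with $C_i\in\clV_{n_i}$ witnesses that at each $C_i\to C_{i+1}$ some depth-$<n_{i+1}$ formula separates a point inside $C_i$ from another, so the associated values $\overline\theta(\vf)$ in $\A$ keep growing in number — I would extract from the chain an infinite strictly increasing (or at least infinite) family of distinct elements of $\A$, hence $|\A|=\infty$. For "$\Rightarrow$": suppose $\clV$ has no infinite $\supset$-chain; since by (3) the tree has finite branching, König's lemma applied to the tree $(\clV,\supseteq)$ — finitely branching, no infinite branch — forces $\clV$ to be \emph{finite}. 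Then there is an $N$ with $\clV_N=\clV_{N+1}=\cdots$ (the refinement stabilizes), so every $\sim_i$ collapses to $\sim_N$ for $i\ge N$; consequently every $\overline\theta(\vf)$, regardless of $\md\vf$, is a union of the finitely many $\sim_N$-classes, so $\A$ is finite. The one subtlety to handle carefully is the interface between "the tree is finite" and "the refinement stabilizes at level $N$": I must note that if $\clV_i=\clV_{i+1}$ as partitions for one $i$ it need not immediately follow for all larger $i$ — instead finiteness of $\clV$ directly yields a maximal level beyond which no new nodes appear, which is exactly stabilization. Modulo that, the equivalence in (4) follows, and with it the lemma.
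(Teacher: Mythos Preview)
Your approach matches the paper's: induction on $i$ for (1), and K\"onig's lemma for (4). Two small refinements worth noting: for (3) the paper introduces the depth-index $d(V)=\min\{i:V\in\clV_i\}$ and shows that all immediate successors of a node $V$ lie in the single level $\clV_l$ at which $V$ first splits, which cleanly handles the possibility (glossed over in your phrasing ``$\sim_{i+1}$-children'') that a class in $\clV_i$ may also lie in $\clV_{i+1}$ and hence have its immediate tree-successors at a later level; and for the ``if'' direction of (4) the paper simply observes that every member of $\clV$ already belongs to $A$, so an infinite strict $\supset$-chain in $\clV$ is itself an infinite subset of $A$ --- no separate extraction argument is needed.
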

\ISLater{Remark:   $\clV$ is a {\em block system} in the sense of \cite[Chapter 6]{harzheim2005ordered}. Just fyi. We do not use it.}
\begin{proof}
The first is straightforward by induction on $n$; the second is trivial.

For $V\in \clV$, the {\em depth-index $d(V)$ of $V$} is the least $i$ s.t. $V\in \clV_i$. 

\eqref{prop:basic-tree:branch}. Let $V\in\clV$. %, $d(V)=d$.
%\VS{We don't use \(d\) in the following}
Consider the set $\clU$ of immediate successors of $V$ in the tree. 
Then  $d(W_1)=d(W_2)$ for each $W_1,W_2\in \clU$ 
%Assume that $\clU$ is non-empty and pick $U\in \clU$. 
%Put $l=d(U)$. 
(indeed, if $d(W_1)< d(W_2)$ for some subsets $W_1,W_2$ of $V$, 
then $W_2$ is not an immediate successor of $V$). 
%Assume that $V\supset W$ for some $W\in\clV$. Then  $V\supset W'\supseteq W$ for some $W'\in \clV_l$.
%\VS{details?}  
It follows that for some $d$ we have $\clU\subseteq \clV_d$, which is finite by \eqref{prop:basic-tree:sim-n-fin}.

The `if' for \eqref{block-chain} is trivial, since all members of $\clV$ belong to~$\A$.
`Only if' for \eqref{block-chain} follows from K\"{o}nig's lemma.
\end{proof}

It is known that if a pretransitive logic has the finite model property, then
its finitely-generated free algebras are atomic \cite{Wolter1997-atoms}.\ISLater{
In terms of 
$k$-canonical frames, it means that every point is definable by a $k$-formula.\IS{No, not like this.} }
The following proposition is similar, but does not require the finite model property: 
the points generating finite subframes in the $k$-canonical frame of a pretransitive logic $L$   correspond to atoms in the free algebra of $L$.
\ISLater{Mention others? see refs in Wolter's paper} 
\begin{proposition}\label{prop:definable-point}
Let $F$ be the $k$-generated canonical frame of a pretransitive
% \VS{define or weaken}
logic~$L$, $k<\omega$. 
Assume that for $r$ in $F$, $\gen{F}{r}$ is finite. Then there is a  formula 
%\VS{define}:  $k$-formula
$\beta(r)$ such that  
for every $a$ in $F$, 
\begin{equation}
    \beta(r) \in a \text{ iff } r=a
\end{equation}  
\end{proposition}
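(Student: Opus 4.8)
The plan is to write down, from the finite rooted subframe $\gen{F}{r}$, a Jankov--Fine style ``description'' formula $\beta(r)$, and then combine its effect with the fact that distinct points of $F$ --- being distinct maximal $L$-consistent sets --- are separated by some $k$-formula. Put $R=\bigcup_{\Di\in\O}R_\Di$ and let $W=R^*[\{r\}]=\{b_0,\dots,b_{n-1}\}$ be the domain of $\gen{F}{r}$ (finite by hypothesis), with $b_0=r$. First I would separate the points of $W$: for $i\neq j$ there is a $k$-formula in $b_i\setminus b_j$, so by conjoining such formulas over all $j\neq i$, then the propositional type of $b_i$ over $\{p_\ell\mid\ell<k\}$, and finally the negations of the formulas used for the other indices, one obtains $k$-formulas $\gamma_0,\dots,\gamma_{n-1}$ that are pairwise $L$-inconsistent, satisfy $\gamma_i\in b_i$, and decide every variable the way $b_i$ does.

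Next I would set
\[
  \beta(r)=\gamma_0\wedge\BoxM\Bigl(\bigvee_{i<n}\gamma_i\Bigr)\wedge\bigwedge_{i<n}\bigwedge_{\Di\in\O}\Bigl(\BoxM\bigl(\gamma_i\to\bigwedge_{b_iR_\Di b_j}\Di\gamma_j\bigr)\wedge\BoxM\bigl(\gamma_i\to\Box_\Di\bigvee_{b_iR_\Di b_j}\gamma_j\bigr)\Bigr),
\]
with empty conjunctions read as $\top$ and empty disjunctions as $\bot$. Here $\BoxM$ is the master modality of the pretransitive logic $L$; the relevant fact is that in the $k$-canonical frame its relation is exactly $R^*$, so that $\BoxM\chi\in a$ iff $\chi\in b$ for every $b$ with $aR^*b$. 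Using this, the definition of the relations $R_\Di$, and $R^*[\{r\}]=W$, one checks conjunct by conjunct that $\beta(r)\in r$ --- the easy implication ``$a=r\Rightarrow\beta(r)\in a$''.

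For the converse, assume $\beta(r)\in a$ and put $W_a=R^*[\{a\}]$. By the second conjunct of $\beta(r)$, every $c\in W_a$ contains exactly one $\gamma_i$ (the disjunction is present, and the $\gamma_i$ are pairwise inconsistent), which gives a map $f\colon W_a\to W$ with $f(c)=b_i\iff\gamma_i\in c$ and $f(a)=b_0=r$ (since $\gamma_0\in a$). The two families of transition conjuncts, together with the canonical existence lemma ($\Di\psi\in c$ yields some $c'$ with $cR_\Di c'$ and $\psi\in c'$) and the dual fact ($\Box_\Di\psi\in c$ and $cR_\Di c'$ imply $\psi\in c'$), yield exactly the \textbf{(forth)} and \textbf{(back)} clauses for $f$ viewed as a map between the underlying Kripke frames of $\gen{F}{a}$ and $\gen{F}{r}$; surjectivity holds because $f[W_a]$ contains $b_0$ and is closed under $R_\Di$-successors inside $W$, while $\gen{F}{r}$ is rooted at $b_0$. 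Since each $\gamma_i$ also fixes the propositional type, $f$ is in fact a p-morphism of the submodels of the canonical model $(F,\theta)$ generated by $a$ and by $r$, hence truth of all $k$-formulas is preserved along $f$. As truth in a generated submodel agrees with membership in the canonical model, $a$ and $r=f(a)$ contain the same $k$-formulas, so $a=r$.

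The main obstacle, beyond routine bookkeeping, is the master-modality lemma for pretransitive logics: that in the $k$-canonical frame the relation witnessed by $\BoxM$ is exactly $R^*$. This is the single place where pretransitivity of $L$, rather than merely finiteness of $\gen{F}{r}$, is used, and it is what lets $\BoxM$ quantify over the whole of $\gen{F}{a}$ inside the description formula $\beta(r)$. Granting that, the separation of $W$, the verification $\beta(r)\in r$, and the two p-morphism clauses are all straightforward.
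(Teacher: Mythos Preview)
Your proof is correct and follows essentially the same approach as the paper's: both construct separating formulas for the finitely many points of $\gen{F}{r}$ (your $\gamma_i$, the paper's $\alpha(a)$), conjoin them with a Jankov--Fine style description under the master modality, and then argue that any $a$ containing $\beta(r)$ must agree with $r$ on all $k$-formulas. The only cosmetic difference is that you package the final step as an explicit p-morphism of canonical submodels and invoke preservation of truth, whereas the paper unfolds the same argument as a direct induction on formula complexity; the back and forth clauses you verify are exactly the two modal cases of that induction.
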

The proof uses standard technique; we provide it in Appendix.

\hide{
\begin{lemma}\label{lem:commpact_Fin_Gen} Assume that $\O$ is finite,  
$G = (X,(R_\Di)_{\Di\in \O},A)$  a compact \IS{Def or ref} general frame with infinite finitely generated algebra. 
Then $G$ has an infinite point-generated subframe.  
\end{lemma}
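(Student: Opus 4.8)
The plan is to combine the block-system lemma (Lemma~\ref{lem:block-system}) with compactness to pin down a single point whose generated subframe is infinite. First I would fix a finite generating set $a_0,\dots,a_{k-1}$ of $\Alg G$ (which is infinite and $k$-generated by hypothesis) and set $\theta(p_i)=a_i$, obtaining a $k$-model $M=(\kripke G,\theta)$. Since the $a_i$ generate $\Alg G$ and $\Alg G$ is a modal subalgebra of $\Alg \kripke G$, the algebra $\Alg M$ of the model coincides with $\Alg G$, hence is infinite; so Lemma~\ref{lem:block-system} applies to $M$.

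By part~\eqref{block-chain} of Lemma~\ref{lem:block-system}, the tree $\clV$ then contains an infinite strictly descending chain of blocks $B_0\supsetneq B_1\supsetneq\cdots$. Writing $i_m=d(B_m)$ for the depth-index, strict inclusion forces $i_{m+1}>i_m$, so $i_m\to\infty$. The next point is that every block is \emph{admissible}: a $\sim_{i}$-class is an atom of the finite Boolean subalgebra of $\Alg G$ generated by the (finitely many) values of formulas of depth $<i$, all of which lie in $A$; hence $B_m\in A$. The sets $B_m$ are nonempty and nested, so $\{B_m\mid m<\omega\}$ has the finite intersection property, and by compactness of $G$ there is a point $x\in\bigcap_{m}B_m$. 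Since $x\in B_m$, each $B_m$ is precisely the $\sim_{i_m}$-class of $x$, which I denote $B_{i_m}(x)$.

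It remains to show that $\gen{G}{x}$ is infinite, and this is where the real work lies. I would argue by contradiction: suppose $R^*[x]$ is finite, where $R=\bigcup_{\Di\in\O}R_\Di$. Then $\gen{M}{x}$ is a finite pointed model, and I would invoke its \emph{characteristic} (Jankov--Fine) formula $\chi_x$: a formula of modal depth bounded by $|R^*[x]|$ with $x\models\chi_x$ such that, for every model $(N,y)$, $N,y\models\chi_x$ implies $y$ is modally equivalent to $x$. The feature that makes this usable against an arbitrary ambient point $y$ is that $\chi_x$ contains, for each modality, a conjunct of the form $\Box(\ldots)$ forcing \emph{every} successor of $y$ into one of finitely many successor types, together with conjuncts $\Di(\ldots)$ realizing each such type; a routine back-and-forth then produces a bisimulation and hence modal equivalence. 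Setting $D=\md{\chi_x}+1$, any $y$ with $y\sim_D x$ satisfies $\chi_x$ and so is modally equivalent to $x$, i.e. $y\sim_d x$ for all $d$. Thus the $\sim_d$-class of $x$ stabilizes: $B_d(x)=B_D(x)$ for all $d\ge D$. This contradicts the fact that the $B_m=B_{i_m}(x)$ strictly decrease with $i_m\to\infty$. Therefore $R^*[x]$ is infinite, and $\gen{G}{x}$ is the required infinite point-generated subframe.

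The principal obstacle is the stabilization step, that is, converting ``finite generated subframe'' into a uniform depth bound $D$ beyond which no point of $G$ can remain $\sim$-indistinguishable from $x$. The subtlety is that a point $y$ agreeing with $x$ up to large depth need not itself generate a finite or even image-finite subframe, so the Hennessy--Milner theorem does not apply directly; the $\Box$-conjuncts of $\chi_x$ are exactly what tames the possibly infinite branching of $\gen{G}{y}$ and lets finite-depth agreement propagate into a full bisimulation. Everything else -- the admissibility of the blocks and the passage to a limit point -- is a direct application of Lemma~\ref{lem:block-system} together with compactness.
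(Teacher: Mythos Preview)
Your overall strategy---descend along an infinite chain in $\clV$, use compactness to land on a point $x$ in the intersection, then argue by contradiction that $\gen{G}{x}$ is infinite---is exactly the paper's line for the pretransitive case (Proposition~\ref{prop:canonical_infinite_subframe}). The gap is in the stabilization step, and it is fatal in the generality of the lemma as stated.

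Your claim is that a finite pointed model $\gen{M}{x}$ carries a characteristic formula $\chi_x$ of bounded depth such that any $y\models\chi_x$ is fully modally equivalent to $x$. Without pretransitivity this is false: the $\Box$-conjuncts of $\chi_x$ constrain only points within the depth of $\chi_x$ from $y$, and nothing stops points further out from breaking the would-be bisimulation. Concretely, take $X=\{x,z\}\cup\{y_n\mid n\ge 1\}$ with one unary modality, all points reflexive, $y_{n+1}Ry_n$, $y_1Rz$, and put $\theta(p)=X\setminus\{z\}$. Then $\gen{M}{x}=\{x\}$ is a single reflexive $p$-point, and an easy induction gives $y_m\sim_n x$ whenever $m\ge n$, while $y_n\not\sim_{n+1}x$ (they disagree on $\Di^n\lnot p$). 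So the $\sim$-blocks of $x$ strictly decrease forever, and no formula of any fixed depth isolates the modal type of $x$. Worse, the algebra $A$ generated by $\theta(p)$ is the finite--cofinite algebra on $X$ (the cofinite sets being exactly those containing $x$); every ultrafilter on $A$ is principal, so $(X,R,A)$ is compact---indeed descriptive---with an infinite $1$-generated algebra, yet every point-generated subframe is finite. Hence the lemma, read without a pretransitivity hypothesis, is false. The paper does not actually prove this lemma (its proof is left ``under construction'' and the statement is suppressed); it proves instead Proposition~\ref{prop:canonical_infinite_subframe}, where pretransitivity supplies a genuine $\BoxM$ and Proposition~\ref{prop:definable-point} produces a formula $\beta(x)$ defining the singleton $\{x\}$ in the canonical frame. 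That global definability---not a bounded-depth Jankov--Fine description---is what forces the chain to collapse.
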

\IS{compact: def or ref; \\
infinite point-generated subframe: cardinality of carrier
}
\begin{proof}
Assume that the algebra of $G$ is generated by $k$ sets $\{P_i\}_{i<k}$. 
Put
$\theta(p_i)=P_i$ and let $M$ be the corresponding model on $G$. 
Consider the set $\clV$ descirbed in Lemma~\ref{lem:block-system}. Then $\clV$ contains an infinite $\supset$-chain $\Sigma$.  \IS{Under construction....}
\end{proof}
}%hide

\begin{proposition}\label{prop:canonical_infinite_subframe}
  Let~\(G\) be the~\(k\)-canonical general frame of a pretransitive logic, \(k < \omega\).
  If~\(G\) is infinite, then there exists~\(r\in \dom G\) such that $\gen{G}{r}$ is infinite.
 % \IS{and (\(\gen{G}{r}\) has an infinite algebra}.
\end{proposition}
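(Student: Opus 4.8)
The plan is to use the block-system structure from Lemma~\ref{lem:block-system} together with the atom/definable-point correspondence of Proposition~\ref{prop:definable-point}, proving the contrapositive: if every rooted subframe $\gen{G}{r}$ is \emph{finite}, then $G$ itself is finite. So assume $G = (X,(R_\Di)_{\Di\in\O},A)$ is the $k$-canonical general frame of a pretransitive logic $L$ and that $\gen{G}{r}$ is finite for every $r\in X$. Let $M=(X,(R_\Di)_\O,\theta)$ be the $k$-model given by $\kripke G$ with the canonical valuation; then $\A=\Alg(M)$ is (isomorphic to) the $k$-generated free $L$-algebra, and by the lemma about $k$-canonical general frames, $G$ is finite iff $\A$ is finite. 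By Lemma~\ref{lem:block-system}\eqref{block-chain}, $\A$ is infinite iff the tree $(\clV,\supseteq)$ of depth-refinements contains an infinite $\supset$-chain; so it suffices to show that under our assumption there is no such infinite chain.

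First I would extract the needed consequence of Proposition~\ref{prop:definable-point}: since $L$ is pretransitive and every $\gen{G}{r}$ is finite, every point $r\in X$ is \emph{definable}, i.e. there is a $k$-formula $\beta(r)$ with $\beta(r)\in a \iff a=r$ for all $a\in X$. Equivalently, in the model $M$ the singleton $\{r\}$ is the value of the formula $\beta(r)$, so $\{r\}\in A$, i.e. $\{r\}$ is an atom of $\A$ — hence $\A$ is atomic, with atoms exactly the singletons $\{r\}$, $r\in X$. Next I would argue that each equivalence class $V\in\clV_i$ (a block of $\sim_i$) is \emph{definable by a $k$-formula of modal depth $<i$}: indeed $\sim_i$ is the equivalence induced by all $k$-formulas of depth $<i$, and since $\O$ is finite and there are only finitely many such formulas up to equivalence in $M$, each block is a finite Boolean combination of their values, hence lies in $A$ and is the value of some depth-$<i$ formula.

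Now suppose toward a contradiction that $(\clV,\supseteq)$ contains an infinite $\supset$-chain $V_0\supsetneq V_1\supsetneq V_2\supsetneq\cdots$. Since the $\clV_i$ are finite (Lemma~\ref{lem:block-system}\eqref{prop:basic-tree:sim-n-fin}) and each $V_j$ sits in some $\clV_{i_j}$, we may assume $i_0<i_1<i_2<\cdots$. Each $V_j$ is non-empty, so pick $a_j\in V_j$; by finiteness of each level and passing to a subsequence we may assume all the $a_j$ are distinct — actually, more directly: because the inclusions are strict, $V_j\setminus V_{j+1}$ is non-empty, so choosing $a_j\in V_j\setminus V_{j+1}$ gives pairwise distinct points lying in $\bigcap_{l\le j}V_l$ but outside $V_{j+1}$. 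The intersection $W=\bigcap_{j<\omega}V_j$ is a non-empty subset of $X$ (each $a_j\notin W$, but... ) — here I need care: $W$ could be empty, which is exactly the situation I must rule out. The key point: each $\{a_j\}$ is an atom of $\A$, i.e. a singleton definable by some $\beta(a_j)$, and $a_j\in V_j$ while $V_j$ is definable of modal depth $<i_j$; but $a_j$ having finite rooted subframe means $\beta(a_j)$ can be taken of bounded modal depth relative to the size of $\gen{G}{a_j}$, and the $\sim_i$-block of $a_j$ stabilizes to $\{a_j\}$ for $i$ large. Concretely: once $i$ exceeds the modal depth of $\beta(a_j)$, the $\sim_i$-block of $a_j$ is contained in $\theta(\beta(a_j))=\{a_j\}$, hence equals $\{a_j\}$. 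Therefore the $\supseteq$-chain through $a_j$ reaches the singleton $\{a_j\}$ at a finite level and cannot descend further — so a single infinite strictly-descending chain cannot exist, since its intersection would have to be a single point $b$, and the chain would already equal $\{b\}$ from some finite level on, contradicting strictness. This is the contradiction, so $\clV$ has no infinite $\supset$-chain, $\A$ is finite, and $G$ is finite.

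The main obstacle, and the step to handle with the most care, is exactly this last one: showing that an infinite strictly descending chain of blocks is impossible given that every point is definable. The subtlety is that definability of $r$ via $\beta(r)$ gives a \emph{finite} modal depth bound for the formula cutting out $\{r\}$, so the $\sim_i$-block of $r$ collapses to $\{r\}$ once $i$ passes that depth; but one must make sure the descending chain is "through a point," i.e. that $\bigcap_j V_j$ is non-empty. Since each $V_j$ is a non-empty member of $A$ and $A$ is a field of sets on $X$ that need not be closed under infinite intersections, I would instead avoid the intersection entirely and argue as above directly: pick $a_j\in V_{j}\setminus V_{j+1}$; these are infinitely many distinct points, and for each of them the chain of $\sim_i$-blocks containing $a_j$ stabilizes to $\{a_j\}$ at a finite level $i=i(a_j)$. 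But $a_j\in V_j$ and $V_j$ properly contains $V_{j+1}\ni$ nothing of... — the cleanest formulation: fix any single point $b$ appearing in cofinitely many $V_j$ (exists by König-type finiteness of levels applied to the tree, or just note the chain is a branch of the finitely-branching tree $(\clV,\supseteq)$ by Lemma~\ref{lem:block-system}\eqref{prop:basic-tree:branch}, so it has a well-defined limit); then the block of $b$ at level $i(b)$ is $\{b\}$, so the branch stabilizes, contradicting that it is strictly descending and infinite. I would write the final argument in this branch-of-the-tree form, invoking finite branching to get a genuine infinite branch and then definability of its limiting point to force stabilization.
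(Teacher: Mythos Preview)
Your overall architecture matches the paper's: use the block tree of Lemma~\ref{lem:block-system}, get an infinite strictly descending chain $\Sigma=(V_0\supsetneq V_1\supsetneq\cdots)$ when the algebra is infinite, and then invoke Proposition~\ref{prop:definable-point} to argue that the chain cannot be strictly descending past the modal depth of the defining formula of a point in $\bigcap\Sigma$. The paper does this directly rather than via the contrapositive, but that is immaterial.

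The genuine gap is precisely the step you flagged but did not resolve: you need a point $r\in\bigcap_j V_j$, and none of your workarounds produces one. Picking $a_j\in V_j\setminus V_{j+1}$ only gives infinitely many distinct points, which does not contradict ``all $\gen{G}{r}$ are finite''. Your K\"onig-style argument is a category error: finite branching of $(\clV,\supseteq)$ lets you extend the chain to a maximal branch \emph{of blocks}, but it says nothing about the existence of a point of $X$ lying in every block of that branch; the ``limit'' of such a branch is $\bigcap_j V_j$, and you have given no reason it is non-empty. Likewise, ``appearing in cofinitely many $V_j$'' is, for a decreasing chain, the same as ``appearing in all $V_j$'', so you are just re-asking for a point in the intersection.

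The missing ingredient is compactness of the $k$-canonical general frame: each $V_j\in A$ is $\{a\in X\mid\varphi_j\in a\}$ for some $k$-formula $\varphi_j$, the family $\{V_j\}$ has the finite intersection property, hence $\{\varphi_j\mid j<\omega\}$ is $L$-consistent and extends to a maximal $L$-consistent set $r\in\bigcap_j V_j$. This is exactly how the paper proceeds (``By compactness, $r\in\bigcap\Sigma$ for some $r$''). Once you have such an $r$, your final paragraph works verbatim: if $\gen{G}{r}$ were finite, then $\beta(r)$ of some depth $n$ would force the $\sim_i$-block of $r$ to be $\{r\}$ for all $i>n$, so $V_j=\{r\}$ for all large $j$, contradicting strictness.
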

\begin{proof}
Consider the algebra of $G$, that is, in fact, the $k$-generated Lindenbaum algebra of $L$.  
Consider the set $\clV$ described in Lemma~\ref{lem:block-system}. Then $\clV$ contains an infinite $\supset$-chain $\Sigma$. By compactness,  $r\in \bigcap\Sigma$ for some $r$. 
We show by contradiction that $\gen{G}{r}$ is infinite. Suppose that it is finite. 
Then by Proposition  \ref{prop:definable-point}, $r$ is defined by a formula $\beta(r)$. Assume that $n$ is the modal depth of $\beta(r)$. %\VS{we don't use this letter, so I would restate it.} 
In this case, all but finitely many elements of $\Sigma$ coincide with the singleton $\{r\}$, which is a contradiction. %\IS{Differentiated frame finite iff its algebra finite. Ref or trivial? Or change the statement of proposition?}
\end{proof}

\ISLater{We know that $\gen{G}{a}$ are of arbitrary size (Maltcev). Need to find the simplest argument why  $\gen{G}{a}$ is infinite for some $a$. SI? Jonsson? Ultraproduct keep rootedness, but I do not see why the result is $k$-generated. (I talked to John, and his idea was to use Jonsson and to try to consider $k$-generated infinite one.)}

% \begin{definition}
%   We denote the set of points of depth~\(n\) in a frame~\(F\) by~\(F[n].\)
% \end{definition}
% \begin{proposition}\label{prop:canonical_bh}\cite[Proposition~8]{Glivenko2021}
%   If~\(G\) is the~\(k\)-canonical general frame of a logic~\(L,\) then~\(G[h]\) is the~\(k\)-canonical general frame of~\(L[h].\)
% \end{proposition}
 
\begin{theorem}\label{thm:big_cluster}
Let $L$ be a pretransitive non-locally tabular logic of finite height. 
Then there exists $k<\omega$ and a cluster $C$ in the $k$-canonical frame $G$ of $L$ such that:
\begin{enumerate}[(a)]
    %\item $\gen{G}{C}{\setminus}C$ is finite.
    \item $C$ is infinite and its complement in $\gen{G}{C}$ is finite. \label{item-big_cluster-1}
    \item There exists a formula \(\tau\) such that for any \(a\) in \(\gen{G}{C}\), \(a\in C\) iff \(\tau\in a\). 
    %There exists a formula \(\tau\) such that \(a\in C\) iff \(\tau\in a\), for any \(a\) in \(\gen{G}{C}.\)  
    In particular, \(C\) belongs to the algebra of~\(\gen{G}{C}\).\label{item-big_cluster-2}
    \item The algebra of $G\restr C$ is infinite and $k$-generated. %\IS{$\leq k$?}
    \label{item-big_cluster-3}
    \item \(\Log(G\restr C)\) is not locally tabular.\label{item-big_cluster-4}

    %\item \VS{looks like these two suffice!}
    %\item \IS{ ``finite stuff'' in the  form, convenient for us}
    %\item \IS{Collect  what we need}
    % \begin{enumerate}
  %    \item \(G\restr C\) is a cluster;
  %    \item \(\gen{G}{C}\) has an infinite algebra;
  %    \item The complement of~\(C\) in~\(\gen{G}{C}\) has a finite algebra.
  %  \end{enumerate}
\end{enumerate}
\end{theorem}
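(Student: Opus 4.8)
The plan is to extract, from the infinite $k$-canonical frame $G$ of $L$, a single cluster that ``carries'' the failure of local tabularity. We start from Proposition~\ref{prop:canonical_infinite_subframe}, which gives a point $r$ with $\gen{G}{r}$ infinite. Since $L$ has finite height, the poset of clusters in $\gen{G}{r}$ is a finite-height partial order, but a priori it may have infinitely many clusters or infinite clusters. The first step is to pass to a minimal-depth ``bad'' configuration: among all points $a$ in all $k$-canonical frames $G_{L,k}$ (with $k$ now allowed to vary, or just within the fixed $G$ after increasing $k$) such that $\gen{G}{a}$ is infinite, choose one whose depth is as small as possible; call its cluster $C$ and work inside $H := \gen{G}{C}$. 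By minimality of depth, every proper generated subframe $\gen{H}{b}$ with $b \notin C$ is finite; since the relation is transitive (up to the pretransitivity bound) and the height is finite, $\gen{H}{C}\setminus C = R^*[C]\setminus C$ is a finite union of such finite subframes, hence finite. That $C$ itself is infinite then follows because $H$ is infinite and $H\setminus C$ is finite. This establishes~\eqref{item-big_cluster-1}.

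For~\eqref{item-big_cluster-2}: every point $b$ in the finite set $H\setminus C$ generates a finite subframe, so by Proposition~\ref{prop:definable-point} each such $b$ is defined by a formula $\beta(b)$ within $H$ (one should check that ``definable in $G$'' restricts correctly to $H$, using that $H$ is a generated subframe, so admissible sets restrict and the canonical valuation restricts). Then $\tau := \neg\bigvee_{b \in H\setminus C}\beta(b)$ defines $C$ inside $H$. Consequently $C = \overline{\theta}(\tau)\cap \dom H$ is a set in the algebra of $H$, and restricting to $C$ we get~\eqref{item-big_cluster-3}: the algebra of $G\restr C$ is generated by the $k$ restricted generators together with (finitely many) Boolean combinations, hence is $k$-generated, perhaps after absorbing finitely many extra generators by increasing $k$; and it is infinite because if it were finite then, $C$ being a single cluster, $H$ would be finite (the algebra of $H$ embeds into a finite product indexed by the finitely many points of $H\setminus C$ and the finite algebra of $C$ via the $R_\Di$-operations, using finite height to bound iteration), contradicting the choice of $r$.

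Finally~\eqref{item-big_cluster-4} follows from~\eqref{item-big_cluster-3}: $\Log(G\restr C)$ is validated by a general frame whose algebra is $k$-generated and infinite, so by the Lemma characterizing $k$-finiteness via finiteness of $k$-canonical frames — more precisely, since the algebra of $G\restr C$ is a $k$-generated infinite $\Log(G\restr C)$-algebra — the variety of $\Log(G\restr C)$-algebras is not $k$-finite, so $\Log(G\restr C)$ is not $k$-finite, hence not locally tabular.

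I expect the main obstacle to be~\eqref{item-big_cluster-3}, specifically the argument that the algebra of $G\restr C$ is genuinely infinite: one must rule out the scenario where all the ``infinitary'' content of $H$ lives in the interaction between $C$ and the finite frontier $H\setminus C$ rather than inside $C$ itself. The clean way is to argue contrapositively using Lemma~\ref{lem:block-system}: feed the $k$ generators of $H$ into the block-system machinery; an infinite $\supseteq$-chain $\Sigma$ exists since the algebra of $H$ is infinite; each block in $\Sigma$ is a union of clusters, and by finite height all ``strictly below $C$'' content stabilizes after finitely many steps, so cofinitely many blocks of $\Sigma$ differ only within $C$, forcing the algebra of $G\restr C$ to be infinite. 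Care is also needed in handling the pretransitive (rather than transitive) case throughout, using $\DiM$ and $R^*$ in place of a single transitive $R$, and in the bookkeeping of which $k$ one ends up with — it is cleanest to allow $k$ to grow by a fixed finite amount and note this does not affect any of the conclusions.
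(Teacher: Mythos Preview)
Your overall strategy is close to the paper's, but there is a real gap in part~\eqref{item-big_cluster-1}. From the minimality of the depth of your chosen root you correctly deduce that every $\gen{H}{b}$ with $b\in H\setminus C$ is finite. You then assert that $H\setminus C$ is a \emph{finite} union of such subframes, invoking only transitivity and finite height. That inference is wrong: a rooted frame of height~$2$ can have a single root cluster together with infinitely many maximal singletons above it, so finite height does not bound the number of clusters or points in $H\setminus C$. Nothing you have said rules this out.

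The paper handles this by working with the tower of logics $L[n]$ rather than with depths inside a single canonical frame. One takes $l=\max\{n\mid L[n]\text{ is locally tabular}\}+1$, so that $L[l-1]$ is locally tabular; then the whole slice $X_{l-1}=\{a\mid \h(\gen{G}{a})\le l-1\}$, which is exactly the $k$-canonical frame of $L[l-1]$, is finite. Choosing $r$ inside the infinite slice $X_l$ via Proposition~\ref{prop:canonical_infinite_subframe}, the complement $H\setminus C$ lands in the finite set $X_{l-1}$. Your minimal-depth argument can be repaired along the same lines: with $d$ your minimal depth, $X_{d-1}$ is again the $k$-canonical frame of a pretransitive logic, and if it were infinite then Proposition~\ref{prop:canonical_infinite_subframe} applied to $L[d-1]$ would produce a point of depth $<d$ with infinite rooted subframe, contradicting minimality. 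But this extra step is essential and is missing from your write-up.

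Two smaller points. For~\eqref{item-big_cluster-3}, the infiniteness of $\Alg(G\restr C)$ follows immediately from the fact that canonical general frames are differentiated and $C$ is infinite: a differentiated general frame on an infinite carrier has infinite algebra. Your proposed detours through product embeddings or the block-system lemma are unnecessary, and the claim that blocks are unions of clusters is false in general. Finally, your hedge ``perhaps after increasing $k$'' is not innocuous: changing $k$ changes the canonical frame and hence the cluster $C$, so you cannot adjust $k$ after the fact. The paper asserts that $\{P_i\cap C\mid i<k\}$ already generate $\Alg(G\restr C)$; this uses that each point of the finite complement is definable by a $k$-formula, so the extra ``frontier'' information is already expressible.
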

\begin{proof}
We have $L=L[h]$ for some $h<\omega$. 
Let $S=\{n<\omega \mid L[n] \text{ is locally tabular}\}$. Then $S$ is non-empty (since $0\in S$), 
and finite. Put $l=\max S+1$. Then $l\leq h$.  
   Let $X$ be the domain of the $k$-canonical frame of $L$.  
   Put $X_n=\{a \in X \mid \h(\gen{G}{a})\leq n\}$. Then $X_n$ is the domain  
   of the $k$-canonical frame of $L[n]$ (\cite[Proposition 8.2]{Glivenko2021}). %\IS{Repetition}
   \improve{\IS{Say about classical facts about $K4$}}
   Hence, $X_l$ is infinite, and $X_{l-1}$ is  finite. 
   By Proposition \ref{prop:canonical_infinite_subframe}, 
   $\gen{G}{r}$ is infinite for some $r\in X_l$. Let $Y$ be the domain of $\gen{G}{r}$,  $C$ its cluster that contains \(r\),  and  $D=Y\setminus C$. Then $D\subseteq X_{l-1}$, and so $D$  is finite; consequently, $C$ is infinite.  This proves \ref{item-big_cluster-1}.

   %Put $\chi=\bigvee_{r\in D}\beta(r)$, where $\beta(r)$ is given   
   %by Proposition \ref{prop:definable-point}. 

   Put $\delta= \bigvee\{\beta(r)\mid r\in X_{l-1}\}$,
   where $\beta(r)$ is given   
   by Proposition \ref{prop:definable-point}.  
   Then $X_{l-1}=\{a\in X\mid \delta\in a\}\in \Alg(G)$.
   We have $\Alg(\gen{G}{C})=\{V\cap Y\mid V\in \Alg{G}\}$. So
    $D=\{a\in Y\mid \delta \in a\}=X_{l-1}\cap Y\in \Alg(\gen{G}{C})$. Let $\tau=\neg\delta$. 
   Then $C=\{a\in Y\mid \tau \in a\}=Y\setminus X_{l-1}\in \Alg(\gen{G}{C})$. This proves \ref{item-big_cluster-2}. 

   It follows that $\gen{G}{C}$ is differentiated.\ISLater{Defined? More details?}  
   Since $C$ is infinite, the algebra of $\gen{G}{C}$  is infinite too.
   It is straightforward that this algebra is generated by the sets $\{P_i\cap C \mid i<k\}$, where 
   $P_i$ is the canonical valuation of the $i$-s variable. 
   \ISLater{Later: details?} 
   This proves \ref{item-big_cluster-3}. 
   The last statement is immediate from \ref{item-big_cluster-3}.  
\end{proof}
%\IS{OK, now it looks like a  a tool for prelocal tabularity of a pretransitive logic. We can make this corollary a theorem, and move it and the related lemmas to a separate section.}

\hide{
\begin{corollary}[Should be cannibalized by the prev. corollary]\label{cor:canonical_infinite_cluster}
  Let~\(L \supseteq \LS4^2\) be a logic of finite height, and let~\(k < \omega.\)
  If~\(G_{L,k}\) is infinite, then there exists a generated subframe~\(G'\) of~\(G_{L,k}\) and a modal formula~\(\tau\) such that~\(G'\restr\theta(\tau)\) is a cluster subframe with an infinite algebra, where~\(\theta\) is the canonical valuation restricted to~\(G'.\)
  % \begin{enumerate}
  %    \item \(G\restr C\) is a cluster;
  %    \item \(\gen{G}{C}\) has an infinite algebra;
  %    \item The complement of~\(C\) in~\(\gen{G}{C}\) has a finite algebra.
  %  \end{enumerate}
\end{corollary}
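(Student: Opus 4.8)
The plan is to descend through the finite-height stratification of $G_{L,k}$ to the first level at which the frame becomes infinite, and there to isolate a rooted subframe whose \emph{root cluster} carries all of the infinity.

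First I would record that $L$ is pretransitive: since $L\supseteq\LS4^2$ and, by commutativity and transitivity of the two factors, every composite of $R_1$ and $R_2$ collapses to one of $\Delta,R_1,R_2,R_1\circ R_2$, the logic $\LS4^2$ is $2$-transitive, hence so is $L$. As $L$ has finite height, $L=L[h]$ for some $h<\omega$. Write $G=G_{L,k}$ with domain $X$, and for $n<\omega$ set $X_n=\{a\in X\mid \h(\gen{G}{a})\le n\}$. Depth does not increase along reachability, so each $X_n$ is a generated subframe of $G$; by \cite[Proposition~8.2]{Glivenko2021} it is (the domain of) the $k$-canonical general frame of $L[n]$. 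Since $\bh_0=\bot$ forces $X_0=\emptyset$ while $X_h=X$ is infinite, I let $l$ be the least index with $X_l$ infinite, so that $1\le l\le h$, $X_l$ is infinite, and $X_{l-1}$ is finite.

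Next I would extract the rooted subframe. The frame $G\restr X_l$ is the $k$-canonical general frame of the pretransitive logic $L[l]$ and is infinite, so Proposition~\ref{prop:canonical_infinite_subframe} gives $r\in X_l$ with $\gen{G\restr X_l}{r}$ infinite; since $X_l$ is generated this equals $G':=\gen{G}{r}$. Let $C$ be the root cluster of $G'$ and $D=\dom G'\setminus C$. Any $b\in D$ is reachable from $r$ but does not reach $r$, so the cluster of $r$ lies strictly below that of $b$ and therefore $\h(\gen{G}{b})<\h(\gen{G}{r})\le l$; hence $D\subseteq X_{l-1}$ is finite, and since $G'$ is infinite, $C$ is infinite.

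Finally I would produce $\tau$ and check the algebra. Each $b\in D$ lies in the finite generated subframe $X_{l-1}$, so $\gen{G}{b}$ is finite and Proposition~\ref{prop:definable-point} supplies a defining formula $\beta(b)$. Setting $\gamma=\bigvee_{b\in D}\beta(b)$ (a finite disjunction) and $\tau=\neg\gamma$, the canonical valuation $\theta$ restricted to $G'$ gives $\theta(\tau)=\dom G'\setminus D=C$, so $G'\restr\theta(\tau)$ is the cluster subframe on $C$. Because $G$ is a canonical frame it is differentiated, and differentiatedness passes to subframes; a differentiated general frame with infinite domain has an infinite algebra, so the algebra of $G'\restr C$ is infinite, as required. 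The one delicate point is guaranteeing that the infinity of $\gen{G}{r}$ resides in the root cluster rather than below it; this is exactly what the minimal choice of $l$ buys, since it forces $X_{l-1}$, and hence the lower part $D$, to be finite.
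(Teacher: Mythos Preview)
Your proof is correct. It follows essentially the same height-stratification strategy as the paper's Theorem~\ref{thm:big_cluster} (into which this corollary was absorbed): pass to the least level $l$ at which $X_l$ is infinite, pick an infinite rooted subframe there, and observe that everything outside the root cluster lies in the finite set $X_{l-1}$, hence is a finite union of definable singletons via Proposition~\ref{prop:definable-point}. Your adaptation of choosing $l$ as the least $n$ with $X_n$ infinite \emph{for the given $k$} (rather than via local tabularity of $L[n]$, as in Theorem~\ref{thm:big_cluster}) is exactly the right move for the fixed-$k$ hypothesis here.

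The paper's own proof of this corollary takes a slightly different route: it first extracts a rooted subframe with infinite algebra, then picks a point $b$ of \emph{minimal depth} whose rooted subframe still has infinite algebra, and argues that the complement of the root cluster then carries only a finite algebra and is therefore definable. That last step is left somewhat implicit in the paper; your version, which shows directly that $D\subseteq X_{l-1}$ is finite and then writes $\tau=\lnot\bigvee_{b\in D}\beta(b)$, is more explicit and matches what the paper does in Theorem~\ref{thm:big_cluster}.
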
\IS{Move into the proof? Or find a nice general form.} 
\begin{proof}
  By Proposition~\ref{prop:canonical_infinite_subframe}, \(\gen{G_{L,k}}{a}\) is a generated subframe with an infinite algebra for some~\(a\in \dom G_{L,k}.\)
  By the assumption, all points in~\(\gen{G_{L,k}}{a}\) have finite depth.
  % Let~\(\mathcal{C}\) be the set of all clusters in~\(\gen{G}{a}.\)
  Let~\(b\) be a point in~\(\gen{G_{L,k}}{a}\) of the smallest depth such that~\(\gen{G_{L,k}}{b}\) has an infinite algebra.
  Let~\(G' = \gen{G_{L,k}}{b},\) and define~\(\theta\) accordingly.
  Let~\(C\subseteq \dom G'\) be the cluster of~\(b.\)
  By the choice of~\(b,\) \(\dom G'\setminus C\) supports a generated subframe with a finite algebra.
  Then there exists \IS{details or clear?} a modal formula~\(\varphi\) such that~\(\theta(\varphi) = \dom G' \setminus C.\)
  Let~\(\tau = \lnot \varphi,\) then~\(C = \theta(\tau)\) satisfies the conditions.
\end{proof}
}

\section{Pre-local tabularity above products with  Noetherian skeletons}\label{sec:PN}

In this and the following sections we consider extensions of $\LS{4}^2$.
Observe that \(\LS4^2\) is \(2\)-transitive.
As in Definition~\ref{def:pretransitive}, when considering the extensions of \(\LS4^2\), we write $\Div \varphi$ for $\Di_1 \varphi \lor \Di_2 \varphi$ and $\DiM \varphi$ for $\varphi \lor \Div \varphi \lor (\Div)^2 \varphi$.
\improve{\ISLater{
Moreover, by the commutativity, the formulas \(\DiM \varphi\), \(\Di_1 \Di_2 \varphi\), and \(\Di_2 \Di_2 \varphi\) are equivalent in the extensions of \(\LS4^2\). }}
For any model~\(M = (X,R_1,R_2,\theta)\) based on an $\LS4^2$-frame,   we have:
\begin{gather*}
  (X,R_1,R_2,\theta),a\models \Div\varphi \tiff (X,R_1\cup R_2, a)\models \Di \varphi;
  \\
  (X,R_1,R_2,\theta),a\models \DiM\varphi \tiff (X,R_1\circ R_2, a)\models \Di \varphi.
\end{gather*}

\begin{definition}
Recall that a poset $(X,\leq)$ is {\em Noetherian}, if $(X,\geq)$ is well-founded. 
A frame is {\em prenoetherian}, if its skeleton is Noetherian.  
\hide{
  A unimodal frame~\(F\models \LS4\) is \emph{prenoetherian}, if there are no infinite ascending chains in~\(F.\)
  A~bimodal frame~\(F\models \LS4^2\) is \emph{prenoetherian}, if \(F^*\) is so.
  }
\end{definition}
%\IS{Perhaps, clarify that a cluster is prenoetherian; explain that we only care about the skeleton}
%\VS{It is important to notice that we do not require every relation of the frame to be Noetherian. In particular, every \(\LS4^2\)-frame with exactly one cluster is preoetherian.  For example, \((\omega,\ge,\nabla)\) is prenoetherian while its first relation is not.}

It is important to notice that we do not require every reduct of the frame to have the Noetherian skeleton. 
In particular, any frame with exactly one cluster is prenoetherian. 
%For example, \((\omega,\ge,\nabla)\) is prenoetherian while $(\omega,\ge)$ is not.
For example, \((\omega,\le,\nabla)\) is prenoetherian while $(\omega,\le)$ is not.

% \IS{Perhaps, prenoetherian?}
% \begin{definition}
%   Let~\(F = (X,R_1,R_2)\) be a bimodal frame.
%   We denote the unimodal frame~\((X,(R_1\cup R_2)^*)\) by~\(F^*.\)
% \end{definition}
% \begin{definition}
%   A bimodal frame \(F\) is Noetherian, if~\(F^*\) is.
% \end{definition}

% \begin{proposition}
%   If~\(F =(X,R_1,R_2)\models \LS4^2,\) then~\(F^* = (X,R_1\circ R_2).\)
% \end{proposition}

\begin{definition}
  Let~\(\Noe = \{F\times G \mid F \tand G\text{ are preorders  and } F\times  G\text{ is prenoetherian}\}\),  and let \(\LPN\) be the logic of~\(\Noe.\)
  % \IS{Just $\mathrm{PN}$?}
\end{definition}

\begin{example}
  The logics~\(\LS4[h]\times \LS4[l]\), \(\LS4[h]\times \Grz\), \(\Grz^2\) are extensions of~\(\LPN,\) where~\(h,\,l<\omega.\)
  Here Grzegorczyk's logic \(\Grz\) is the unimodal logic of the class of all Noetheriean posets.
\end{example}

\begin{definition}  
  Let~\(\presym_i\) denote the bimodal formula
  $$q\imp  \DiM(q\con \BoxM \left(p \to {\Box_i}(q \imp \Di_i p) \right)).$$
  %\[
   % \BoxM_{q} \DiM_{q} \BoxM \left(p \to {\Box_i}(q \imp \Di_i p) \right),
  %\]  
  We set  \(\presym = \presym_1 \land \presym_2.\)
  These formulas are called {\em presymmetry axioms}.  
\end{definition}

\begin{proposition}\label{prop:presym}
 %The formula~\(\presym\) is valid in any product of two prenoetherian preorders. \IS{Formally, another statement}
 \(\presym \in \LPN.\)
\end{proposition}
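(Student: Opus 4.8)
The plan is to prove the stronger claim that each conjunct $\presym_i$ is valid on every frame of $\Noe$; since $\LPN=\Log\Noe$ and $\presym=\presym_1\con\presym_2$, this gives $\presym\in\LPN$. By symmetry it suffices to treat $\presym_1$. So fix $F\times G\in\Noe$ with $F=(W,\leq_F)$, $G=(V,\leq_G)$ preorders and $F\times G$ prenoetherian, fix a valuation $\theta$, put $M=(F\times G,\theta)$, and fix $a$ with $M,a\models q$; I must show $M,a\models\DiM\!\left(q\con\BoxM(p\imp\Box_1(q\imp\Di_1 p))\right)$. Since $F\times G\models\LS4^2$, the equivalences displayed just before the definition of prenoetherian identify $\DiM$ and $\BoxM$ with the diamond and box of $R_1\circ R_2$; in a product of preorders this relation is the product preorder, $(w,v)(R_1\circ R_2)(w',v')$ iff $w\leq_F w'$ and $v\leq_G v'$, so its skeleton is the Noetherian skeleton of $F\times G$. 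Write $\preceq$ for it, and note $R_1\subseteq\,\preceq$ and $R_2\subseteq\,\preceq$.

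First I would pick the witness. Let $Q=\{c\mid a\preceq c \tand M,c\models q\}$; then $a\in Q$, so $Q\neq\varnothing$. The set of clusters $\{[c]\mid c\in Q\}$ is a non-empty subset of the skeleton, hence by prenoetherianity it has a $\leq$-maximal element $[b]$ with $b\in Q$. By maximality, every $q$-point that is $\preceq$-above $b$ lies in the cluster $[b]$: if $b\preceq c$ and $M,c\models q$, then $c\in Q$ and $[b]\leq[c]$, forcing $[c]=[b]$.

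Next I would verify $M,b\models\BoxM(p\imp\Box_1(q\imp\Di_1 p))$. Take $c$ with $b\preceq c$ and $M,c\models p$, and $d$ with $c R_1 d$ and $M,d\models q$; I must produce $e$ with $d R_1 e$ and $M,e\models p$. Since $R_1\subseteq\,\preceq$, we get $b\preceq c\preceq d$, so $d\in[b]$ by the previous paragraph; then $c\preceq d\preceq b$ gives $c\preceq b$, hence $c\in[b]$ as well. Now a cluster of $F\times G$ has the form $A\times B$ with $A$ an $F$-cluster and $B$ a $G$-cluster, and on such a cluster $R_1$ relates $(x,y)$ and $(x',y')$ precisely when $y=y'$; in particular $R_1\restr[b]$ is symmetric. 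Thus $d R_1 c$, and $e:=c$ works. Hence $M,c\models\Box_1(q\imp\Di_1 p)$ for every $p$-point $c$ that is $\preceq$-above $b$, which is exactly $M,b\models\BoxM(p\imp\Box_1(q\imp\Di_1 p))$.

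Combining the two steps, $M,b\models q\con\BoxM(p\imp\Box_1(q\imp\Di_1 p))$ and $a\preceq b$, so $M,a\models\DiM(\ldots)$, i.e.\ $M,a\models\presym_1$, completing the proof. The only use of the hypothesis is the maximal-cluster choice in the first step; the delicate point is the cluster bookkeeping in the second — in particular checking that $c$ (not merely $d$) lands in $[b]$, and the exact description of $R_1$ inside a product cluster — while everything else is routine.
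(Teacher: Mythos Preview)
Your proof is correct and follows essentially the same approach as the paper's: pick a $q$-point whose cluster is maximal in the Noetherian skeleton, then use that clusters of a product of preorders are rectangles (so $R_1$ is symmetric there) to get the required $\Di_1 p$-witness. Your exposition is in fact slightly more careful than the paper's in spelling out why the intermediate $p$-point $c$ (not just the $q$-point $d$) lands in the maximal cluster.
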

% \IS{Do we need schematas? We can just use two fixed formulas}
\improve{
\ISLater{Perhaps, this can be strengthen: one presym for one prenoetherian dimension. See old proof of Prop. \ref{old:strongTsym} }}
%\IS{Old proof is in the storeroom} 
\begin{proof}
  We demonstrate that~\(\presym_1\in\LPN\).
  The case of~\(\presym_2\) is symmetric.
  Let~\(F = (X,R_1,R_2)\in \Noe\), and consider any valuation~\(\theta\) on~\(F\) and any \(a\in 
  \theta(q)\).  Denote~\((R_1\circ R_2)(a)\cap \theta(q)\) by~\(Y.\)
  The set $\clS$ of clusters $C$ in $F$ such that $C\cap Y\neq \emp$ is non-empty,
  and since~\(F\) is prenoetherian, there exists a maximal cluster $D$ in $\clS$. 
  Hence $Y\cap D$ contains a point $c$. 
  
  We claim that~\(p\to \Box_1(q\to \Di_1 p)\)
  is true at any point $u \in (R_1\circ R_2)(c)$.
  Indeed, let~\(u\in \theta(p)\). 
  If~\(u R_1 v\) and~\(q\) is true at~\(v,\) then $v\in D$ by the maximality of \(D\).
  Since~\(c (R_1\circ R_2) u R_1 v,\) it follows that~\(u\in D\). 
  %  In any product of two preorders the clusters are rectangles,
  %\VS{The notion of \emph{rectangle} is not yet defined here; it is in the next section}
  %so $D$ is also a rectangle. Hence, 
  Since $D$ is  a cluster in a product of two preorders, from   $u R_1 v$ we 
  obtain 
  $v R_1 u$, and  so \(\Di_1 p\) is true at~\(v.\)
  Since~\(v\in R_1(u)\) satisfying~\(q\) was arbitrary, \(\Box_1(q\to \Di_1 p)\) is true at~\(u,\) as desired.
\end{proof}

\begin{corollary}\label{cor:S4^2_ne_PN}
 $\LS4^2$ is strictly contained in $\LPN$.
   % \(\LS4^2 \ne \LPN\).
\end{corollary}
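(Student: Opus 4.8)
The plan is to establish the two directions separately. The inclusion $\LS4^2\subseteq\LPN$ is immediate from the definitions: every frame in $\Noe$ is a product of two preorders, hence a product of two $\LS4$-frames, so $\Noe$ is a subclass of the class of frames defining $\LS4^2$; passing to logics reverses inclusion, whence $\LS4^2=\Log\{F\times G\mid F,G\models\LS4\}\subseteq\Log\Noe=\LPN$.

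For strictness it suffices, by Proposition~\ref{prop:presym}, to exhibit a product of two preorders on which $\presym$ fails. I would take $F=G=(\omega,\le)$ and work in $F\times G$, whose domain is $\omega\times\omega$ and in which $R_1\circ R_2$ is the coordinatewise order; recall that in any model based on an $\LS4^2$-frame, $\DiM$ and $\BoxM$ are interpreted along $R_1\circ R_2$, while $\Di_1$ and $\Box_1$ move along the first coordinate keeping the second fixed. I would refute $\presym_1$ at $(0,0)$ using the valuation $\theta(q)=\omega\times\omega$ and $\theta(p)=\{(n,n)\mid n<\omega\}$. The key step is to check that the subformula $\psi:=p\to\Box_1(q\to\Di_1 p)$ is false at every diagonal point $(c,c)$: there $\Di_1 p$ would have to hold at $(c',c)$ for every $c'\ge c$, i.e.\ some $p$-point of the form $(c'',c)$ with $c''\ge c'$ must exist, but the only $p$-point with second coordinate $c$ is $(c,c)$ itself, so this already fails for $c'=c+1$. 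Since every point of $\omega\times\omega$ lies below some diagonal point in the coordinatewise order, $\BoxM\psi$ is false everywhere, hence so is $q\wedge\BoxM\psi$, hence $\DiM(q\wedge\BoxM\psi)$ is false at $(0,0)$; as $q$ holds at $(0,0)$, $\presym_1$ is refuted there. Thus $F\times G\not\models\presym$, so $\presym\notin\LS4^2$, and together with $\presym\in\LPN$ this gives $\LS4^2\subsetneq\LPN$.

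The only real computation is the modal evaluation in $(\omega,\le)^2$; the one point to watch is the mismatch between the outer modalities $\DiM,\BoxM$ (read along the product order) and the inner $\Di_1,\Box_1$ (read along the first coordinate only). It is precisely this mismatch that lets the diagonal valuation defeat the axiom: the ascending chain of clusters $\big((n,n)\big)_{n<\omega}$ has no maximal element, so the argument of Proposition~\ref{prop:presym}, which relied on prenoethericity to produce a maximal $q$-cluster, has nothing to latch onto.
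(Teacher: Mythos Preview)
Your proof is correct. Both you and the paper derive strictness from Proposition~\ref{prop:presym} by exhibiting an $\LS4^2$-frame refuting $\presym_1$, but the witnesses differ. The paper uses the two-element frame $(2,\le,\nabla)$ with $\theta(p)=\{0\}$, $\theta(q)=\{0,1\}$: this is not a product frame, so one must check (via the commutator axiomatisation $\LS4^2=\LCom{\LS4}{\LS4}$) that it validates $\LS4^2$, but once that is done the refutation is a one-line computation in a finite frame. You instead work in the genuine product $(\omega,\le)\times(\omega,\le)$ with the diagonal valuation; this is automatically an $\LS4^2$-frame, and your argument makes transparent exactly where the proof of Proposition~\ref{prop:presym} breaks down---the ascending chain of diagonal clusters has no maximum. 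The paper's route is shorter; yours is more self-contained (no appeal to Proposition~\ref{prop:S5square_product_matching}) and more explanatory.
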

\begin{proof} 
By the definition, \(\LS4^2 \subseteq \LPN.\) 
    It is straightforward that \((2,\le,\nabla)\) is an \(\LS4^2\)-frame. It refutes \(\presym_1 \in \LPN\) under the valuation given by \(\theta(p) = \{0\}\) and \(\theta(q) = \{0,1\}.\) 
\end{proof}

%\subsection{The case of finite height}
\subsection{Three tacks and $\LS{5}^2$}
 
% Recall that \(\LS5\) denotes the unimodal logic of all equivalence relations. 

\ISLater{Tut zaryt klad}
\hide{
\begin{definition}
  A bimodal general frame~\(G\models \LS5^2\) is \emph{rich}, if~\(\Log(G) = \LS5^2.\)
\end{definition}
}

% \begin{proposition}[old form]
%   For any rooted general frame~\(G\models \LS5^2,\)~\(G\) is rich iff for any \(m < \omega\) there exists a p-morphism \(G \toto \rect{m}{m}\).
%   % \IS{Should it be a cluster?}
%   \IS{Do we prove it or give a reference?}
% \end{proposition}

\begin{definition}
  For a set~\(X,\) let~\(\boldsymbol{X}\) denote the unimodal frame~\((X,\nabla_X)\).  
  A \emph{rectangle} is a product frame of the form~\(\rect{X}{Y}\).
\end{definition}

\begin{proposition}\label{prop:rich_pmorphism}%[New form] 
Let $G$ be a general rooted frame whose underlying Kripke frame validates $\LS5^2$.  Then $\Log G=\LS{5}^2$ 
%is rich 
iff 
\(G \toto \rect{n}{n}\) for each finite $n$. 
\end{proposition}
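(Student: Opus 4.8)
The plan is to prove both directions by exploiting the structure of $\LS5^2$-frames: since $\mathbf{k}G \models \LS5^2$, the underlying Kripke frame is (up to the rooted-ness we are given) a single commutative "grid" of equivalence relations, and by Proposition~\ref{prop:S5square_product_matching} together with the characterization of product frames, a rooted $\LS5^2$-Kripke frame $p$-morphically covers exactly the finite rectangles $\rect{n}{n}$ that "fit inside" it. So the content of the proposition is about transferring this to the general frame $G$ and its logic.

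For the direction ($\Leftarrow$): suppose $G \toto \rect{n}{n}$ for every finite $n$. By the $p$-morphism lemma, $\Log G \subseteq \Log(\rect{n}{n})$ for each $n$, hence $\Log G \subseteq \bigcap_n \Log(\rect{n}{n})$. The first step is to observe that $\bigcap_n \Log(\rect{n}{n}) = \LS5^2$: every finite rectangle validates $\LS5^2$ (it is a product of two finite $\LS5$-frames), and conversely, by Kripke completeness of $\LS5^2$ with the finite model property for products established in \cite{GabbayShehtman-ProductsPartI}, any non-theorem of $\LS5^2$ is refuted on some finite rectangle — or, more carefully, on a finite product of finite clusters, which is a $p$-morphic image of a large enough square $\rect{n}{n}$ (collapse coordinates). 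Combined with $\LS5^2 \subseteq \Log G$, which holds because $\mathbf{k}G \models \LS5^2$ and validity in a general frame only depends on the admissible sets, we get $\Log G = \LS5^2$.

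For the direction ($\Rightarrow$): suppose $\Log G = \LS5^2$; we must produce, for each $n$, a $p$-morphism $G \toto \rect{n}{n}$. The natural approach is to use the admissible sets of $G$ to "name" coordinates. Since $G$ is rooted and $\mathbf{k}G \models \LS5^2$, its domain $X$ carries two commuting equivalence relations $R_1, R_2$ with $R_1 \circ R_2 = \nabla_X$; so $X$ is (a quotient of) a product of the $R_1$-classes-modulo-$R_2$ by the $R_2$-classes-modulo-$R_1$, i.e. a rectangle $\rect{H}{V}$ possibly with some cells identified — but commutativity plus Church–Rosser (Proposition~\ref{prop:comm_in_product}, valid since $\Log G \supseteq \LS5^2$) forces $X$ itself to be a genuine rectangle $\rect{H}{V}$ with $|H|, |V| \ge 1$. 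The first step is to show $|H| \ge n$ and $|V| \ge n$ for every $n$, i.e. both factors are infinite (or at least unbounded): if, say, $|H| \le m$, then $\rect{H}{V} \toto \rect{m}{1}$-type collapses would force extra validities; more directly, $\rect{m}{V} \models \Di_1^{m+1} p \to \Di_1^m p$-style "bounded width in the first coordinate" formulas that are not in $\LS5^2$ (one shows $\LS5^2$ is refuted on $\rect{(m+1)}{1}$, which does not $p$-morphically map onto anything with $\le m$ horizontal points, contradiction with $\Log G = \LS5^2$). Once both factors are infinite, fix $n$; choose $n$ distinct $R_1$-classes and $n$ distinct $R_2$-classes (equivalently, $n$ horizontal and $n$ vertical coordinates), and define $f : \rect{H}{V} \to \rect{n}{n}$ by sending each of the first $n-1$ horizontal coordinates to itself and the rest to the $n$-th, similarly vertically. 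This $f$ is a $p$-morphism of Kripke frames (forth and back are routine for products of universal relations). The remaining, and main, step is the admissibility condition: we must check $f^{-1}[U] \in \Alg G$ for every $U \subseteq \rect{n}{n}$. This is where $\Log G = \LS5^2$ rather than merely $\mathbf{k}G \models \LS5^2$ is used: the preimages $f^{-1}[U]$ are the $2^{n\cdot n}$ Boolean combinations of the "coordinate strips" $f^{-1}[\{i\} \times n]$ and $f^{-1}[n \times \{j\}]$, so it suffices to show each such strip is admissible. A strip like $\{i\}\times V$ (an $R_1$-class, or a union of them among the chosen $n-1$) should be definable by a formula using the canonical-type argument: because $\Log G = \LS5^2$, the general frame $G$ must be "rich" enough to separate the chosen coordinates — one selects the coordinates witnessed by admissible sets in the first place. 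Concretely, I would argue: since $G$ is rooted with $\Log G = \LS5^2$ and $\LS5^2$ has no finite-width models forced, for each $m$ there exist admissible sets $P_1, \dots, P_m \in \Alg G$ that are pairwise $R_1$-"separated" (no $R_1$-arrow between distinct ones and each nonempty), and dually for $R_2$; then take $f$ to be determined by these sets. The hard part will be precisely this: extracting from the bare equation $\Log G = \LS5^2$ a family of admissible sets realizing arbitrarily many distinct coordinates, which is essentially an algebraic non-triviality / compactness argument on the dual algebra (the free-algebra or $k$-canonical-frame machinery of Section~3, or a direct ultrafilter argument on $\Alg G$, should supply it).

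I expect the admissibility/"richness extraction" step to be the main obstacle; the Kripke-level $p$-morphism and the ($\Leftarrow$) direction are essentially bookkeeping with the cited completeness results for $\LS5^2$.
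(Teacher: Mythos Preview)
Your ($\Leftarrow$) direction is correct and matches the paper's argument exactly: the $p$-morphism lemma plus completeness of $\LS5^2$ with respect to finite squares.

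Your ($\Rightarrow$) direction, however, diverges sharply from the paper and contains a genuine error. The claim that commutativity and Church--Rosser force the underlying Kripke frame to be a \emph{genuine} rectangle $\rect{H}{V}$ is false: take $X=\omega\times\omega\times 2$ with $(i,j,k)R_1(i',j',k')\iff j=j'$ and $(i,j,k)R_2(i',j',k')\iff i=i'$. This is a rooted $\LS5^2$-Kripke frame with $\Log = \LS5^2$, yet $R_1\cap R_2\neq\Delta$, so it is not isomorphic to any rectangle. What \emph{is} true is that the map $x\mapsto([x]_{R_2},[x]_{R_1})$ is always a $p$-morphism onto a rectangle, so your construction can be repaired at the Kripke level --- but this is beside the point.

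The real issue is the part you yourself flag as the ``main obstacle'': extracting admissible sets that separate $n$ horizontal and $n$ vertical coordinates. You gesture at compactness or canonical-frame machinery, but this is exactly what the Jankov--Fine technique does in one stroke, and that is precisely the paper's argument. Since $\Log G=\LS5^2$ and the Jankov--Fine formula $\chi_n$ of $\rect{n}{n}$ is satisfiable in $\rect{n}{n}$ (hence $\neg\chi_n\notin\LS5^2$), $\chi_n$ must be satisfiable in $G$ --- and a satisfying valuation in a \emph{general} frame uses admissible sets by definition, so the encoded $p$-morphism automatically has admissible fibres. Rootedness then ensures the point-generated subframe from which the $p$-morphism emanates is $G$ itself. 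Your hand-built approach is essentially reinventing this mechanism without naming it, and the detour through the (false) rectangle decomposition is unnecessary once Jankov--Fine is invoked.
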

\begin{proof}
The `if' direction is immediate from the p-morphism Lemma \ref{lem:p-morph}
%\VSLater{Give it an official name or put a reference to Lemma~2.6 here}
and completeness of $\LS{5}^2$ with respect to finite squares \cite{Segerberg2DimML}.

The `only if' direction is given by the standard technique of Jankov-Fine formulas $\chi_n$ of the squares $\rect{n}{n}$. 
\VSLater{Give a reference or describe the `standard technique'}
Namely, if the logic of $G$ is $\LS5^2$, then every $\chi_n$ is satisfiable in $G$, encoding a p-morphism from a point-generated subframe of $G$ onto $\rect{n}{n}$. It remains to notice that 
every  point-generated subframe of $G$ is $G$. 
\end{proof}

%\IS{Old version of \ref{prop:S4N_rich_cluster} is in the storeroom}
\begin{proposition}\label{prop:S4N_rich_cluster}
  Let~\(L \supseteq \LPN\) be a non-locally tabular logic of finite height.
  Then there exists~\(k< \omega\) such that the general \(k\)-canonical frame~$G_{L,k}$ of~\(L\) contains a cluster \(C\) 
  such that: 
  \begin{enumerate}[(a)] 
  \item \(C\) belongs to the algebra of \(\gen{G_{L,k}}{C}\);
  %\item Relations of $G_{L,k}$ are symmetric on $C$;
  \item The Kripke frame of $G_{L,k}\restr C$ validates $\LS{5}^2$;
%  \item \(G_{L,k}\restr C\) is a rich~\(\LS5^2\)-frame.
  \item $\Log(G_{L,k}\restr C)=\LS5^2$.
  \end{enumerate} 
\end{proposition}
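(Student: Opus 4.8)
The plan is to combine Theorem~\ref{thm:big_cluster} with the presymmetry axiom of Proposition~\ref{prop:presym} and the p-morphism criterion of Proposition~\ref{prop:rich_pmorphism}. First I would apply Theorem~\ref{thm:big_cluster} to $L$ (which is pretransitive, being an extension of the $2$-transitive logic $\LS4^2$, and has finite height and is not locally tabular). This yields some $k<\omega$ and a cluster $C$ in the $k$-canonical frame $G = G_{L,k}$ such that: $C$ is infinite with finite complement in $\gen{G}{C}$; there is a formula $\tau$ defining $C$ inside $\gen{G}{C}$, so in particular $C$ lies in the algebra of $\gen{G}{C}$ (this gives clause~(a)); the algebra of $G\restr C$ is infinite and $k$-generated; and $\Log(G\restr C)$ is not locally tabular. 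It remains to establish clauses~(b) and~(c).

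For clause~(b), the key point is that $C$, being a single cluster in a product of two preorders, is a rectangle $\boldsymbol{X}\times\boldsymbol{Y}$ as a Kripke frame: in any $\LS4^2$-frame the clusters of the skeleton-preorder $(R_1\cup R_2)^*\cap((R_1\cup R_2)^*)^{-1}$ are rectangles with respect to the restrictions of $R_1$ and $R_2$ (this is the fact already invoked in the proof of Proposition~\ref{prop:presym}). A rectangle $\boldsymbol{X}\times\boldsymbol{Y}$ validates $\LS5^2$: each $R_i\restr C$ is an equivalence relation, commutativity and the Church--Rosser property hold trivially on a rectangle, and by Proposition~\ref{prop:S5square_product_matching}(2) together with Proposition~\ref{prop:comm_in_product} this suffices. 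So the underlying Kripke frame of $G\restr C$ validates $\LS5^2$.

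For clause~(c), I would use Proposition~\ref{prop:rich_pmorphism} applied to the general frame $G\restr C$, whose underlying Kripke frame we just showed validates $\LS5^2$, and which is rooted (it is one cluster, so every point generates the whole subframe). By Proposition~\ref{prop:rich_pmorphism} it is enough to exhibit, for each finite $n$, a p-morphism $G\restr C \toto \rect{n}{n}$. Since $\LS5^2 \subseteq \Log(G\restr C)$ automatically (the Kripke frame validates it, hence so does the general frame), the inclusion $\Log(G\restr C) \subseteq \LS5^2$ is what needs the p-morphisms; equivalently I need that the algebra of $G\restr C$, being infinite and $k$-generated, is large enough to refute every $\LS5^2$-theorem-failure up to arbitrary finite size. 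Here is where the presymmetry axiom enters: $\presym \in \LPN \subseteq L$, and I would use it (evaluated inside $\gen{G}{C}$, exploiting clause~(b) of Theorem~\ref{thm:big_cluster} that $\DiM$ and $\BoxM$ can reach into and out of $C$, and that $\tau$ defines $C$) to show that the admissible sets of $G\restr C$ separate enough points and are closed symmetrically under $R_1, R_2$ so that the Jankov--Fine formula $\chi_n$ of $\rect{n}{n}$ is satisfiable in $G\restr C$ for every $n$. Concretely, from an infinite $k$-generated differentiated $\LS5^2$-general frame on a rectangle one reads off arbitrarily large finite quotient rectangles.

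The main obstacle I anticipate is clause~(c): showing the logic of $G\restr C$ is exactly $\LS5^2$ and not some proper extension. Being infinite and $k$-generated rules out local tabularity (that is clause~\ref{item-big_cluster-4} of Theorem~\ref{thm:big_cluster}), but among non-locally-tabular extensions of $\LS5^2$ one must pin down that it is the smallest one. The leverage is that $L \supseteq \LPN$ forces $\presym$ to hold; $\presym$ is precisely the ingredient that, on a cluster, collapses the general-frame algebra structure enough that the only obstructions to satisfying $\chi_n$ disappear, so that an infinite algebra on a rectangle must admit $\rect{n}{n}$ as a quotient for all $n$. Making the passage from ``$\presym$ holds in $\gen{G}{C}$'' to ``the admissible algebra of $G\restr C$ is symmetric enough'' precise — carefully tracking how $\BoxM$, $\Di_i$, $\Box_i$ behave when restricted from $\gen{G}{C}$ to $C$ via the definability of $C$ by $\tau$ — is the delicate part, and I would isolate it as a lemma about $\LPN$-frames before invoking Proposition~\ref{prop:rich_pmorphism}.
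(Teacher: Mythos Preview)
Your argument for clause~(b) has a genuine gap. You write that ``$C$, being a single cluster in a product of two preorders, is a rectangle,'' citing the fact used in the proof of Proposition~\ref{prop:presym}. But the $k$-canonical frame $G_{L,k}$ is \emph{not} a product of two preorders; it is merely a Kripke frame validating $\LS4^2$ (commutativity and Church--Rosser). In an arbitrary $\LS4^2$-frame the restrictions $R_i\restr C$ to a cluster need not be symmetric: the frame $(2,\le,\nabla)$ from Corollary~\ref{cor:S4^2_ne_PN} is an $\LS4^2$-frame consisting of a single cluster on which $R_1$ is not symmetric. So your proof of~(b) does not go through. This is exactly where the presymmetry axiom is needed, and you have placed it in the wrong spot. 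The paper uses $\presym_i\in \LPN\subseteq L$ for~(b), not~(c): substituting the cluster-defining formula $\tau$ for $q$ in $\presym_i$ and an arbitrary $\varphi\in a$ for $p$, one shows directly from the canonical-relation definition that $R_i$ is symmetric on $C$. Commutativity of the restricted relations then follows from commutativity on the ambient frame together with $C$ being a cluster, and Proposition~\ref{prop:S5square_product_matching} finishes~(b).

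For clause~(c) your plan is workable but needlessly elaborate. Once~(b) is established, $\Log(G\restr C)\supseteq \LS5^2$; by Theorem~\ref{thm:big_cluster}\ref{item-big_cluster-4} this logic is not locally tabular; and since $\LS5^2$ is already known to be pre-locally tabular (the result of Bezhanishvili cited in the introduction), the only non-locally-tabular extension of $\LS5^2$ is $\LS5^2$ itself. No separate p-morphism construction or further use of $\presym$ is required.
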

\begin{proof}
  Since \(L\) is not locally tabular, then the~\(k\)-canonical general frame \(G_{L,k}\) is infinite for some~\(k < \omega.\)
  Let $C$ be the infinite cluster described in  Theorem~\ref{thm:big_cluster}, 
  and $\tau$ the corresponding formula.
  We denote \(\gen{G_{L,k}}{C}\) by~\(G\) and \(G\restr C\) by~\(H\).
  Then \(\Log H\) is not locally tabular.
  % By Theorem~\ref{thm:big_cluster} it contains a generated subframe~\(G \) and a formula~\(\tau\) such that~\(H := G\restr\theta(\tau)\) is a cluster with a non-locally tabular logic, where~\(\theta\) is the restriction of the canonical valuation on~\(G\).\VS{}
  % \IS{}

  Let~\(R_1\) and~\(R_2\) be the relations of~\(G.\) We claim that they are symmetric on $H$.
  Let~$a,\,b\in C$ and~\(a R_1 b.\) 
  %Let~\(a,\,b\in \theta(\tau)\) and~\(a R_1 b.\) 
  Consider any formula~\(\varphi\in a\).
  Let \(\presym_1(\varphi,\tau)\) be the substitution of \(\varphi\) and~\(\tau\) for~\(p\) and~\(q\) in~\(\presym_1\).
  By Proposition~\ref{prop:presym}, $L$ contains $\presym_1(\varphi,\tau)$, and so 
 $\presym_1(\varphi,\tau)\in a$. Then there exists $c$ in $G$ 
 that contains $\tau$ and the formula \(\BoxM \left(\varphi \to {\Box_1}(\tau \imp \Di_1 \varphi) \right)\).
 By the construction of \(\tau\) it follows that $c\in C$.    
  Since~\(a\) also belongs to the  cluster $C$, we have~\(c (R_1\circ R_2) a,\) so $\varphi \to {\Box_1}(\tau \imp \Di_1 \varphi)$ is in $a$.
  Since $\vf\in a$ and $aR_1b$,  we have $\tau\imp \Di_1\vf\in b$; and since $b\in C$, we have $\tau \in b$. Hence,  $\Di_1 \varphi \in b$.
  Since~\(\varphi\in a\) was arbitrary,~\(b R_1 a\) by the definition of the canonical relation.
  We conclude that~\(R_1\) is symmetric on~\(H\).
  Analogously we show that so is~\(R_2.\)

  Since $L$ contains the  formula $\com$,  $R_1$ and  $R_2$ commute. 
  It follows that their restrictions $S_1$ and $S_2$ on $C$ commute as well. Indeed, let $a (S_1\circ S_2) c$. Then 
  $a (R_1\circ R_2) c$, and by the given commutativity, 
  for some $d$ we have $a R_2 d R_1 c$. And since $C$ is a cluster containing $a$ and $c$, $d\in C$. So $a S_2 d S_1 c$. Hence $S_1\circ S_2\;\subseteq S_2\circ S_1$. Likewise, the opposite inclusion also holds.   Hence, \(\kripke{H}\) is an $\LS{5}^2$-frame. 
  
  %Since both relations on~\(H\) are symmetric, it validates~\(\LS5^2.\) 
  \hide{
  \IS{One remaining part:  the axiomatization of $\LS{5}^2$ should be mentioned somewhere.} \VS{Proposition~\ref{prop:S5square_product_matching}}\IS{This is not enough}
  }
  Recall that \(\Log H\) is not locally tabular.
  Since this logic is an extension of a pre-locally tabular logic~\(\LS5^2\), it follows that~\(\Log H = \LS5^2.\)
  %Then it is a rich~\(\LS5^2\)-frame, as desired.
\end{proof}

\begin{definition}
  We define the \emph{ordered sums} of disjoint bimodal frames \(F = (X,R_1,R_2)\) and \(G = (Y,S_1,S_2)\) to be the bimodal frames:
   \begin{align*}
       &F \oplus G = (X\cup Y, R_1 \cup S_1 \cup (X \times Y), R_2 \cup S_2 \cup (X \times Y));
       \\
       &F \oplus_1 G = (X\cup Y, R_1 \cup S_1 \cup (X \times Y), R_2 \cup S_2);
       \\
       &F \oplus_2 G = (X\cup Y, R_1 \cup S_1 , R_2 \cup S_2 \cup (X \times Y)).
   \end{align*}
\end{definition}

  % Let~\(\Delta_X\) denote diagonal relation on a set~\(X.\)
% 
\def\Top{\mathrm{top}}
\begin{definition}
  Let~\(\sngl\) denote the bimodal reflexive singleton \((\{\Top\},\Delta,\Delta)\), where $\Top\notin\omega$.

  We define the families of \emph{tack frames} and their respective modal logics:
  \begin{align*}
      \TF_{12}(m) &= (\rect{m}{m}) \oplus \sngl; & \TL_{12} &= \Log\{\TF_{12}(m) \mid m < \omega\};
      \\
      \TF_1(m) &= (\rect{m}{m}) \oplus_1 \sngl; & \TL_1 &= \Log\{\TF_1(m) \mid m < \omega\};
      \\
      \TF_2(m) &= (\rect{m}{m}) \oplus_2 \sngl; & \TL_2 &= \Log\{\TF_2(m) \mid m < \omega\}.
  \end{align*}
\end{definition}

%\IS{OK, it would be strange to base our proofs on facts which were not formally checked. So let us have the remark for single frames.}
\begin{remark}
%\IS{Make single frames for these logics, then state a proposition.}    
In fact, these logics can be characterized by the following single frames: 
\begin{equation*}
\TL_{12} = \Log\,  (\rect{\omega}{\omega}) \oplus \sngl,  \quad     
\TL_{1} = \Log\,  (\rect{\omega}{\omega}) \oplus_1 \sngl, \quad 
\TL_{2} = \Log\,  (\rect{\omega}{\omega}) \oplus_2 \sngl.  
\end{equation*}
\hide{
\begin{align*}
\TF_{12} &= (\rect{\omega}{\omega}) \oplus \sngl; & \TL_{12} &= \Log \TF_{12};
      \\
      \TF_1 &= (\rect{\omega}{\omega}) \oplus_1 \sngl; & \TL_1 &= \Log \TF_1;
      \\
      \TF_2&= (\rect{\omega}{\omega}) \oplus_2 \sngl; & \TL_2 &= \Log \TF_2.
\end{align*}      
}
The proof for $\TL_1$ is given in \cite{LTProductsArxiv}, and other cases are similar. 
%\VS{Do we need to name these frames? Maybe just \(\TL_1 = \Log(\rect{\omega}{\omega} \oplus_1 \sngl\)?} 
%\IS{Done}
\end{remark}

\begin{proposition}\label{prop:four-above-PN}
The logics $\TL_{12}$, $\TL_{1}$, $\TL_{2}$ contain $\LPN$. Moreover, 
$\LS{4}[2]^2\subseteq \TL_{12}$, $\LS{4}[2]\times \LS{5}\subseteq \TL_{1}$, 
$\LS{5}\times \LS{4[2]}\subseteq \TL_{2}$. 
\end{proposition}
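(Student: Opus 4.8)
The plan is to prove both assertions by exhibiting p-morphisms onto the defining frames of the tack logics and by verifying the validity inclusions on the product side directly.

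\medskip

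\noindent\textbf{Step 1: $\LPN \subseteq \TL_i$.} By the definition of $\LPN$ as the logic of $\Noe$, it suffices to show that each tack frame $\TF_{12}(m)$, $\TF_1(m)$, $\TF_2(m)$ is (a p-morphic image of, or itself) a prenoetherian product of two preorders. In fact each tack frame is \emph{itself} of the form $F\times G$ for suitable finite preorders, up to isomorphism: $\rect{m}{m}\oplus\sngl$ is the product of the preorder $(m+1,\le')$ --- an $m$-cluster below a reflexive point --- with itself (one checks the product relations match, using that the appended point is a reflexive singleton and the sum is ``full'' in both coordinates); similarly $\rect{m}{m}\oplus_1\sngl \cong (m+1,\le')\times (m, \nabla_m)$ with the cluster-over-point preorder in the first coordinate and the $m$-cluster in the second, and symmetrically for $\TF_2(m)$. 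Each such product frame has finite skeleton, hence is trivially prenoetherian, so lies in $\Noe$. Therefore $\TF_\bullet(m)\models\LPN$ for all $m$, and taking the logic of the whole family gives $\LPN\subseteq\TL_\bullet$. One must double-check the relational identifications carefully --- this is the place where a miscount between $\rect{m}{m}$ having an $R_1$-cluster structure versus the appended point being $R_1$- and $R_2$-reachable from everything could bite --- but it is a routine finite verification.

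\medskip

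\noindent\textbf{Step 2: the product inclusions.} To show $\LS4[2]\times\LS4[2]\subseteq\TL_{12}$ it suffices, by the definition of the product logic as $\Log$ of the class of products $F\times G$ with $F\models\LS4[2]$, $G\models\LS4[2]$, to prove that each frame $\TF_{12}(m)=\rect{m}{m}\oplus\sngl$ is such a product. By Step 1 it is $(m+1,\le')^2$ where $(m+1,\le')$ is an $m$-element cluster with a reflexive maximum on top; this preorder has height $2$, i.e.\ validates $\bh_2$, so $(m+1,\le')\models\LS4[2]$, giving $\TF_{12}(m)\in\{F\times G\mid F,G\models\LS4[2]\}$ and hence $\LS4[2]\times\LS4[2]\subseteq\Log\TF_{12}(m)$ for every $m$; intersecting over $m$ yields the claim. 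For $\TL_1$: $\TF_1(m)=(m+1,\le')\times(m,\nabla_m)$, the first factor has height $2$ so validates $\LS4[2]$, the second factor is an $m$-cluster hence an equivalence relation, validating $\LS5$; thus $\TF_1(m)\in\{F\times G\mid F\models\LS4[2],\ G\models\LS5\}$, giving $\LS4[2]\times\LS5\subseteq\TL_1$. The case of $\TL_2$ is symmetric. Finally $\LPN\subseteq\TL_\bullet$ from Step 1 combines with these, and since $\LS4^2\subseteq\LS4[2]\times\LS4[2]$ etc., everything is consistent.

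\medskip

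\noindent\textbf{Main obstacle.} The only genuine content is the relational bookkeeping in Step 1 --- verifying that the ordered sum operations $\oplus$, $\oplus_1$, $\oplus_2$ applied to a square and a reflexive singleton really do coincide with the stated products of preorders, keeping straight which coordinate carries the ``cluster over a point'' order and which carries the bare cluster, and confirming that in a product of two preorders every cluster is a rectangle (already used in Proposition~\ref{prop:presym}) so that the appended tack point sits correctly. Once those isomorphisms are pinned down, everything else is immediate from the definition of the product logic and from $\h(F)\le 2\iff F\models\bh_2$ for preorders, together with $\LS5$ being the logic of clusters.
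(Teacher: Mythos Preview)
Your Step 1 contains a genuine error: the tack frames are \emph{not} product frames, so the claimed isomorphisms fail already on cardinality grounds. The frame $\TF_{12}(m)=\rect{m}{m}\oplus\sngl$ has $m^2+1$ points, while $(m+1,\le')\times(m+1,\le')$ has $(m+1)^2$ points. Likewise $\TF_1(m)$ has $m^2+1$ points but $(m+1,\le')\times(m,\nabla_m)$ has $m^2+m$ points. Beyond the counting, there is a structural obstruction: in any product $F\times G$, every $R_2$-fiber $\{a\}\times Y$ has the same size $|Y|$, whereas in $\TF_1(m)$ the top point is an $R_2$-singleton while the bottom $R_2$-fibers have $m$ points. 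Similarly, in $\TF_{12}(m)$ the top point would have to share its ``second coordinate'' with every $(i,j)$ in the bottom cluster (since $(i,j)R_1 t$ forces equal second coordinates in a product), which is impossible for $m\ge 2$. So the ``routine finite verification'' you allude to cannot succeed.

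Because Step 2 relies on the same identifications, it also collapses. The paper's argument goes the other way: rather than realizing each $\TF_\bullet(m)$ as a product, it fixes a single \emph{infinite} product frame---the unimodal tack $F=(\omega+1,R)$ (an $\omega$-cluster with a top singleton) crossed with itself, or with $\boldsymbol{\omega}$---and exhibits p-morphisms $f_m$ from $F\times F$ onto $\TF_{12}(m)$ (and from $F\times\boldsymbol{\omega}$, $\boldsymbol{\omega}\times F$ onto $\TF_1(m)$, $\TF_2(m)$). Since $F$ is a preorder of height $2$ and $\boldsymbol{\omega}$ validates $\LS5$, these products lie in the relevant product-frame classes, and p-morphisms transfer validity downward. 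The key point is that a p-morphic image of a product need not be a product, which is exactly what lets the tack frames inherit $\LPN$ and the finer product logics.
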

\begin{proof}
%\IS{Another version}
Let $F=(\omega+1,R)$, where \(aR b\) iff \(a \leq \omega\) or \(b =\omega \)
(that is, $F$ is a  countable cluster endowed with the top singleton, the {\em unimodal tack frame}).
Clearly, $F$ is a preorder of height 2. 

To see that 
%\(\LPN \subseteq \TL_{12},\)
$\LS{4}[2]\times \LS{4}[2]\subseteq \TL_{12}$, 
  consider the family of p-morphisms 
 \(f_m:\:F  \times F  \toto \TF_{12}(m)\) such that 
 $f_m$ maps $\omega\times \omega$ onto $\rect{m}{m}$, and both $(\omega+1)\times\{\omega\}$ and \(\{\omega\} \times (\omega + 1)\) to \(\Top\); 
 it is straightforward that the p-morphism conditions hold. 
  
%To see that \(\LPN \subseteq \TL_{1}\) and \(\LPN \subseteq \TL_{2}\),
For two other inclusions, 
observe that restrictions of $f_m$ to $F\times \boldsymbol{\omega}$ 
and $\boldsymbol{\omega}\times F$ give p-morphisms onto 
$\TF_{1}(m)$ and $\TF_{2}(m)$, respectively. 
 \hide{

\bigskip 
  \IS{$\TL_{1}$ - our paper; $\TL_{2}$ - symmetric; $\TL_{12}$ - explain how.}
  To see that \(\LPN \subseteq \TL_{12},\)
  consider the family of product frames \(\{F_m\times F_m\mid m < \omega\}\), where \(F_m = (2m,R_m)\) and \(aR_m b\) iff \(a < m\) or \(b \ge m\).
  Clearly, \(F_m\times F_m\) is prenoetherian.
  We define the p-morphisms \(f_m:\:F_m \times F_m \toto \TF_{12}(m)\) by:
  \[
    f_m(a,b) = \begin{cases}
        (a,b), & \text{if }a < m \text{ and }b < m;
        \\
        0\text{ (the top singleton)}, & \text{otherwise}.
    \end{cases}
  \]
  Then \(\LPN \subseteq \Log \{F_m \times F_m \mid m < \omega\} \subseteq \Log \{\TF_{12}(m) \mid m < \omega\} = \TL_{12}\).
  }
\end{proof}

% \begin{lemma}\label{lem:sum_pmorphisms}
%   Let~\(F_1,\,F_2,\,G_1,\,G_2\) be bimodal Kripke frames such that~\(F_1 \toto G_1\) and~\(F_2 \toto G_2.\)
%   Then~\(F_1 * F_2 \toto G_1 * G_2\) holds for~\({*} \in \{\oplus_1,\,\oplus_2,\,\oplus\}\).
% \end{lemma}
% \VS{I couldn't find a reference} \IS{It should be \cite[Proposition 3.4]{AiML2018-sums}. In the second version, we can add the proof, it is quite immediate.}

% \IS{I suggest:}
The proof of the  following fact is  straightforward (or can be obtained as a particular case of \cite[Proposition 3.4]{AiML2018-sums}). 
\begin{lemma}\label{lem:sum_pmorphisms-f}
  Let~\(F_1,\,F_2,\,G_1,\,G_2\) be disjoint bimodal Kripke frames such that~\(f:F_1 \toto F_2\) and~\(g:G_1 \toto G_2.\)  
  Then~\(f\cup g:F_1 * G_1 \toto F_2 * G_2\) holds for~\({*} \in \{\oplus_1,\,\oplus_2,\,\oplus\}\).
\end{lemma}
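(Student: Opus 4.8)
The plan is a direct verification of the p-morphism conditions. Since all four frames are Kripke frames, the admissibility clause is automatic, so it suffices to check \textbf{(forth)} and \textbf{(back)} for each of the two modalities $\Di_1,\Di_2$. I write $F_i=(X_i,R^{F_i}_1,R^{F_i}_2)$ and $G_i=(Y_i,R^{G_i}_1,R^{G_i}_2)$ for $i\in\{1,2\}$, assume $X_1,X_2,Y_1,Y_2$ are pairwise disjoint (as is needed for the sums to be defined), and set $h=f\cup g$; since $f\colon X_1\to X_2$ and $g\colon Y_1\to Y_2$ are onto, $h$ is a surjection $X_1\cup Y_1\to X_2\cup Y_2$. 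I would treat the three operations $\oplus,\oplus_1,\oplus_2$ at once by isolating the only feature that matters: for a fixed modality $\Di_j$, the $\Di_j$-relation of $F_1*G_1$ is either $R^{F_1}_j\cup R^{G_1}_j$ or $R^{F_1}_j\cup R^{G_1}_j\cup(X_1\times Y_1)$, and in the latter situation the $\Di_j$-relation of $F_2*G_2$ likewise contains $X_2\times Y_2$; moreover an ordered sum has no arrow from its right summand to its left one.

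For \textbf{(forth)}, let $a,b$ be $\Di_j$-related in $F_1*G_1$. If $a,b\in X_1$, they are $\Di_j$-related in $F_1$, so $f(a),f(b)$ are $\Di_j$-related in $F_2$ by (forth) for $f$, hence $h(a),h(b)$ are $\Di_j$-related in $F_2*G_2$; the case $a,b\in Y_1$ is symmetric via $g$. Since $R^{F_1}_j\subseteq X_1\times X_1$ and $R^{G_1}_j\subseteq Y_1\times Y_1$, the only remaining possibility is $a\in X_1$, $b\in Y_1$ with $(a,b)\in X_1\times Y_1$, which forces $X_1\times Y_1$ to lie in the $\Di_j$-relation of $F_1*G_1$; then $X_2\times Y_2$ lies in the $\Di_j$-relation of $F_2*G_2$, and since $h(a)\in X_2$, $h(b)\in Y_2$, we are done.

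For \textbf{(back)}, suppose $h(a)$ is $\Di_j$-related to a point $d$ of $F_2*G_2$. If $a\in Y_1$, then $h(a)=g(a)\in Y_2$, and since no arrow leaves $Y_2$ towards $X_2$ we must have $d\in Y_2$ with $g(a),d$ $\Di_j$-related in $G_2$; then (back) for $g$ gives $b\in Y_1$ with $a,b$ $\Di_j$-related in $G_1$ (hence in $F_1*G_1$) and $g(b)=d$, and this $b$ works. If $a\in X_1$ and $d\in X_2$, then $f(a),d$ are $\Di_j$-related in $F_2$ and (back) for $f$ supplies the required $b\in X_1$. Finally, if $a\in X_1$ and $d\in Y_2$, then $(f(a),d)\in X_2\times Y_2$ must lie in the $\Di_j$-relation of $F_2*G_2$, so the $\Di_j$-relation of $F_1*G_1$ contains $X_1\times Y_1$; picking any $b\in Y_1$ with $g(b)=d$ (surjectivity of $g$) gives $a,b$ $\Di_j$-related in $F_1*G_1$ with $h(b)=d$.

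None of this is genuinely hard — the lemma is essentially bookkeeping. The only point that deserves attention is the uniform treatment of the three sums: one must note that adjoining a cross-product to a modality's relation on the domain side is matched by the same adjunction on the codomain side (immediate from the definitions of $\oplus,\oplus_1,\oplus_2$), and that ordered sums are one-directional, so the (back) obligation for a point of the upper summand never reaches into the lower summand. Alternatively, I could simply cite \cite[Proposition~3.4]{AiML2018-sums} on p-morphisms between generalized sums and observe that $\oplus,\oplus_1,\oplus_2$ are instances of that construction.
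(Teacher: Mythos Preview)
Your proof is correct and matches the paper's treatment: the paper does not spell out an argument but simply calls the lemma ``straightforward (or can be obtained as a particular case of \cite[Proposition~3.4]{AiML2018-sums})'', and you supply exactly that direct verification together with the same citation. Nothing to add.
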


% \begin{lemma}\label{lem:depth_in_gen_can_fr}
%   Let~\(L\) be a logic of finite depth and let~\(G\) be its~\(k\)-canonical frame for some \(k\in \omega.\)
%   Then for any~\(n \in \omega,\) the set \(\{a\in \dom G \mid \d(a) \le n\}\) belongs to the algebra of~\(G.\)
%   \IS{I am not sure about this statement. We need local finiteness for this.}
%   \VS{We might not need it at all, see my remark on Proposition~\ref{prop:S4N_rich_cluster}.}
% \end{lemma}

\improve{\ISLater{Is it true that this theorem holds for an axiomatic version of $\LPN$? Commutator of two $\LS{4}$+$\csym$?}}
\begin{theorem}\label{thm:finite_height_cover}
  Let~\(L \supseteq \LPN\) be a bimodal logic of finite height.
  Then one of the following is true:
  \begin{enumerate}[(a)]
    \item \(L\) is locally tabular;
    \item \(L \subseteq \TL_{12}\);
    \item \(L \subseteq \TL_2\);
    \item \(L \subseteq \TL_1\);
    \item \(L \subseteq \LS5^2\).
  \end{enumerate}
\end{theorem}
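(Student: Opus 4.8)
The plan is to reduce everything to the structural information provided by Proposition~\ref{prop:S4N_rich_cluster}, and then to glue together a p-morphism onto one of the tack frames using Lemma~\ref{lem:sum_pmorphisms-f}. Suppose $L$ is not locally tabular; we must show $L$ is contained in one of $\TL_{12}, \TL_2, \TL_1, \LS5^2$. First I would invoke Proposition~\ref{prop:S4N_rich_cluster} to obtain $k<\omega$, the $k$-canonical general frame $G = G_{L,k}$, and an infinite cluster $C$ in $G$ such that $C$ lies in the algebra of $\gen{G}{C}$, the underlying Kripke frame of $G\restr C$ validates $\LS5^2$, and $\Log(G\restr C) = \LS5^2$. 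Write $G' = \gen{G}{C}$, let $\tau$ be the formula defining $C$ inside $G'$ (from Theorem~\ref{thm:big_cluster}\ref{item-big_cluster-2}, which underlies Proposition~\ref{prop:S4N_rich_cluster}), and let $D = \dom G' \setminus C$; by Theorem~\ref{thm:big_cluster}\ref{item-big_cluster-1} this complement is finite. Since $L = \Log\{G_{L,j}\mid j<\omega\}$, we have $L \subseteq \Log G \subseteq \Log G'$, so it suffices to bound $\Log G'$.

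Next I would analyse the position of the finite set $D$ relative to the infinite cluster $C$ in the $\LS4^2$-frame $\kripke{G'}$. Because $C$ is the root cluster of $G'$ and $G'$ has height $\le h$, every point of $D$ is strictly above $C$ in the skeleton; since the skeleton is finite (as $D$ is finite) and $C$ is its least element, $\kripke{G'}$ looks like $\kripke{G\restr C}$ with a finite partial order of clusters stacked on top. The key case for pre-local tabularity is when $D$ collapses, under a p-morphism, to the single reflexive point $\sngl$: I would argue that the Jankov--Fine/definability machinery lets us build, for each finite $n$, a p-morphism from $G'$ onto one of $\TF_{12}(n), \TF_1(n), \TF_2(n)$. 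Concretely, since $\Log(G\restr C) = \LS5^2$, Proposition~\ref{prop:rich_pmorphism} gives $p$-morphisms $G\restr C \toto \rect{n}{n}$ for all $n$; on the finite part $D$ one checks which of the relations $R_1, R_2$ connect $C$ upward to $D$ and within $D$, and this determines which tack-sum $\oplus, \oplus_1, \oplus_2$ (or the degenerate case $D=\emptyset$, giving $\LS5^2$) is appropriate. Then Lemma~\ref{lem:sum_pmorphisms-f} assembles the $p$-morphism on $G\restr C$ with the collapse of the finite top part to $\sngl$ into $G' \toto \TF_{*}(n)$, whence $\Log G' \subseteq \Log\{\TF_*(n)\mid n<\omega\}$, i.e.\ $L$ is contained in the corresponding tack logic. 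The admissibility condition for the p-morphism of general frames is handled using that $C$ and the points of $D$ are definable (Theorem~\ref{thm:big_cluster}\ref{item-big_cluster-2}, Proposition~\ref{prop:definable-point}), so preimages of admissible sets are admissible.

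The main obstacle I expect is showing that the finite top part $D$ can always be collapsed onto $\sngl$ in a way compatible with the $\LS5^2$-structure below and with the two relations $R_1,R_2$ separately — in other words, that the "shape" of $D$ over $C$ is forced to be essentially one of the three tack configurations rather than something genuinely two-dimensional. Here one must use that $L$ is not locally tabular together with the fact (Proposition~\ref{prop:four-above-PN}) that each tack logic sits above a product of the form $\LS4[2]\times\LS4[2]$, $\LS4[2]\times\LS5$, or $\LS5\times\LS4[2]$: if $D$ had a richer structure the logic $\Log G'$ would still have to be non-locally tabular, and one would need a Segerberg--Maksimova-style argument (via Proposition~\ref{prop:LT_implies_rp} and $\bh$-type formulas on each reduct) to show that the only non-locally tabular possibilities for $\Log G'$ are the four listed logics. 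Carrying out this case analysis cleanly — in particular ruling out configurations where $D$ interacts with $C$ asymmetrically in both coordinates at once — is the delicate part; everything else is routine gluing and canonical-frame bookkeeping.
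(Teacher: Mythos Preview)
Your outline is essentially the paper's, but you have misidentified where the real work lies and are proposing an unnecessarily complicated detour. The internal structure of $D$ is \emph{irrelevant}: the map $f$ that is the identity on $C$ and collapses all of $X\setminus C$ to the top singleton is always the right map, and the forth condition holds automatically (points of $D$ never see back into $C$, since $C$ is the root cluster). No Segerberg--Maksimova-style analysis of the ``shape'' of $D$ is needed, and the suggestion that one must argue $\Log G'$ non-locally tabular forces a particular configuration is a wrong turn.

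The only genuine issue is the \emph{back} condition for $f$: if the target frame has an $R_i$-edge from the bottom cluster to the top singleton, then every $a\in C$ must see some $b\in D$ via $R_i$. The paper handles this with a short lemma you do not mention: call $C$ \emph{$i$-fruitful} if some $a\in C$ sees some $b\notin C$ via $R_i$; then using the $\LS5^2$-structure on $C$ (so $S_1\circ S_2$ is universal and $S_2$ is symmetric) together with the Church--Rosser property of $\kripke{G'}$, one shows that $i$-fruitfulness implies \emph{every} point of $C$ has an $R_i$-successor outside $C$. This is a three-line diagram chase. With this in hand the case split is simply the four combinations of $1$-fruitful/$2$-fruitful, giving $\oplus$, $\oplus_1$, $\oplus_2$, or $X=C$ respectively, and then Proposition~\ref{prop:rich_pmorphism} plus Lemma~\ref{lem:sum_pmorphisms-f} finish exactly as you describe. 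Admissibility is immediate from $C\in A$, since every preimage is either $X\setminus C$ or the intersection with $C$ of something already in the algebra of $G\restr C$.
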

\begin{proof}
  Assume that~\(L\) is not locally tabular. 
  Let $k$ and $C$ be as described in Proposition \ref{prop:S4N_rich_cluster}. %\ISLater{Merge them all.}
  We denote~\(\gen{G_{L,k}}{C}\) by~\(G=(X,R_1,R_2,A).\) Let also $S_i$ be the restriction of 
  $R_i$ to $C$.

  \hide{
  Then by Proposition~\ref{prop:S4N_rich_cluster} the \(k\)-canonical general frame \(G_{L,k}\) contains a rich~\(\LS5^2\)-frame \(G_{L,k}\restr C\), where \(C\) belongs to the algebra of \(\gen{G_{L,k}}{C}\).
  We denote~\(\gen{G_{L,k}}{C}\) by~\(G=(X,R_1,R_2,A).\)
  }

  We claim that for $i=1,2$, we have 
  \begin{equation}\label{eq:back-for-tack}
  \EE a\in C \, \EE b\in X{\setminus}C\, (aR_ib)\; \Rightarrow \; \AA a\in C\, \EE b\in X{\setminus}C \,(a R_i b)
      %\text{If $a R_i b$ for some $a\in C$, $b\in X\setminus C$,            }
  \end{equation}
  We consider the case $i=1$, the case $i=2$ is analogous. 
  Assume $a_0 R_1 b_0$ for some 
   $a_0\in C$ and $b_0\in X{\setminus}C$, and let $a\in C$. We have $(C,S_1,S_2)\mo \LS{5}^2$, 
   so $S_1\circ S_2$ is universal on $C$ and so  
   $a S_1 c  S_2 a_0$ for some $c\in C$. We have $a_0 S_2 c$, since $S_2$ is symmetric. 
   Since $(X,R_1,R_2)$ satisfies the Church-Rosser property, %\ISLater{Explained?} 
   from $a_0R_2 c$ and $a_0 R_1 b_0$,\improve{\ISLater{bad for eyes} }
   for some $b$ we have $cR_1b$ and $b_0 R_2 b$. 
   The latter implies that $b$ is not in $C$. From the former and $aS_1 c$ we get $a R_1 b$, which proves the claim.
  
  Let $f$ be the function on $X$ defined on \(C\) as the identity and mapping \(X {\setminus} C\) to $\Top$.  
  %\VS{Unclear what is the codomain of \(f\) and where is \(0\)}
  %\IS{Use Top!}

  We consider four cases. The cluster  $C$ is said to be {\em $i$-fruitful}, if the left-hand part of  \eqref{eq:back-for-tack} holds. 

\smallskip

  \case{1}:  $C$ is  1-fruitful and 2-fruitful, that is  
  $$
  \EE a\in C \, \EE b\in X{\setminus}C\, aR_1b, \text{ and }
  \EE c\in C \, \EE d\in X{\setminus}C\, cR_2d. 
  $$
  %\IS{This is bad for eyes. We need to give  names to cases. Something like: 

  We claim that in this case $L$ is contained in $\TL_{12}$. For this, let \(F = (\kripke G\restr C) \oplus \sngl\). It follows that \(f:\:\kripke G \toto F\):  the forth condition is straightforward, and the back condition follows from \eqref{eq:back-for-tack}.\ISLater{more details. DC!}
  
Let~\(m < \omega\).
  Since~$\Log (G\restr C)=\LS{5}^2$, by Proposition~\ref{prop:rich_pmorphism} there exists \(g_0:G\restr C \toto \rect{m}{m}\). 
  %\IS{We need to fix this morphism, so I gave it a name.}
  % It is straightforward that~\(G\restr(X\setminus C) \toto \sngl.\)
  By Lemma~\ref{lem:sum_pmorphisms-f}, $g_0$ extends to the p-morphism~\(g:\:F \toto \TF_{12}(m)\), which maps the top singleton of~\(F\) to the one of~\(\TF_{12}(m)\).
  It follows that~\(g\circ f\) is a p-morphism \(\kripke G \toto \TF_{12}(m)\). In fact, 
  \(g\circ f:\:G\toto \TF_{12}(m)\):
  for the admissibility condition, it suffices to consider the one-element subsets of the domain of~\(\TF_{12}(m).\)
  The preimage of the top singleton is \(X\setminus C,\) which belongs to \(A\) since \(C\in A.\)
  For any point of the bottom cluster of~\(\TF_{12}(m),\) its preimage belongs to the algebra of~\(G\restr C,\) and hence to \(A\) since \(C\in A.\)
  Therefore, \(g\circ f:\:G\toto \TF_{12}(m)\). 
  Since \(m\) was arbitrary, we have~
  $$L \subseteq \Log G \subseteq \Log\{\TF_{12}(m) \mid m < \omega\} = \TL_{12}.$$

\smallskip 
  \case{2}:  $C$ is 1-fruitful, but not 2-fruitful.

%  $\EE a\in C \, \EE b\in X{\setminus}C\, aR_1b$  and $cR_2d$ for no  $c\in C$ and   $d\in X{\setminus}C$.  
  
  In this case,  let $F= (\kripke G\restr C){\oplus_1}\sngl$. Same reasoning as before shows that \(f:\:\kripke G \toto F\), and 
  \(L \subseteq \Log G \subseteq \Log\{\TF_{1}(m) \mid m < \omega\} = \TL_{1}.\)

\smallskip
  \case{3}: $C$ is 2-fruitful, but not 1-fruitful.  
  
  Symmetric to the previous case, which gives \(L \subseteq \TL_{2}.\)

\smallskip
\case{4}: $C$ is neither 1-fruitful  nor 2-fruitful.
%$aR_2b$ for no  $a\in C$ and 
 % $b\in X{\setminus}C$,  and $cR_2d$ for no  $c\in C$ and 
 % $d\in X{\setminus}C$.  

  In this case, $X=C$, and we have $L\subseteq \Log G = \LS{5}^2$. 
\hide{

  % Observe that~\(\kripke G = \kripke G\restr C \oplus \kripke G \restr (X\setminus  C)\) \IS{This needs details. Some interplay between commutativity and p-morphisms.}
  Fix an arbitrary~\(m < \omega\).
  Since~\(G\restr C\) is a rich~\(\LS5^2\)-frame,~\(G\restr C \toto \rect{m}{m}\) by Proposition~\ref{prop:rich_pmorphism}.
  % It is straightforward that~\(G\restr(X\setminus C) \toto \sngl.\)
  Then by Lemma~\ref{lem:sum_pmorphisms-f}, there exists a p-morphism~\(g:\:F \toto \TF_{12}(m)\), which maps the top singleton of~\(F\) to the one of~\(\TF_{12}(m)\).
  It follows that~\(g\circ f\) is a p-morphism \(\kripke G \toto \TF_{12}(m)\).
  For the admissibility condition, it suffices to consider the one-element subsets of the domain of~\(\TF_{12}(m).\)
  The preimage of the top singleton is \(X\setminus C,\) which belongs to \(A\) since \(C\in A.\)
  For any point of the bottom cluster of~\(\TF_{12}(m),\) its preimage belongs to the algebra of~\(G\restr C,\) and hence to \(A\) since \(C\in A.\)
  Then~\(g\circ f:\:G\toto \TF_{12}(m)\).
  Since \(m\) was arbitrary, we have~\(L \subseteq \Log G \subseteq \Log\{\TF_{12}(m) \mid m < \omega\} = \TL_{12}.\)
  % \IS{You need to specify morphisms; I suggest Lemma \ref{lem:sum_pmorphisms-f}}.
  % The admissibility condition also holds since it holds for the p-morphisms~\(G\restr C \toto \rect{\omega}{\omega}\) and~\(G\restr(X\setminus C) \toto \sngl.\)\IS{Perhaps, say more about admissibility.}

  % It is trivial that~\(f\) is monotone in both relations on~\(C\) and on~\(X \setminus  C\).
  % Both~\(R_1\) and~\(R_2\) contain no pairs of points from~\((X\setminus C) \times  C.\)
  % Thus it remains to consider the pairs~\((a,b)\) with~\(a \in C\) and~\(b\in X\setminus C.\)
  % For any~\(a\in C\) and~\(b\in R_1(a)\setminus C,\) we have~\(f(a) S_1 z = f(b)\) by the construction of~\(\TF_{12}\).
  % The same applies for~\(R_2.\)
  % Since~\(C\) is a cluster,~\(a R_2 b\) for no~\(a\in C\) and~\(b\in X\setminus  C.\)

  % To show that~\(f\) has the back property, let us first consider~\(a \in X\setminus C\).
  % In this case,~\(f(a) = z\) and~\(R_1(z) = R_2(z) = \{z\},\) so we have the required condition since~\(a R_1 a\) and~\(a R_2 a.\)
  % If~\(a\in C\), then \(S_1(f(a)) = E_a \cup \{z\},\) where~\(E_a \subseteq \omega\times \omega.\)
  % For any~\(u\in E_a,\) the back condition for the first relation holds since~\(f\restr C\) is defined to be a p-morphism onto~\(\rect{\omega}{\omega};\) for~\(z\), recall that~\((X,R_1)\) has height at least~\(2,\) so~\(a R_1 b\) for some~\(b\in X\setminus  C,\) thus~\(f(b) = z.\) The same applies for the second relation. We conclude that~\(f\) is a p-morphism and therefore~\(L \subseteq \Log G \subseteq \TL_{12}\).

  In the case where the height of~\((X,R_1)\) is at least~\(2\) and the height of~\((X,R_2)\) is~\(1,\)
  % the function~\(f:\:X\to \omega\times \omega\cup \{z\}\) that we defined earlier is a p-morphism~\(G \toto \TF_1.\)
  % The proof is analogous.
  an analogous argument shows that~\(G \toto \TF_1(m)\) for all \(m < \omega.\)
  Similarly, if~\(\h(X,R_1)=1\) and~\(\h(X,R_2)\ge 2,\) then \(f:\:G\toto \TF_2(m).\)
  Finally, if~\(\h(X,R_1) = \h(X,R_2) = 1,\) then~\(G\) is a rich~\(\LS5^2\)-cluster, so~\(L \subseteq \LS5^2.\)
  }
\end{proof}

\subsection{Corollaries} 

\hide{

\begin{corollary}\label{cor:S4hxS4l_cover}
  For any bimodal logic~\(L \supseteq \LS4[h]\times \LS4[l]\), where~\(h,\,l < \omega,\)
  one of the following is true:
  \begin{enumerate}
    \item \(L\) is locally tabular;
    \item \(L \subseteq \TL_{12}\);
    \item \(L \subseteq \TL_2\);
    \item \(L \subseteq \TL_1\);
    \item \(L \subseteq \LS5^2\).
  \end{enumerate}
\end{corollary}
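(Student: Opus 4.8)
The plan is to read the corollary straight off Theorem~\ref{thm:finite_height_cover}: it suffices to show that every bimodal logic $L \supseteq \LS4[h]\times\LS4[l]$ satisfies the two hypotheses of that theorem, namely $L \supseteq \LPN$ and $L$ is of finite height. Both already hold for $\LS4[h]\times\LS4[l]$ itself, and each is trivially inherited by any extension (containment of $\LPN$ is immediate; finite height persists because $\rep{\bh_n}{\DiM}$, once in the smaller logic, stays in the larger one). Granting this, Theorem~\ref{thm:finite_height_cover} applied to $L$ produces exactly the five alternatives (1)--(5).

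The inclusion $\LS4[h]\times\LS4[l]\supseteq\LPN$ is recorded in the Example following the definition of $\LPN$, and the reason is as follows. A Kripke frame validates $\LS4[h]$ iff it is a preorder of height $\le h$ (by \eqref{eq:fin-ht-trans}), and similarly for $\LS4[l]$. For preorders $F$ and $G$ the clusters of $F\times G$ are products of a cluster of $F$ with a cluster of $G$, so the skeleton of $F\times G$ is the product poset of the skeletons of $F$ and $G$; a chain in a product of posets of heights $a$ and $b$ has at most $a+b-1$ elements, so every such $F\times G$ has height $\le h+l-1$ and in particular contains no infinite ascending chain, i.e.\ is prenoetherian. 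Hence $\{F\times G\mid F\models\LS4[h],\,G\models\LS4[l]\}\subseteq\Noe$, so $\LPN=\Log\Noe\subseteq\Log\{F\times G\mid F\models\LS4[h],\,G\models\LS4[l]\}=\LS4[h]\times\LS4[l]$, and therefore $L\supseteq\LPN$.

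For finite height: over $\LS4^2$-frames the modality $\DiM$ is interpreted along the preorder $R_1\circ R_2$, and on a product $F\times G$ one has $(R_1\cup R_2)^* = R_1\circ R_2$ by commutativity, so this preorder induces the same skeleton as above, of height $\le h+l-1$. By \eqref{eq:fin-ht-trans}, $(X,R_1\circ R_2)\models\bh_{h+l-1}$ for every such frame, whence $\rep{\bh_{h+l-1}}{\DiM}\in\LS4[h]\times\LS4[l]\subseteq L$; since $L\supseteq\LS4^2$ is $2$-transitive (hence pretransitive), this is precisely the statement that $L$ is of finite height. Theorem~\ref{thm:finite_height_cover} then gives the claim. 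There is no genuine obstacle; the only point deserving a line of justification is the elementary bound on the height of a product of two finite-height posets, obtained by counting the strict increases along a chain in each coordinate.
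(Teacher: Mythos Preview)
Your proof is correct and follows the same route the paper intends: the corollary is an immediate consequence of Theorem~\ref{thm:finite_height_cover}, once one checks that $\LS4[h]\times\LS4[l]\supseteq\LPN$ (recorded in the paper's Example without proof) and that it has finite height. You simply spell out these two verifications in detail where the paper leaves them implicit; the height bound $h+l-1$ via counting strict coordinate increases is the standard argument and is exactly what is needed.
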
}

\begin{corollary}\label{cor:tacks_preLT}
  The logics \(\TL_{12}\), \(\TL_2\), and \(\TL_1\) are pre-locally tabular.
\end{corollary}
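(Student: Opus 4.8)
The plan is to show that each of the three tack logics $\TL_{12}$, $\TL_2$, $\TL_1$ is (i) not locally tabular and (ii) has every proper extension locally tabular. The failure of local tabularity is the easy half: each tack logic is contained in (indeed equal to) the logic of a single frame built on $\rect{\omega}{\omega}$ with a top singleton, and one checks by Proposition~\ref{prop:LT_implies_rp} (or directly) that the two-variable fragment is infinite — e.g.\ the formulas $\Di_1^0(p\wedge\Di_2 q)$, or more simply the fact that $\rect{\omega}{\omega}$ already refutes $\rp_m(\Div)$ for every $m$, so no $\rp_m$ lies in the logic and hence it cannot be locally tabular. So the content is entirely in (ii).

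For (ii), fix one of these logics, say $\TL = \TL_{12}$, and let $L' \supsetneq \TL$. The first step is to verify that $L' \supseteq \LPN$ and $L'$ has finite height, so that Theorem~\ref{thm:finite_height_cover} applies to $L'$. Containment $\LPN \subseteq L'$ follows from $\LPN \subseteq \TL \subsetneq L'$ (Proposition~\ref{prop:four-above-PN}). Finite height: since $\TF_{12}(m)$ has height $2$, $\TL \ni \rep{\bh_2}{\DiM}$ (using \eqref{eq:fin-ht-trans} applied to the preorder $(X,R_1\cup R_2)$ of each tack frame, whose height is $2$), hence $L'$ also has finite height. Now Theorem~\ref{thm:finite_height_cover} gives that $L'$ is locally tabular, or $L' \subseteq \TL_{12}$, or $L'\subseteq\TL_2$, or $L'\subseteq\TL_1$, or $L'\subseteq\LS5^2$. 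We must rule out all the non-locally-tabular alternatives. The case $L'\subseteq\TL_{12}$ is excluded because $\TL \subsetneq L' \subseteq \TL_{12} = \TL$ is a contradiction. The remaining job is to show $\TL_{12}$ is not contained in $\TL_2$, in $\TL_1$, or in $\LS5^2$ (and symmetrically for the other two tack logics: $\TL_2 \not\subseteq \TL_{12},\TL_1,\LS5^2$, etc.).

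The key step, then, is separating the four pre-local-tabularity candidates by exhibiting, for each ordered pair, a formula in one logic refuted by a frame of the other. For $\TL_i \not\subseteq \LS5^2$: a tack frame has height $2$, so $\TL_i$ does not contain $\bh_1(\DiM)$ while $\LS5^2$ does (squares have height $1$) — wait, rather $\rep{\bh_1}{\DiM}\in\LS5^2\setminus\TL_i$. For $\TL_{12}$ versus $\TL_1$ and $\TL_2$: in $\TF_{12}(m)$ the top singleton is a proper $R_1$- and $R_2$-successor of every point of the base, whereas in $\TF_1(m)$ it is an $R_1$-successor but not an $R_2$-successor (and dually for $\TF_2(m)$). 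This asymmetry is captured by a simple formula: something like $\psi_1 := \Di_1\top' \to \Di_2\top'$ where $\top'$ picks out the top — more concretely, a formula expressing ``every point sees, via $R_2$, a point with no proper $R_1\cup R_2$-successor'' holds on $\TF_{12}(m)$ but fails on $\TF_1(m)$. One writes such a formula using $\DiM$, $\Box_2$ and the bounded-depth idioms and checks validity on $\TF_{12}(m)$ for all $m$ while refuting it on some $\TF_1(m)$ (any $m\ge1$ suffices). Symmetric formulas handle the remaining pairs. I expect this separation of the four logics — choosing the right small formulas and verifying validity on the whole family $\{\TF_{\bullet}(m)\mid m<\omega\}$ — to be the main (though still routine) obstacle; once the six/nine non-inclusions are in hand, pre-local tabularity of each of $\TL_{12},\TL_2,\TL_1$ follows immediately from Theorem~\ref{thm:finite_height_cover} together with the non-local-tabularity observation.
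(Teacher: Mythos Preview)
Your overall strategy matches the paper's: each tack logic extends $\LPN$, has finite height $2$, and fails $\rp_m(\Div)$ for every $m$ (hence is not locally tabular); then Theorem~\ref{thm:finite_height_cover} applied to any proper extension $L'$ forces $L'$ to be locally tabular, provided one shows the relevant non-inclusions among $\TL_{12},\TL_1,\TL_2,\LS5^2$. So far so good.

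There is, however, a genuine slip in your separation argument. To rule out $L'\subseteq\LS5^2$ you need $\TL_i\not\subseteq\LS5^2$, i.e.\ a formula in $\TL_i$ that \emph{fails} in some $\LS5^2$-frame. What you exhibit is $\rep{\bh_1}{\DiM}\in\LS5^2\setminus\TL_i$, which only gives the opposite direction $\LS5^2\not\subseteq\TL_i$. The paper fixes this with the McKinsey formulas: $\Box_1\Di_1 p\to\Di_1\Box_1 p$ is valid in every $\TF_{12}(m)$ and every $\TF_1(m)$ (the top singleton witnesses $\Di_1\Box_1 p$ whenever $\Box_1\Di_1 p$ holds) but fails in $\rect{m}{m}$ for $m\geq 2$; symmetrically for $\Box_2\Di_2 p\to\Di_2\Box_2 p$. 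These two McKinsey formulas, together with the single-modality height bounds $\rep{\bh_1}{\Di_1}$ and $\rep{\bh_1}{\Di_2}$ (which belong to $\TL_2$ and $\TL_1$ respectively but not to $\TL_{12}$), already separate all four logics pairwise---much cleaner than trying to cook up formulas that ``pick out the top singleton''. Once you replace your vague separating formulas with these concrete ones and correct the direction for $\LS5^2$, your argument is complete and coincides with the paper's.
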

\begin{proof}
    Let \(L \in \{\TL_{12},\,\TL_2,\,\TL_1\}.\)
    Then \(L\) is an extension of~\(\LPN\) (Proposition \ref{prop:four-above-PN}), and \(L\) is not locally tabular by Proposition~\ref{prop:LT_implies_rp} since its frame class does not validate \(\rp_m(\Div)\) for each~\(m\).
    \VSLater{explain?}
    By Theorem~\ref{thm:finite_height_cover}, \(L\) is contained in one of~\(\TL_{12},\,\TL_2,\,\TL_1,\,\LS5^2\).
    
    It remains to observe that none of these logics is contained in another. Indeed, only \(\LS5^2\) contains \(\bh_1(\Di^*).\) \(\TL_1\) is distinguished by~\(\Box_1\Di_1 p \to \Di_1 \Box_1 p\) and \(\rep{\bh_1}{\Di_2}\), and \(\TL_2\) by~\(\Box_2\Di_2 p \to \Di_2 \Box_2 p\) and \(\rep{\bh_1}{\Di_1}.\) Finally, only \(\TL_{12}\) contains both \(\Box_1\Di_1 p \to \Di_1 \Box_1 p\) and~\(\Box_2\Di_2 p \to \Di_2 \Box_2 p\).
\end{proof}

\begin{corollary}\label{cor:S4hxS4l_cover}
  Every non-locally tabular %bimodal 
  extension of \(\LS4[h]\times \LS4[l]\), where~\(h,\,l < \omega,\) is contained 
  in a pre-locally tabular logic.
\end{corollary}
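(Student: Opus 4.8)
The plan is to derive this as an immediate consequence of Theorem~\ref{thm:finite_height_cover} together with Corollary~\ref{cor:tacks_preLT}. The only thing that genuinely needs checking is that every extension of \(\LS4[h]\times\LS4[l]\) satisfies the hypotheses of Theorem~\ref{thm:finite_height_cover}: that it extends \(\LPN\) and has finite height. Everything after that is bookkeeping.

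First I would record that \(\LS4[h]\times\LS4[l]\) extends \(\LPN\); this is noted in the example following the definition of \(\LPN\), and hence every \(L\supseteq\LS4[h]\times\LS4[l]\) extends \(\LPN\) as well. Next I would verify finite height. Let \(F\) be an \(\LS4[h]\)-frame and \(G\) an \(\LS4[l]\)-frame. In the product \(F\times G\) the relation \(R_1\cup R_2\) generates the preorder \(R_1\circ R_2 = R_2\circ R_1\) (the relations commute and are reflexive-transitive), which is just the product preorder on the domain; its skeleton is isomorphic to the product of the skeletons of \(F\) and \(G\), a poset of height at most \(h+l-1\). By~\eqref{eq:fin-ht-trans} this preorder validates \(\bh_{h+l-1}\), and since on \(\LS4^2\)-frames \(\DiM\) is interpreted by \(R_1\circ R_2\), we obtain \(F\times G\models\rep{\bh_{h+l-1}}{\DiM}\). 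Therefore \(\rep{\bh_{h+l-1}}{\DiM}\in\LS4[h]\times\LS4[l]\), so this logic, and every extension of it, has finite height.

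Now let \(L\supseteq\LS4[h]\times\LS4[l]\) be non-locally tabular. By the two observations above, \(L\) is a bimodal logic of finite height with \(L\supseteq\LPN\), so Theorem~\ref{thm:finite_height_cover} applies and gives \(L\subseteq\TL_{12}\), or \(L\subseteq\TL_2\), or \(L\subseteq\TL_1\), or \(L\subseteq\LS5^2\). The first three target logics are pre-locally tabular by Corollary~\ref{cor:tacks_preLT}, and \(\LS5^2\) is pre-locally tabular by~\cite{NickS5}. In every case \(L\) is contained in a pre-locally tabular logic, which is the assertion.

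I do not expect a substantive obstacle here; the statement is essentially a corollary of the already-established Theorem~\ref{thm:finite_height_cover}. The one point that requires a little care is the finite-height computation for the product, which rests on the standard fact that the skeleton of a product of two preorders is the product of their skeletons, so that the heights add (minus one); this is what places \(\LS4[h]\times\LS4[l]\) within the scope of Theorem~\ref{thm:finite_height_cover}.
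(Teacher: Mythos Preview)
Your proof is correct and follows exactly the approach the paper intends: the corollary is stated without proof there, as an immediate consequence of Theorem~\ref{thm:finite_height_cover} together with Corollary~\ref{cor:tacks_preLT} and the pre-local tabularity of $\LS5^2$ from~\cite{NickS5}. Your filling in of the finite-height check for $\LS4[h]\times\LS4[l]$ (via the product-of-skeletons observation giving height at most $h+l-1$) is the natural way to make explicit what the paper leaves implicit.
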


%\VS{RP criterion goes here after a small connecting paragraph} \IS{And what if $rpp$ will be %introduced  in the subsection of prelim?}\VS{OK, done}
%\IS{Do we have it for all prenoetherian?  It looks like the case to me: finite height+rpp} %\VS{Yes, finite height + rpp should work, I'll move the proof in the next sub}

In \cite{LTProductsArxiv}, we described a criterion of local tabularity for products of modal logics. In particular, it follows that for two extensions of $\LS{4}$, 
their product is locally tabular iff 
it is of finite height and contains a formula $\rp_m(\Div)$. 
Theorem \ref{thm:finite_height_cover} allows to obtain this criterion for the lattice of all extensions of $\LPN$.

\ISLater{Finite height of two factors $\leftrightarrow$ finite height of product. Do we have the same for the commutative case?}  
\begin{corollary}[The rpp-criterion of local tabularity above $\LPN$]\label{cor:rp_criterion}
  Let~\(L \supseteq \LPN\).
  Then~\(L\) is locally tabular iff~\(L\) contains \(\bh_n(\Di^*)\) and ~\(\rp_m(\Div)\)  for some~\(n,m < \omega\).
\end{corollary}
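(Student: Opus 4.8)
The plan is to derive Corollary~\ref{cor:rp_criterion} directly from Theorem~\ref{thm:finite_height_cover}, using the two necessary conditions for local tabularity already established (Propositions~\ref{prop:LT_implies_bh} and~\ref{prop:LT_implies_rp}).

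First I would handle the easy direction. Suppose $L\supseteq\LPN$ is locally tabular. Then $L$ is $1$-finite, so by Proposition~\ref{prop:LT_implies_bh} it is pretransitive with $\rep{\bh_n}{\DiM}\in L$ for some $n<\omega$; since above $\LS4^2$ the pretransitive diamond $\DiM$ is exactly $\Di^*$ (i.e. $\varphi\lor\Di\varphi\lor\Di^2\varphi$, as noted at the start of Section~\ref{sec:PN}), this gives $\bh_n(\Di^*)\in L$. And by Proposition~\ref{prop:LT_implies_rp}, $\rp_m(\Div)\in L$ for some $m<\omega$. This proves the ``only if'' part, and it does not even use Theorem~\ref{thm:finite_height_cover}.

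For the converse, assume $\bh_n(\Di^*)\in L$ and $\rp_m(\Div)\in L$. The formula $\bh_n(\Di^*)$ forces $L$ to have finite height: the underlying preorder $(X,(R_1\cup R_2)^*)$ of any $L$-frame — equivalently, modulo the remark that $\DiM$ collapses to the reachability preorder on $\LS4^2$-frames — satisfies the bounded-height condition \eqref{eq:fin-ht-trans}, so $\h(F)\le n$. Hence $L$ is a bimodal logic of finite height above $\LPN$, and Theorem~\ref{thm:finite_height_cover} applies: either $L$ is locally tabular (and we are done), or $L$ is contained in one of $\TL_{12}$, $\TL_2$, $\TL_1$, $\LS5^2$. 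The remaining task is to rule out the latter four alternatives using $\rp_m(\Div)\in L$. Each of these four logics is the logic of a class of tack frames (or of squares), and in each of them the $\Div$-reduct contains frames whose reachability relation is a cluster of size $>m+1$ plus possibly a top point; such a frame refutes $\FORP_m$ and hence $\rp_m(\Div)$. Concretely: $\TF_{12}(m{+}2)$, $\TF_1(m{+}2)$, $\TF_2(m{+}2)$ and $\rect{(m{+}2)}{(m{+}2)}$ all have a cluster of $(m{+}2)^2 > m+1$ pairwise $R_1\cup R_2$-related points, which violates $\FORP_m$; so $\rp_m(\Div)$ is refuted in each of these frames, and therefore $\rp_m(\Div)\notin\TL_{12},\TL_1,\TL_2,\LS5^2$. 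This contradicts $\rp_m(\Div)\in L\subseteq$ (one of those four logics). Hence the first alternative of Theorem~\ref{thm:finite_height_cover} must hold, i.e.\ $L$ is locally tabular.

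The only genuinely delicate point — and the step I would write most carefully — is the claim that $\bh_n(\Di^*)\in L$ really yields finite height of $L$-frames in the sense needed to invoke Theorem~\ref{thm:finite_height_cover}. This requires knowing that on $\LS4^2$-frames, truth of $\bh_n$ under the $\Di^*$-translation is equivalent to $\h(F)\le n$ for the skeleton of $(X,R_1\cup R_2)$; this is precisely the content of the definition of the height of a pretransitive logic together with \eqref{eq:fin-ht-trans} applied to the preorder $(X,(R_1\cup R_2)^*)$, observing that $\Di^*$ and the reachability diamond $\Div^{\,*}$ agree there. Everything else is a routine check that the four tack/square logics fail $\rp_m(\Div)$, which is immediate from the correspondence of $\rp_m$ with $\FORP_m$ and the presence of arbitrarily large clusters. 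I would also remark, as flagged in the text, that one may replace $\rp_m(\Div)$ by $\rp_m(\Di^*)$ since the relevant logics are pretransitive, but this is inessential for the statement as given.
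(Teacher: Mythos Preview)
Your overall strategy is exactly the paper's: the `only if' direction is Propositions~\ref{prop:LT_implies_bh} and~\ref{prop:LT_implies_rp}, and for the converse you use $\bh_n(\Di^*)$ to get finite height and then Theorem~\ref{thm:finite_height_cover}, ruling out the four non-locally tabular alternatives because none of them contains $\rp_m(\Div)$. The paper's own proof is in fact terser than yours and simply asserts that none of the four logics contains $\rp_m(\Div)$.

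However, your justification of that last point is wrong on both counts. First, the points of $\rect{(m{+}2)}{(m{+}2)}$ are \emph{not} pairwise $(R_1\cup R_2)$-related: $(a,b)(R_1\cup R_2)(c,d)$ holds iff $a=c$ or $b=d$, so e.g.\ $(0,0)$ and $(1,1)$ are unrelated. Second, even a genuinely universal cluster of arbitrary size \emph{satisfies} $\FORP_m$ for every $m\ge 1$, since then $x_0 R x_2$ always holds and the second disjunct is met; so ``large cluster'' alone cannot refute $\rp_m(\Div)$. The correct witness is the non-transitivity of $R_1\cup R_2$ on squares: in $\rect{n}{n}$ the staircase $(0,0),(0,1),(1,1),(1,2),(2,2),\dots$ is an $(R_1\cup R_2)$-path of distinct points in which no $x_i$ is $(R_1\cup R_2)$-related to any $x_{j+1}$ with $j>i$, since those two points differ in both coordinates. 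Taking $n$ large enough refutes $\FORP_m$, hence $\rp_m(\Div)$, in each of the tack frames and in $\rect{n}{n}$. With this fix your argument goes through and matches the paper's.
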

\begin{proof}
 The `only if' direction holds for all logics due to Propositions~\ref{prop:LT_implies_bh} and~\ref{prop:LT_implies_rp}.
 
  Assume that $L$ contains
  ~\(\bh_n(\Di^*)\)  and 
  \(\rp_m(\Div)\).
  None of four logics 
  described in Theorem \ref{thm:finite_height_cover} contains $\rp_m(\Div)$, and so 
  this theorem yields that $L$ is locally tabular.  
  %
  %If~\(L\) be not locally tabular, then~\(L\) is contained in one of~\(\fusion{\Grz3}{\Triv}\), \(\fusion{\Triv}{\Grz3}\), \(\TL_{12}\), \(\TL_1\), \(\TL_2\), and \(\LS5^2.\) Observe that the first two of these do not contain~\(\bh_n\) for any \(n,\) and the rest do not contain~\(\rp_m\) for any~\(m.\)
\end{proof}
\begin{remark}
Corollary \ref{cor:rp_criterion}  was preceded by a series of recent results. Recall that $\LPN$ is contained in $\LS{4}[h]\times \LS{4}[l]$  for all finite  $h,l$, in particular it is contained in $\LS{4}[h]\times \LS{5}$.   
Initially, 
we obtained the rpp-criterion for a relatively small sublattice of logics above $\LPN$ -- the extensions of $\LS{4.1}[2]\times \LS{5}$, 
where $\LS{4.1}$ is the extension of $\LS{4}$
with the {\em McKinsey formula}  $\Box \Di p \to \Di \Box p$; the proof was given in the first version of our manuscript \cite{LTProductsArxiv}. 
Then in \cite{Meadors_MS4_Arxiv} this result was generalized to all extensions of $\LS{4}[2]\times \LS{5}$. It was also announced in \cite{Meadors_MS4_Arxiv} that it holds for the logics containing $\LS{4}\times \LS{5}$ and the {\em modal Casari formula} 
$\Box^*(\Box_1(\Box_1 p \imp \Box^* p)\imp \Box^* p)\imp \Box^* p$. 
We notice that this formula in valid in $\Noe$ (a straightforward semantic argument), and so belongs to the logic 
$\LS{4}[h]\times \LS{5}$ for all $h$, which allows to apply results of \cite{Meadors_MS4_Arxiv} to get the rpp-criterion  for extensions of $\LS{4}[h]\times \LS{5}$. 
\hide{

Initially, we described the rpp-criterion for a relatively small subfamily of $\LPN$ of extensions of $\LS{4.1}[2]\times \LS{5}$ \cite[Version 1]{LTProductsArxiv}, where $\LS{4.1}$ is the extension of $\LS{4}$ with the formula $\Box \Di p \to \Di \Box p$. Then  this result was generalized to 
 the extensions $\LS{4}[2]\times \LS{5}$. In \cite[Version 2]{Meadors_MS4_Arxiv}, where it was also announced that it holds for the logics containing $\LS{4}\times \LS{5}$ and the formula $\cas=$.

In a recent manuscript 
\cite{Meadors_MS4_Arxiv}, it was announced that the above criterion holds for 
the extensions of $\LS{4}\times \LS{5}+\cas$, where  with the formula 
....
satisfy the criterion of local tabularity given in 
\ref{cor:rp_criterion}. In particular, it implies the criterion for extensions of $\LS4[h]\times \LS{5}$, since the $\cas$ is valid in the corresponding product frames (see Proposition \ref{app-cas-valid} in Appendix). }
\end{remark}

%\subsection{Corollaries} 

While every non-locally tabular extension of each $\LPN[h]$, $h<\omega$, is contained in one of four pre-locally tabular logic,
there are more pre-locally tabular logics above $\LPN$. 

For a unimodal logic $L$, let $L.3$ be its extension defined by the extra axiom 
$\Di p \land \Di q \to \Di (p\land \Di q) \lor \Di(q \land \Di p)$.  
On Kripke frames, the latter formula corresponds to the property
       $\forall x\,\forall y\,\forall z\,(xR y \wedge xR z \to y R z \vee zR y)$. 

  Consider the logic $\Grz.3$. It is well-known that $\Grz.3=\Log\{(m,\le)\mid m < \omega\}$.\ISLater{ref?}      
This logic is pre-locally tabular, and 
every non-locally tabular extension of \(\LS4\) is contained in~\(\Grz.3\)
\cite[Propositions~2.1, 2.4]{Maks1975LT}\cite[Theorem~12.23]{CZ}.     

Let $\Triv$ denote the unimodal logic \(\LK + p\lra \Di p\).
It is %a standard fact 
trivial
that   $\Triv$ is the logic of a reflexive singleton, and that $(X,R)\models \Triv$ iff $R = \Delta_X$.

For a bimodal formula $\vf$, let $\vf'$ 
    be the formula obtained from $\vf$ by erasing 
    each occurrence of $\Di_2$ in $\vf$. It is immediate that  
     for any unimodal   %\IS{Consistent?} 
     logic \(L\), \(\varphi \in \fusion{L}{\Triv}\) iff \(\varphi'\in L\).
In particular, it follows that 
$\fusion{\Grz.3}{\Triv}$ is pre-locally tabular.
 It is also straightforward that   
 $\fusion{\Grz.3}{\Triv} = \Log \{(m,\le,\Delta) \mid m< \omega\}$ and that 
$\fusion{\Grz.3}{\Triv}=\Grz.3\times \Triv$.
%\IS{History? Or too trivial for this?}
 
So $\fusion{\Grz.3}{\Triv}$ and its twin 
$\fusion{\Triv}{\Grz.3}$ are two more pre-locally tabular extensions of $\LPN$. 
And these examples are not exhaustive. 
Similar arguments give another such example 
of the least bimodal logic containing   $\Grz.3(\Di_1)$ and the formula $\Di_1 p\leftrightarrow \Di_2 p$.
  
  \ISLater{Check the p-morphism
  }

   \improve{
\ISLater{
\begin{corollary}  $\LPN$ lacks the finite model property.\IS{Hmm... perhaps, it was a fantasy}
\end{corollary}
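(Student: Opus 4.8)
The plan is to exploit the relational completeness $\LPN = \Log \Noe$ and to show that $\LPN$ is strictly contained in the logic of its finite frames. For this it suffices to produce a single formula $\varphi$ that is \emph{satisfiable} in some frame of $\Noe$ but is satisfiable in \emph{no} finite frame validating $\LPN$: then $\neg\varphi$ is refuted on a frame of $\Noe$, hence $\neg\varphi\notin\Log\Noe=\LPN$, while $\neg\varphi$ is valid on every finite $\LPN$-frame, so $\neg\varphi$ belongs to the logic of the finite $\LPN$-frames. This separation is exactly the failure of the finite model property.

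First I would fix the witnessing frame $\mathfrak{F}=(\omega,\ge)\times(\omega,\nabla)$ (or its mirror image). It lies in $\Noe$: it is a product of two preorders, and since one factor is a single cluster its skeleton is isomorphic to $(\omega,\ge)$, which is Noetherian, so $\mathfrak{F}$ is prenoetherian. Its clusters $c_i=\{i\}\times\omega$ are infinite $\LS5$-clusters arranged along an $(\omega,\ge)$-chain; $\Di_2$ acts as the universal relation inside each cluster, while $\Di_1$ descends the chain through the \emph{rigid matching} $(i,j)\mapsto(i',j)$ that preserves the second coordinate. In particular $\gen{\mathfrak{F}}{(i,j)}$ is infinite, so the required infinity is available here without any infinite ascending chain in the skeleton. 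I would then design $\varphi$ as a spy/grid formula: using propositional markers it would describe a point $r$ in a top cluster together with a sequence of points of that cluster that are forced to be pairwise distinct because their $\Di_1$-images in the cluster below are kept apart, the propagation being controlled by the commutativity and Church--Rosser laws $\com,\chr\in\LS4^2\subseteq\LPN$. Finally I would verify by an explicit valuation that $\mathfrak{F},(i,j)\mo\varphi$, and that any model of $\varphi$ contains infinitely many pairwise distinct points, whence $\varphi$ holds on no finite frame.

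The main obstacle is the simultaneous requirement that $\varphi$ be genuinely satisfiable on $\mathfrak{F}\in\Noe$ \emph{and} admit only infinite models. The usual infinity axioms of many-dimensional modal logic force an infinite $\Di_1$-chain, which is impossible in a prenoetherian frame; here the infinity must instead be pushed \emph{horizontally} into the $\LS5$-clusters, and it is precisely the rigidity of the cross-cluster matching that has to carry the distinctness. Making this encoding robust under arbitrary valuations, rather than just on $\mathfrak{F}$, is the delicate step. Should a direct infinity formula prove elusive, the fallback I would pursue is to compare $\LPN$ with the logic of its finite frames through a property that finiteness guarantees but $\Noe$ does not, namely the existence of \emph{minimal} clusters (which fails, for instance, on $(\omega,\ge)\times\sngl\in\Noe$); the difficulty there, dual to the role of $\presym$, is that such a property is about backward reachability and is not obviously captured by a forward-looking modal formula.
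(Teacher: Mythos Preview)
The paper contains no proof of this statement. The corollary sits inside a macro that the preamble redefines to produce no output, and it carries the author's own marginal note ``Hmm\ldots\ perhaps, it was a fantasy''. So there is nothing to compare against: the claim was tentatively recorded and then suppressed, and the paper leaves its truth open.

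Your proposal is a plan rather than a proof. You correctly identify a candidate witness frame $\mathfrak{F}=(\omega,\ge)\times(\omega,\nabla)\in\Noe$ and the structural feature that might be exploited (the rigid vertical matching $(i,j)\mapsto(i',j)$ along $R_1$), but the formula $\varphi$ is never exhibited, and you yourself flag its construction as ``the delicate step''. The difficulty you name is genuine: the standard product infinity axioms rely on an unbounded strict chain along one relation, whereas every rooted subframe of $\mathfrak{F}$ has finite $\Di_1$-height, so any infinity would have to be manufactured inside an $\LS5$-cluster by leveraging the cross-cluster rigidity. Whether such an encoding can be made to fail on \emph{all} finite $\LPN$-frames --- which need not be products and need not share that rigidity --- is not addressed. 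Your fallback via non-existence of minimal clusters runs into exactly the obstacle you already note: minimality is a backward property that the forward-looking modalities do not obviously capture.

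In short: the paper offers no proof, the statement may well be open, and your outline stops precisely where the substantive argument would have to begin.
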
 
}

\ISLater{Vlad, it looks that it is doable that all logics above $\LPN$ are contained in pre-locally tabular or LT. And the missing components look like ``six matches'' from section 5. I think we need to try it in the next versions of the text.}
}
\section{More pre-locally tabular logics above~\(\LS4^2\)}

\improve{
\IS{We need to say (?): 

- There are more - this is known.  

- Tense and universal are above \(\LS4^2\) (or $\LS4\times \LS{5}$). 
} 
}

%\subsection{\(\LinTGrz\) is pre-locally tabular}
\subsection{Pre-locally tabular tense logic}
%\begin{definition}
  A bimodal Kripke frame~\(F = (X,R_1,R_2)\) is a \emph{tense frame}, if~\(R_1 = R_2\inv.\) It is well-known that the class of tense frames
  is defined by the formula 
  \(\conv\): $$(\Di_1\Box_2 p \to p) \land (\Di_2\Box_1 p \to p).$$
 Let \(\LinT\) denote the bimodal logic~\(\fusion{\LS{4.3}}{\LS{4.3}} + \conv\).   Hence, a rooted bimodal Kripke frame validates \(\LinT\) iff it is a tense frame where both relations are linear preorders.

\hide{
\begin{proposition}\cite{Segerberg1970}\label{prop:LinT_frames}\leavevmode
%    \begin{enumerate}
        A rooted bimodal Kripke frame validates \(\LinT\) iff it is a tense frame where both relations are linear preorders.
        %\IS{This is inaccurate: consider disjoint sums}\VS{fixed}
        %\item \(\LinT\) is canonical.
    %\end{enumerate}
\end{proposition}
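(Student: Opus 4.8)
The plan is to prove both directions by reduction to the unimodal case, using that a tense frame is completely determined by a single relation $R_1$ (with $R_2 = R_1^{-1}$). First I would handle the ``only if'' direction: assume $F = (X, R_1, R_2)$ is rooted and validates $\LinT = \fusion{\LS{4.3}}{\LS{4.3}} + \conv$. Validity of $\conv$ forces $F$ to be a tense frame, so $R_2 = R_1\inv$ (using that $\conv$ corresponds to the tense-frame condition on Kripke frames, as recalled just above the statement). Validity of the $\Di_1$-copy of $\LS{4.3}$ gives that $R_1$ is a reflexive, transitive, weakly connected relation; validity of the $\Di_2$-copy gives the same for $R_2 = R_1\inv$, but weak connectedness of $R_1\inv$ is equivalent to weak connectedness of $R_1$, so this adds nothing new. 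So far $R_1$ is a preorder with the property $\forall x\forall y\forall z(xR_1y \wedge xR_1z \to yR_1z \vee zR_1y)$. The remaining point is \emph{linearity} in the intended sense, i.e. that $R_1$ is a total preorder: any two points are $R_1$-comparable. This uses rootedness: let $w$ be a root, so $w (R_1\cup R_2)^* a$ for every $a$; since $R_1$ is transitive and $R_2 = R_1\inv$, $(R_1 \cup R_1\inv)^*$ is just the equivalence relation ``$R_1$-comparable to'', hence every point is $R_1$-comparable to $w$, and then weak connectedness at $w$ (applied to $wR_1a$... — more precisely, comparability with $w$ plus the weak-connectedness/3-clusters argument) propagates comparability to all pairs. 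I would spell this out: given $a, b$, both are comparable to $w$; a short case analysis on the four comparability patterns, each time invoking weak connectedness at the appropriate point, yields $aR_1b$ or $bR_1a$.

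For the ``if'' direction, suppose $F = (X, R_1, R_2)$ is a tense frame in which $R_1$ (hence also $R_2 = R_1\inv$) is a linear preorder. I must check $F \models \LinT$. Validity of $\conv$ is immediate from $R_2 = R_1\inv$. Validity of the $\Di_1$-copy of $\LS{4.3}$ follows because $R_1$ is a reflexive, transitive, weakly connected (indeed total) relation, and $\LS{4.3}$ is Kripke-complete for exactly this class. Validity of the $\Di_2$-copy follows symmetrically since $R_1\inv$ is also a total preorder (the converse of a total preorder is a total preorder). Since $\LinT$ is generated over $\fusion{\LS{4.3}}{\LS{4.3}}$ by $\conv$, and all these are valid, $F \models \LinT$.

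The main obstacle I anticipate is the linearity/totality argument in the ``only if'' direction: weak connectedness of $R_1$ by itself only guarantees comparability of points with a \emph{common $R_1$-predecessor}, not of arbitrary pairs, so one genuinely needs rootedness together with the tense condition $R_2 = R_1\inv$ to ``spread'' comparability across the whole frame. The key observation making this work is that in a tense frame with transitive $R_1$, the relation $(R_1 \cup R_2)^*$ coincides with the symmetric--transitive closure of $R_1$, i.e. mutual $R_1$-reachability, so a root is $R_1$-comparable to everything; from there a finite case analysis closes the argument. I would also remark that one should be slightly careful about whether ``linear preorder'' in the statement means ``total preorder'' (every two points comparable) versus merely ``weakly connected preorder''; for rooted frames these coincide, which is presumably why the proposition is stated for rooted frames, and I would make that coincidence explicit as the crux of the proof.
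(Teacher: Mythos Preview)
The paper does not give its own proof of this proposition; it simply cites Segerberg~1970 and moves on, so there is nothing in the paper to compare your argument against.

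Your overall strategy is right, but one step is wrong. You write that ``weak connectedness of $R_1^{-1}$ is equivalent to weak connectedness of $R_1$, so this adds nothing new.'' That is false. Forward weak connectedness of $R_1$ says points with a common $R_1$-\emph{predecessor} are comparable; weak connectedness of $R_1^{-1}$ says points with a common $R_1$-\emph{successor} are comparable. The three-element poset $\{b\le a,\;c\le a\}$ (with reflexivity) satisfies the former but not the latter. And your argument genuinely needs the latter: in your four-case analysis, the case ``$aR_1w$ and $bR_1w$'' is precisely a common-successor situation, where forward weak connectedness at $w$ says nothing. Likewise, your claim that $(R_1\cup R_1^{-1})^*$ coincides with ``$R_1$-comparable'' requires the step $aR_1b\wedge cR_1b\Rightarrow a,c$ comparable, which is exactly backward weak connectedness. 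So the $\Di_2$-copy of $\LS{4.3}$ is not redundant; it supplies the missing half. Once you drop the mistaken remark and use both connectedness conditions (which you do have, since $F$ validates $\rep{\LS{4.3}}{\Di_2}$), the rest of your argument goes through as written.
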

%\ISLater{Do we use somewhere that    \(\LinT\)  is canonical?}
}

\hide{
\begin{proposition}\leavevmode
  \begin{enumerate}
    \item A bimodal Kripke frame validates~\(\conv\) iff it is a tense frame.
    \item \(\conv\) is canonical.
  \end{enumerate}
\end{proposition}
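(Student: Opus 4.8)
The plan is to prove part~(1) by the standard correspondence argument and then to read off part~(2) from it, after checking directly that the canonical relations of any logic containing $\conv$ form a tense frame. (Since $\conv$ is the conjunction of the Sahlqvist formulas $\Di_1\Box_2 p\imp p$ and $\Di_2\Box_1 p\imp p$, both statements are also special cases of Sahlqvist's correspondence and canonicity theorems, but the direct arguments are short.) For the ``if'' direction of~(1), let $F=(X,R_1,R_2)$ be a tense frame, so $R_1=R_2\inv$. Then on the powerset algebra of $F$ the operators $\Di_1$ and $\Box_2$ are adjoint, that is $\Di_1 U\se V$ iff $U\se\Box_2 V$ for all subsets $U,V$ of $X$; this is a one-line computation from $R_1=R_2\inv$. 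Instantiating $U=\Box_2 W$ and $V=W$ yields $\Di_1\Box_2 W\se W$ for every $W$, i.e.\ $F\mo\Di_1\Box_2 p\imp p$; the second conjunct is symmetric, so $F\mo\conv$. For the ``only if'' direction, assume $F\mo\conv$ and take $a,b$ with $aR_1b$. Under the valuation $\theta$ with $\theta(p)=R_2(b)$ we have $(F,\theta),b\mo\Box_2 p$, hence $(F,\theta),a\mo\Di_1\Box_2 p$, hence $(F,\theta),a\mo p$ since $F\mo\Di_1\Box_2 p\imp p$; thus $a\in R_2(b)$, i.e.\ $bR_2a$. So $R_1\se R_2\inv$, and the conjunct $\Di_2\Box_1 p\imp p$ symmetrically gives $R_2\se R_1\inv$, equivalently $R_2\inv\se R_1$; the two inclusions yield $R_1=R_2\inv$, so $F$ is a tense frame.

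For part~(2), fix a bimodal logic $L$ with $\conv\in L$, fix $\kappa\le\omega$, and let $F_{L,\kappa}=(X,R_1,R_2)$ be the $\kappa$-canonical frame of $L$. By part~(1) it suffices to show $R_1=R_2\inv$. Suppose $aR_1b$. By the definition of the canonical relation, $\Di_1\chi\in a$ for every $\chi\in b$; in particular $\Di_1\Box_2\psi\in a$ whenever $\Box_2\psi\in b$. Since $L$ is closed under substitution, the instance $\Di_1\Box_2\psi\imp\psi$ of $\conv$ belongs to $L\se a$, so $\psi\in a$ by modus ponens. Hence $\Box_2\psi\in b$ implies $\psi\in a$ for every $\psi$, which is precisely the usual reformulation of $bR_2a$. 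This gives $R_1\se R_2\inv$, and symmetrically, via the conjunct $\Di_2\Box_1 p\imp p$, $R_2\se R_1\inv$; therefore $R_1=R_2\inv$, so $F_{L,\kappa}\mo\conv$ by part~(1). As $L$ and $\kappa$ were arbitrary, $\conv$ is canonical.

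I do not expect a genuine obstacle here: this is a textbook Sahlqvist fact. The only points needing a little care are the choice of the minimal valuation in~(1) (so that the boxed subformula $\Box_2 p$ is forced exactly at $b$) and, in~(2), moving between the two equivalent presentations of the canonical accessibility relations; both are entirely routine.
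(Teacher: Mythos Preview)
Your argument is correct and entirely standard. The paper itself does not prove this proposition: the statement is commented out via \verb|\hide{...}|, and in the surrounding text the frame correspondence is simply asserted as well-known, with canonicity left as an authorial note (``We need to use somewhere that \(\conv\) is canonical''). So there is no paper proof to compare against; your write-up is exactly the direct Sahlqvist-type verification one would expect, and your passage between the two forms of the canonical relation (``$\Box_2\psi\in b\Rightarrow\psi\in a$'' versus ``$\psi\in a\Rightarrow\Di_2\psi\in b$'') is handled correctly.
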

}

%\ISLater{We need to use somewhere that    \(\conv\) is canonical.}

\hide{
\begin{definition}
  \(\LinT\) is the bimodal logic~\(\fusion{\LS{4.3}}{\LS{4.3}} + \conv\).
\end{definition}
}

%\IS{State the completeness; Check the ref}

\begin{proposition}
    \(\LS4^2 \subseteq \LinT.\)
\end{proposition}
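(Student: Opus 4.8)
The goal is to show $\LS4^2 \subseteq \LinT$, where $\LinT = \fusion{\LS{4.3}}{\LS{4.3}} + \conv$. The plan is to use Proposition~\ref{prop:comm_in_product} together with Proposition~\ref{prop:S5square_product_matching}(1), which gives $\LS4^2 = \LCom{\LS4}{\LS4} = \fusion{\LS4}{\LS4} + \com + \chr$. Since $\LinT$ is a normal bimodal logic, it suffices to check that all of the following belong to $\LinT$: the $\LS4$-axioms relativized to $\Di_1$, the $\LS4$-axioms relativized to $\Di_2$, the commutativity axiom $\com = \Di_1\Di_2 p \lra \Di_2\Di_1 p$, and the Church--Rosser axiom $\chr = \Di_1\Box_2 p \to \Box_2\Di_1 p$.

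The first two are immediate: $\LS{4.3}$ extends $\LS4$, so $\fusion{\LS{4.3}}{\LS{4.3}}$ already contains both relativized copies of $\LS4$. For the remaining two axioms, the cleanest route is semantic via Kripke completeness of $\LinT$ with respect to its rooted frames (Proposition~\ref{prop:LinT_frames}): such a frame is a tense frame $(X, R, R\inv)$ where $R$ is a linear preorder. One checks that every such frame validates $\com$ and $\chr$. For $\com$: commutativity of a frame means $R_1\circ R_2 = R_2\circ R_1$, i.e. $R \circ R\inv = R\inv \circ R$; since $R$ is a linear preorder this holds because both composites equal $\nabla_X$ when $X$ is rooted (every point sees the root, hence any two points are $R$-comparable, so for any $x,y$ there is a common $R$-predecessor and a common $R$-successor). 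For $\chr$: the Church--Rosser property $\forall x\forall y\forall z\,(xR_1y \land xR_2 z \to \exists u\,(yR_2 u \land zR_1 u))$ becomes $\forall x\forall y\forall z\,(xRy \land zRx \to \exists u\,(uRy \land uRz))$; taking $u = z$ works, since $zRx$ and $xRy$ give $zRy$ by transitivity, and $zRz$ by reflexivity. Alternatively, one can avoid invoking completeness by noting that $\chr$ follows purely syntactically from $\conv$ and normality: from $\Di_1 \Box_2 p \to p$ (an instance of $\conv$ applied with the box $\Box_2$), monotonicity under $\Box_2$, and the $\LS4$-axioms, one derives $\Di_1\Box_2 p \to \Box_2\Di_1 p$ in any extension of $\fusion{\LK}{\LK} + \conv$; and $\com$ follows similarly from the two halves of $\conv$.

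I expect the only mild obstacle to be deciding between the semantic and the syntactic derivation of $\com$ and $\chr$: the semantic one is shorter but relies on Proposition~\ref{prop:LinT_frames} (which requires $\LinT$ to be complete, not merely sound, with respect to that frame class), while the syntactic one is self-contained but requires carefully tracking the normal-logic derivation of $\chr$ from $\conv$. Since Proposition~\ref{prop:LinT_frames} is already stated in the excerpt and available for use, I would present the semantic argument: exhibit $\com$ and $\chr$ as valid in every rooted $\LinT$-frame, conclude they lie in $\LinT$ by completeness, and then assemble $\fusion{\LS4}{\LS4} + \com + \chr \subseteq \LinT$, which equals $\LS4^2$ by Proposition~\ref{prop:S5square_product_matching}(1).
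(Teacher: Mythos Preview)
Your proposal is correct and follows essentially the same route as the paper: reduce to $\LS4^2=\LCom{\LS4}{\LS4}$ via Proposition~\ref{prop:S5square_product_matching}, note that the $\LS4$-axioms are trivially in $\LinT$, and verify $\com$ and $\chr$ semantically on rooted $\LinT$-frames using Proposition~\ref{prop:LinT_frames}. The only cosmetic difference is that you argue $R_1\circ R_2=R_2\circ R_1=\nabla_X$ while the paper checks the inclusion $R_1\circ R_2\subseteq R_2\circ R_1$ directly; for Church--Rosser your choice $u=z$ works (note your displayed condition should read $zRu$ rather than $uRz$, but with $u=z$ this is immaterial), and the paper equivalently takes $u=x$.
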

\begin{proof}
    It suffices to show that any rooted tense frame \(F=(X,R_1,R_2)\), where \(R_1\) and \(R_2\) linear preorders, has commutativity and the Church-Rosser property.
    Let \(a R_1 b R_2 c.\)
    Then \(b R_2 a\) since \(F\) is a tense frame.
    Since \(b R_2 c\) also holds, by linearity \(a R_2 c\) or \(c R_2 a,\) so either \(a R_2 c\) or \(a R_1 c.\)
    Since both relations are reflexive, \(a (R_2\circ R_1) c,\) so the commutativity holds.
    The Church-Rosser property is immediate since \(R_1 = R_2\inv\): if \(a R_1 b\) and \(a R_2 c,\) then \(bR_2 a\) and \(c R_1 a\).
\end{proof}

\begin{proposition}\label{prop:LinT_gen_subframe}
  Let~\(F = (X,R_1,R_2)\) be a Kripke frame that validates~\(\LinT.\)
  Then for any~\(Y\subseteq X,\) the generated subframe~\(F\langle Y\rangle\) is precisely the restriction~\(F\restr (R_1\cup R_2)[Y].\)
\end{proposition}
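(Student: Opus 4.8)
The plan is to show the two inclusions. For the easy inclusion, $F\restr (R_1\cup R_2)[Y] \subseteq F\langle Y\rangle$: by definition $F\langle Y\rangle = F\restr R^*[Y]$ where $R = R_1\cup R_2$, and clearly $(R_1\cup R_2)[Y] = R[Y] \subseteq R^*[Y]$, so this direction is immediate (and holds for any $\LS4^2$-frame, not just $\LinT$-frames, since $R$ is reflexive).

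For the reverse inclusion $R^*[Y] \subseteq (R_1\cup R_2)[Y]$, the key claim is that on an $\LinT$-frame the relation $R = R_1\cup R_2$ is already transitive, so that $R^* = R$ (using reflexivity, $R \subseteq R^* \subseteq \ldots$, but transitivity gives $R^n \subseteq R$ for all $n$). First I would reduce to rooted frames: since $R^*[Y] = \bigcup_{a\in Y} R^*(a)$ and each $R^*(a)$ is the domain of the rooted generated subframe $\gen{F}{a}$, and rooted generated subframes of $\LinT$-frames again validate $\LinT$ (by the generated-subframe lemma), it suffices to show that in a rooted $\LinT$-frame $R$ is transitive. By Proposition~\ref{prop:LinT_frames}, a rooted $\LinT$-frame is a tense frame in which $R_1$ and $R_2 = R_1\inv$ are both linear preorders; so I must show: if $a\mathrel{R}b\mathrel{R}c$ with each step being an $R_1$- or $R_2$-step, then $a\mathrel{R}c$. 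There are four cases by which relation is used at each step. The cases $aR_1bR_1c$ and $aR_2bR_2c$ are handled by transitivity of $R_1$ (resp. $R_2$). For the mixed case $aR_1bR_2c$: since the frame is tense, $bR_2c$ gives $cR_1b$, and together with $aR_1b$ and linearity of $R_1$ (comparability of $a$ and $c$ under $R_1$) we get $aR_1c$ or $cR_1a$; in the first subcase we are done, and in the second $cR_1a$ gives $aR_2c$ by tenseness, so $a\mathrel{R}c$ either way. The case $aR_2bR_1c$ is symmetric. Hence $R$ is transitive, so $R^*(a) = R(a) = (R_1\cup R_2)(a)$ for a root $a$, and taking unions over $a\in Y$ finishes the inclusion.

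Putting the two inclusions together gives $F\langle Y\rangle = F\restr (R_1\cup R_2)[Y]$ as frames (the domains coincide, and the relations of a subframe are determined by the domain). I expect the main obstacle — though a minor one — to be making sure the reduction to rooted frames is clean: one must observe that rootedness is used only to invoke Proposition~\ref{prop:LinT_frames}, and that the transitivity argument for $R$ is genuinely local (it only involves the three points $a,b,c$ and the comparability supplied by linearity of $R_1$, $R_2$ within the rooted subframe containing them), so no global structure is lost. Everything else is the routine four-case check above.
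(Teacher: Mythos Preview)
Your proof is correct and follows the same core idea as the paper: both arguments show that \(R_1\cup R_2\) is already transitive by a four-case check on two-step paths, using the tense condition \(R_2=R_1^{-1}\) together with the \(.3\) (non-branching) property. The only difference is that the paper applies non-branching directly in the full frame---in the mixed case \(aR_2bR_1c\) one gets \(bR_1a\) and \(bR_1c\), and non-branching of \(R_1\) immediately gives comparability of \(a\) and \(c\)---so no reduction to rooted subframes or appeal to Proposition~\ref{prop:LinT_frames} is needed; your detour through rootedness and ``linearity'' is harmless but unnecessary.
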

\begin{proof}
  Let us show by induction on~\(k < \omega\) that~\((R_1 \cup R_2)^k = R_1 \cup R_2.\)
  The base is trivial.
  For the transition, let~\(a (R_1 \cup R_2)^k b R_1 c\).
  Then either~\(a R_1 b R_1 c\) or~\(a R_2 b R_1 c.\)
  In the former case~\(a R_1 c\) by transitivity.
  In the latter case,~\(b R_1 a\) by the frame condition of~\(\conv,\) so \(a R_1 c\) or~\(c R_1 a\) since~\(R_1\) is non-branching, hence~\(a (R_1\cup R_2)c\) by \(\conv\) again.
  In either case,~\(a (R_1 \cup R_2) c,\) as desired.
  The case \(a (R_1 \cup R_2)^k b R_2 c\) is symmetric.

  We conclude that~\((R_1\cup R_2)^* = R_1 \cup R_2.\)
  The proposition follows immediately.
\end{proof}

\begin{proposition}\label{prop:LinT_rooted_subframe}
    If \(F = (X,R_1,R_2)\) is a rooted Kripke frame that validates \(\LinT\), then \(F = \gen{F}{a} = F\restr(R_1\cup R_2)(a)\) for any~\(a\in X.\)
\end{proposition}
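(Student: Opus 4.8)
The plan is to deduce the statement directly from Proposition~\ref{prop:LinT_gen_subframe}. Applying that proposition with $Y = \{a\}$ gives $\gen{F}{a} = F\restr (R_1\cup R_2)(a)$ for every $a\in X$, which already establishes the second equality. So the only real content is to show $F = \gen{F}{a}$, i.e.\ that every point of $F$ is a root, or equivalently that $(R_1\cup R_2)(a) = X$ for every $a\in X$.

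First I would use the hypothesis that $F$ is rooted: fixing a root $r$, Proposition~\ref{prop:LinT_gen_subframe} (with $Y=\{r\}$) gives $X = \dom\gen{F}{r} = (R_1\cup R_2)(r)$. Next, for an arbitrary $a\in X$, I would exploit the tense frame condition $R_1 = R_2\inv$ coming from $\conv$ to reverse the edge witnessing $a\in (R_1\cup R_2)(r)$: from $r R_1 a$ we obtain $a R_2 r$, and from $r R_2 a$ we obtain $a R_1 r$, so in either case $r\in (R_1\cup R_2)(a)$. Finally, composing this with $X = (R_1\cup R_2)(r)$ and invoking the identity $(R_1\cup R_2)^2 = R_1\cup R_2$ that was established in the proof of Proposition~\ref{prop:LinT_gen_subframe}, I would conclude that $(R_1\cup R_2)(a) = X$. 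Since $F\restr X = F$, this yields $\gen{F}{a} = F$.

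There is essentially no serious obstacle here: the argument is short and relies only on the tense condition together with the transitivity/non-branching facts already packaged into Proposition~\ref{prop:LinT_gen_subframe}. The one point to keep in mind is that $(R_1\cup R_2)(a)$ genuinely contains $a$ itself (so that restricting $F$ to it is indeed a generated subframe through $a$), which holds because $\LinT$ extends $\fusion{\LS{4.3}}{\LS{4.3}}$ and hence both $R_1$ and $R_2$ are reflexive.
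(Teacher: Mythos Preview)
Your proposal is correct and follows essentially the same route as the paper: fix a root $r$, use Proposition~\ref{prop:LinT_gen_subframe} to get $r(R_1\cup R_2)a$, flip this via the tense condition $R_2=R_1\inv$ to obtain $r\in(R_1\cup R_2)(a)$, and then conclude $\gen{F}{a}=F$; the second equality is immediate from Proposition~\ref{prop:LinT_gen_subframe}. The only cosmetic difference is that the paper phrases the last step as ``$r$ belongs to $\gen{F}{a}$ and $r$ is a root, hence $F=\gen{F}{a}$'', whereas you spell it out via $(R_1\cup R_2)^2=R_1\cup R_2$.
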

\begin{proof}
    Let \(r\in X\) be the root of \(F\), and let \(a\in X\) be arbitrary.
    By Proposition~\ref{prop:LinT_gen_subframe}, \(r(R_1\cup R_2)a,\) and then \(a (R_1\cup R_2) r\) since \(R_2 = R_1\inv\) by the frame condition of~\(\conv.\)
    It follows that \(r\) belongs to \(\gen{F}{a},\) and since \(r\) is the root, we have \(F = \gen{F}{a}.\)
    The second identity follows from Proposition~\ref{prop:LinT_gen_subframe}.
\end{proof}

\begin{proposition}\label{prop:LinT_LT}
  Let~\(L \supseteq \LinT\) be a bimodal logic.
  If~\(\rep{\bh_n}{\Di_1} \in L\) for some \(n\), then~\(L\) is locally tabular.
\end{proposition}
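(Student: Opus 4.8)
The plan is to reduce to the smallest logic in the family and then bound the number of blocks in a canonical model. Since every normal extension of a locally tabular logic is locally tabular and the hypotheses give \(L \supseteq L_0 := \LinT + \rep{\bh_n}{\Di_1}\), it suffices to show that \(L_0\) is locally tabular. Note \(L_0 \supseteq \LS4^2\), so \(L_0\) is \(2\)-transitive, hence pretransitive. I would fix \(k < \omega\) and, using the lemma characterizing \(k\)-finiteness, try to prove that the \(k\)-canonical general frame \(G := G_{L_0,k}\) is finite. Assuming it is not: its algebra (the \(k\)-generated free \(L_0\)-algebra) is then infinite, and since canonical general frames are differentiated, \(\dom G\) is infinite as well; by Proposition~\ref{prop:canonical_infinite_subframe} there is \(a\) with \(H := \gen{G}{a}\) infinite, and again by differentiatedness the algebra of \(H\) — which is \(k\)-generated, namely by the restrictions \(\theta(p_i)\cap\dom H\) of the canonical valuations — is infinite.

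Next I would pin down the shape of \(\kripke H\). The \(\LS{4.3}\)-axioms, \(\conv\), and \(\bh_n\) are all canonical (canonicity of the bounded-height formulas is classical; see \cite{Seg_Essay,CZ}), so \(\kripke G\), hence its rooted subframe \(\kripke H\), validates \(\LinT\) and \(\rep{\bh_n}{\Di_1}\). By Proposition~\ref{prop:LinT_frames}, \(\kripke H\) is a rooted tense frame in which both relations are linear preorders, and \(\rep{\bh_n}{\Di_1}\) bounds its \(R_1\)-height by \(n\). So \(\kripke H\) is a chain of \(R_1\)-clusters \(D_1 < \dots < D_j\) with \(j \le n\), and for any point \(x \in D_i\) we get \(R_1(x) = D_i\cup\dots\cup D_j\) and \(R_2(x) = D_1\cup\dots\cup D_i\), depending only on \(i\). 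Consequently, for every modal formula \(\psi\) the truth values of \(\Di_1\psi\) and of \(\Di_2\psi\) are constant on each \(D_i\); and since every modal formula is a Boolean combination of variables and formulas \(\Di_1\psi\), \(\Di_2\psi\), two points of the same \(D_i\) on which \(p_0,\dots,p_{k-1}\) agree satisfy exactly the same \(k\)-formulas.

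The final step is the block-system analysis of Lemma~\ref{lem:block-system} applied to the \(k\)-model carried by \(H\). By the preceding observation, for every depth \(d\) the equivalence \(\sim_d\) has at most \(2^k\) classes inside each of the at most \(n\) clusters, so \(|\clV_d| \le n\cdot 2^k\). From this I would deduce that \((\clV,\supseteq)\) has no infinite \(\supset\)-chain: if \(V_0 \supsetneq V_1 \supsetneq \dots \supsetneq V_m\) with \(V_i \in \clV_{d_i}\), then at level \(d_m\) the set \(V_0\) is the disjoint union of the nonempty sets \(V_m, V_{m-1}\setminus V_m, \dots, V_0\setminus V_1\), each of which is a union of \(\sim_{d_m}\)-classes, whence \(m+1 \le |\clV_{d_m}| \le n 2^k\). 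Then Lemma~\ref{lem:block-system}\eqref{block-chain} forces the algebra of \(H\) to be finite, contradicting the previous paragraph. Hence \(G\) is finite, \(L_0\) is \(k\)-finite for every \(k\), and \(L_0\) (so also \(L\)) is locally tabular.

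The part I expect to be most delicate is the reduction to a \emph{rooted} generated subframe: rootedness together with \(\rep{\bh_n}{\Di_1}\) is precisely what caps the number of clusters at \(n\) and so yields the uniform bound on \(|\clV_d|\) — in a non-rooted \(\LinT\)-frame there could be unboundedly many clusters and the counting breaks down. A smaller but essential point is invoking the canonicity of \(\bh_n\), needed to transfer validity from the canonical general frame to the frame condition bounding the \(R_1\)-height by \(n\).
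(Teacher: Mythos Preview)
Your argument is correct. Both you and the paper reduce to a rooted subframe \(G=(X,R_1,R_2,A)\) of the \(k\)-canonical frame, invoke canonicity of the \(\LS{4.3}\)-axioms, \(\conv\), and \(\bh_n\) to see that \((X,R_1)\) is a linear preorder of height at most \(n\) with \(R_2=R_1^{-1}\), and then argue that \(A\) is finite; the divergence is in this last step. The paper notes that for any \(Y\in A\) one has \(R_2^{-1}[Y]=R_1[Y]=D_1\cup\dots\cup D_m\) (a union of depth-level sets), so \(A\) sits inside the \emph{unimodal} subalgebra of \(\Alg(X,R_1)\) generated by \(P_1,\dots,P_k,D_1,\dots,D_n\); it then appeals to the Segerberg--Maksimova criterion for \(\LS4[n]\) to conclude that this algebra is finite. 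Your route is more self-contained and bypasses Segerberg--Maksimova entirely: the observation that \(R_1(x)\) and \(R_2(x)\) depend only on the cluster of \(x\) yields a finite filtration by the (cluster, variable-pattern) equivalence, giving the uniform bound \(|\clV_d|\le n\cdot 2^k\). In fact, once you have that bound you do not even need the chain argument via Lemma~\ref{lem:block-system}: every element of \(A\) is a union of the at most \(n\cdot 2^k\) blocks, so \(|A|\le 2^{\,n\cdot 2^k}\) outright. The paper's packaging has the advantage of reducing to a named unimodal criterion; yours has the advantage of giving an explicit bound and needing no external local-tabularity result.
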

\begin{proof}
  Let~\(\rep{\bh_n}{\Di_1} \in L\) and consider the \(k\)-canonical general frame for~\(L\), where~\(k < \omega.\)
  Let~\(G=(X,R_1,R_2,A)\) be its arbitrary rooted subframe, and let~\(P_1,\,\ldots,\,P_k\) be the generators of~\(\Alg G.\)

  Observe that~\((X,R_1)\models \LS4[n].\)
  Then the depth of points in \((X,R_1)\) is bounded by~\(n.\)
  Let~\(D_h \subseteq X\) denote the set of all points of depth~\(h\) in~\((X,R_1),\) for~\(1\le h \le n.\)

  By Segerberg-Maksimova criterion~\(\Log(X,R_1)\) is locally tabular.
  Then any finitely generated subalgebra of~\(\Alg(X,R_1)\) is finite.
  We show that~\(A\) can be represented in this way.
  Let~\(Y \in A\) and denote the largest depth of~\(a\in Y\) in~\((X,R_1)\) by~\(m\).
  Since the height of~\((X,R_1)\) is at most~\(n\), we have~\(m \le n.\)
  Notice that
  \[
    R_2\inv[Y] = R_1[Y] = \{a \in X \mid \text{the depth of }a \text{ in }G\text{ is at most } m\} = D_1 \cup \ldots \cup D_m.
  \]
  Then~\(A\) is contained in the subalgebra of~\(\Alg(X,R_1)\) generated by~\(k+m\) sets~\(P_1,\,\ldots,\,P_k,\,D_1,\,\ldots,\,D_m.\)
  Since it is finitely generated, we conclude that \(A\) is finite.
  It follows that any point-generated subframe of~\(G_{L,k}\) has a finite algebra for any~\(k\).
  As an extension of \(\LS4^2\), \(L\) is pretransitive, so \(L\) is locally tabular by Theorem~\ref{thm:big_cluster}.
\end{proof}

\begin{definition}
  \(\LinTGrz\) is the bimodal logic \(\fusion{\Grz.3}{\Grz.3} + \conv.\)
\end{definition}

\begin{proposition}\label{prop:LinTGrz}\cite[Theorem~2.8]{Segerberg1970}
  \(\LinTGrz=\Log\{(n,\le,\ge)\mid n < \omega\}\).
\end{proposition}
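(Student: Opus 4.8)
The plan is to prove the two inclusions $\LinTGrz\subseteq\Log\mathcal{C}$ and $\Log\mathcal{C}\subseteq\LinTGrz$, where $\mathcal{C}=\{(n,\le,\ge)\mid n<\omega\}$. Soundness is routine: each $(n,\le,\ge)$ is a tense frame, since $\ge=(\le)\inv$, hence validates $\conv$, and each of its reducts $(n,\le)$ and $(n,\ge)$ is a finite linear order, hence validates $\Grz.3=\Log\{(m,\le)\mid m<\omega\}$; therefore $(n,\le,\ge)\mo\fusion{\Grz.3}{\Grz.3}+\conv=\LinTGrz$.

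For the converse, the key structural step is a description of the finite rooted Kripke frames that validate $\LinTGrz$. Such a frame $F=(X,R_1,R_2)$ validates $\LinT$ (as $\LS{4.3}\subseteq\Grz.3$), so by Propositions~\ref{prop:LinT_frames} and~\ref{prop:LinT_rooted_subframe} it is a tense frame in which $R_1$ is a total preorder and $R_2=R_1\inv$; moreover $F$ validates the Grzegorczyk axiom in $\Di_1$ (as $\Grz.3(\Di_1)\subseteq\LinTGrz$), and a finite preorder validating it has no proper clusters. A finite total preorder without proper clusters is a finite linear order, so, relabelling $X$, we get $F\cong(|X|,\le,\ge)$. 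Hence the class of finite rooted $\LinTGrz$-frames coincides, up to isomorphism, with $\mathcal{C}$; since the class of finite $\LinTGrz$-frames is closed under generated subframes and disjoint unions, $\Log\{\text{finite }\LinTGrz\text{-frames}\}=\Log\mathcal{C}$. So it suffices to prove that $\LinTGrz$ has the finite model property.

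This last step is the main obstacle, and it is the real content of the statement: the canonical-frame argument does not apply directly, since the Grzegorczyk axiom is not canonical, and a naive filtration of an $\LS4$-model does not collapse clusters. The plan is a filtration adapted to the Grzegorczyk axiom. One starts from a rooted generated submodel $M$ of the canonical model of $\LinTGrz$ refuting $\vf$; as all axioms of $\LinT$ are canonical (Sahlqvist), the underlying Kripke frame of $M$ is a tense frame in which $R_1$ is a total preorder, by the analysis above. Fix a finite set $\Sigma$ of formulas closed under subformulas of $\vf$ and under single negations, and form a filtration of $M$ through $\Sigma$ that, in addition to identifying $\Sigma$-equivalent points, contracts the remaining clusters. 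The verification to carry out is that the resulting finite frame is still a tense frame (using $\conv\in\LinTGrz$), is still weakly connected, hence totally ordered (using the $.3$-axioms), and has no proper clusters; by the finite-frame description it is then a finite chain $(n,\le,\ge)$, and the filtration lemma yields a refutation of $\vf$ on it. This is in essence Segerberg's argument \cite{Segerberg1970}, to which one may alternatively appeal directly.
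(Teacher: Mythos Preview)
The paper does not give its own proof of this proposition; it is stated with a citation to \cite[Theorem~2.8]{Segerberg1970} and used as a black box. Your outline is a faithful reconstruction of what such a proof involves: soundness is immediate, the identification of finite rooted $\LinTGrz$-frames with finite chains is correct (via Propositions~\ref{prop:LinT_frames} and~\ref{prop:LinT_rooted_subframe} together with the fact that a finite preorder validating $\Grz$ has only trivial clusters), and you correctly isolate the finite model property as the real content of the statement.

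The one place where your sketch is thin is precisely the step you flag: the filtration that ``contracts the remaining clusters''. Collapsing clusters is not in general truth-preserving for subformulas of $\varphi$, so the verification that the resulting finite chain still refutes $\varphi$ is not a routine appeal to the filtration lemma; this is the technical work in Segerberg's paper. Since you explicitly defer to \cite{Segerberg1970} for this step, your proposal ultimately lands at the same point as the paper: invoking Segerberg's result.
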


\begin{definition}\label{def:lrplus}
  Let~\(F = (X,R_1,R_2)\) and \(G = (Y,S_1,S_2)\) be bimodal frames,~\(X\cap Y = \varnothing.\)
  The \emph{tense sum}~\(F \lrplus G\) is the bimodal frame~\((X \cup Y, R_1\cup S_1 \cup (X\times Y), R_2 \cup S_2 \cup (Y \times  X))\).
  % \IS{$X \sqcup Y$ should be $X \cup Y$ in this case}
\end{definition}

The following simple observation is a particular case of~\cite[Proposition 3.4]{AiML2018-sums}:
\begin{proposition}\label{prop:lrplus}
  Let~\(F_1,\,F_2,\,G_1,\,G_2\) be tense disjoint frames with preorder relations. 
  If~\(f:\:F_1 \toto G_1\) and~\(g:\:F_2 \toto G_2,\) then~\(f\cup g:\:F_1 \lrplus F_2 \toto G_1 \lrplus G_2.\)
\end{proposition}\ISLater{not all of them should be disjoint}

\begin{theorem}\label{thm:LinTGrz_cover}
  Let \(L \supseteq \LinT\) be a bimodal logic.
  Then \(L\) is locally tabular or~\(L \subseteq \LinTGrz.\)
\end{theorem}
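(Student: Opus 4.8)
The plan is to mimic the structure of the proof of Theorem~\ref{thm:finite_height_cover}: reduce to an infinite cluster in a finitely generated canonical frame, then build a p-morphism onto the generating frames of $\LinTGrz$, namely the finite chains $(n,\le,\ge)$. First I would assume $L$ is not locally tabular. By Proposition~\ref{prop:LinT_LT} (contrapositive), $\rep{\bh_n}{\Di_1}\notin L$ for every $n<\omega$, so the frame $(X,R_1)$ underlying the $k$-canonical general frame has unbounded $R_1$-height; since $L$ is pretransitive as an extension of $\LS4^2$, Proposition~\ref{prop:canonical_infinite_subframe} gives a $k$ for which $G_{L,k}$ is infinite and a point $r$ with $\gen{G_{L,k}}{r}$ infinite. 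By Proposition~\ref{prop:LinT_rooted_subframe} this rooted subframe $G$ is itself of the form $F\restr(R_1\cup R_2)(a)$, with $R_2=R_1\inv$; in particular $G$ is a \emph{tense} frame whose skeleton is a chain of clusters (by Proposition~\ref{prop:LinT_frames}, linearity of the preorders).

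The next step is to locate an infinite cluster. Since $G$ is infinite but, by Proposition~\ref{prop:LinT_frames} and $\rep{\bh_n}{\Di_1}\notin L$ for all $n$ only telling us the $R_1$-height of the \emph{canonical} frame is unbounded, I would instead argue directly on $G$: its skeleton is a linear order of clusters, and if every cluster were finite and the skeleton finite, $G$ would be finite; if the skeleton were infinite we could quotient onto an infinite chain, contradicting — wait, that's exactly what we want in one case. So I split: either (i) $G$ has a cluster $C$ which is infinite, or (ii) all clusters of $G$ are finite but the skeleton chain is infinite. In case (ii), the skeleton map $G\toto(\omega,\le,\ge)$ or $(\omega,\ge,\le)$ composed with further collapsing gives $G\toto(n,\le,\ge)$ for all $n$ (admissibility needs checking, but the preimages of down-sets of the chain are definable), so $L\subseteq\Log\{(n,\le,\ge)\mid n<\omega\}=\LinTGrz$ by Proposition~\ref{prop:LinTGrz}. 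In case (i), I mimic Theorem~\ref{thm:big_cluster}: $C$ is an infinite cluster, and using Proposition~\ref{prop:definable-point} to define the finitely many points outside it (here the frame above and below $C$ in the chain has bounded height — actually I'd need the analogue of the $X_{l-1}$ argument), $C$ is in the algebra. But a cluster in a tense $\LinT$-frame that is linear forces $R_1\restr C=R_2\restr C=\nabla_C$, so $G\restr C$ is an $\LS5^2$-cluster that is also a tense frame; with $C$ infinite and the rest of $G$ finite, collapsing $C$ and applying Proposition~\ref{prop:rich_pmorphism} plus Lemma~\ref{prop:lrplus} with tense sums $\lrplus$ would map $G$ onto frames like $(\text{finite chain})\lrplus\rect{m}{m}\lrplus(\text{finite chain})$ — but these are not $\LinT$-frames since $\rect{m}{m}$ with $m\ge 2$ is not linear.

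This is where the main obstacle lies, and it suggests the conclusion: a rooted $\LinT$-frame has \emph{linear} preorders, so it cannot contain a nontrivial cluster at all beyond a single point — a cluster $C$ with $|C|\ge 2$ would have two $R_1$-incomparable... no, in a cluster all points are $R_1$-related both ways, which is fine for linearity (linear preorders allow ties). So $\LinT$-clusters \emph{can} be large. The real constraint is that $G\restr C$ being a sub-$\LS5^2$-frame and $\Log G\supseteq\LinT$ is compatible only if... here I would need: a tense linear $\LS5^2$-cluster validates $\Di_1 p\lra\Di_2 p$, i.e. $R_1=R_2$ on $C$, which combined with $R_2=R_1\inv$ and $R_1=\nabla_C$ is automatic. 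Then $\Log(G\restr C)$, being a non-locally-tabular extension of $\LS5^2$ equal to $\LS5^2$, but also an extension of the cluster-logic of $\LinT$ which is $\LS5+(\Di_1 p\lra\Di_2 p)$-ish — and I claim this forces $C$ to be finite after all, handled by the $\rp_m$ obstruction or directly, leaving only case (ii). The hard part will be making the case split clean: showing that an infinite cluster cannot occur in a rooted $\LinT$-canonical frame (so that we are always in the ``infinite chain of finite clusters'' case), or alternatively absorbing the infinite-cluster case into $\LinTGrz$ by noting the cluster must then be a single point. I expect the cleanest route is: prove every point-generated subframe of $G_{L,k}$ with finite $R_1$-skeleton is finite (via Theorem~\ref{thm:big_cluster}-style reasoning, using that $\Log$ of a single $\LinT$-cluster is locally tabular because it is $\LS5$-like and $1$-finite fails only via unbounded height), hence infiniteness forces an infinite skeleton chain, hence $G\toto(n,\le,\ge)$ for all $n$, hence $L\subseteq\LinTGrz$.
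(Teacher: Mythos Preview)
Your approach has a genuine gap, and the paper's route is quite different from what you sketch.

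\textbf{The gap.} You try to fix a single $k$ and a single infinite rooted subframe $G$ of $G_{L,k}$, and then you need $G$ to have an infinite $R_1$-skeleton in order to map onto every $(n,\le,\ge)$. But Proposition~\ref{prop:canonical_infinite_subframe} only gives you that $G$ is infinite; it does not tell you its $R_1$-skeleton is infinite. Your proposed fix---showing that a rooted $\LinT$-canonical subframe with finite $R_1$-skeleton must be finite---is essentially the content of Proposition~\ref{prop:LinT_LT} transported to a single frame, but you have not actually carried this out, and it is not immediate. Even granting the infinite skeleton, the admissibility of your quotient map is a real obstacle: the preimages of points of $(n,\le,\ge)$ are unions of clusters, and you have given no reason why arbitrary clusters (or unions of initial segments of the skeleton) lie in the algebra $A$. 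Your case~(i) digression on infinite clusters is a symptom of this: you are trying to analyze the structure of $G$ rather than extracting admissible sets from the logic.

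\textbf{What the paper does instead.} The paper does \emph{not} fix a single frame and does \emph{not} look at clusters at all. For each $n$ separately, the failure of $\rep{\bh_n}{\Di_1}$ in $L$ yields (some $k$ and) a rooted subframe $G$ of $G_{L,k}$ together with a chain $Y_0\subsetneq\cdots\subsetneq Y_n=X$ of elements of the algebra witnessing the refutation. Since in a tense frame $R_1[Y]=R_2^{-1}[Y]$ is automatically in the algebra, one normalizes to $Y_j=R_1[Y_j]$; the differences $Z_j=Y_j\setminus Y_{j-1}$ then decompose $\kripke G$ as a tense sum $Z_n\lrplus\cdots\lrplus Z_0$, and collapsing each $Z_j$ to a point is an admissible p-morphism $G\toto(n{+}1,\le,\ge)$. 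The admissible sets come for free from the refuting valuation; no cluster analysis, no single infinite-skeleton frame, and no appeal to Proposition~\ref{prop:canonical_infinite_subframe} or Theorem~\ref{thm:big_cluster} is needed.
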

\begin{proof}
  Let \(L \supseteq \LinT\) be not locally tabular.
  Fix any~\(n < \omega.\)
  By Proposition~\ref{prop:LinT_LT},~\(\rep{\bh_n}{\Di_1}\not\in L,\) so for some~\(k < \omega\) the \(k\)-canonical general frame for~\(L\) contains a rooted subframe~\(G = (X,R_1,R_2,A)\) that refutes~\(\rep{\bh_n}{\Di_1}\).
  % \IS{Why? Looks like we need some arguments here. Perhaps, we can consider $G_{L,n}$?}.
  Then there exists a sequence~\(Y_0,\,Y_1,\,\ldots,\,Y_n \in A\) such that~\(Y_j \subseteq R_1[Y_k]\) and~\(Y_k \cap R_1[Y_j] = \varnothing\) whenever~\(j < k\).
  Observe that~\(R_1[Y_k] = R_2\inv[Y_k] \in A\), so we may assume that~\(Y_k = R_1[Y_k]\) for all~\(k.\)
  Moreover, we may put~\(Y_n = X.\)

  Let~\(F = \kripke G\) and \(Z_0 = Y_0,\,Z_{k+1} = Y_{k+1}\setminus  Y_k\) for all~\(0 < k < n.\)
  We show by induction that~\(F\restr Y_k = F \restr Z_k \lrplus F\restr Y_{k-1}\) for~\(k < n.\)
  The base case is trivial.
  Assuming the induction hypothesis for~\(F\restr Y_k,\) consider~\(F\restr Y_{k+1}.\)
  By the construction,~\(Y_{k+1} = Z_{k+1} \cup Y_k\) and~\(Z_{k+1}\cap Y_k = \varnothing\).
  % \IS{What does $\sqcup$ mean, exactly?}
  If~\(a\in Y_k,\) then~\(R_1(a) \subseteq R_1[Y_k] = Y_k.\)
  Let~\(a\in Z_{k+1}.\) %\IS{Say that tense is valid}
  It is well-known that $\LinT$ is canonical.\ISLater{improve the exposition?} 
  By Proposition~\ref{prop:LinT_rooted_subframe},~\(Y_{k} \subseteq (R_1\cup R_2)(a).\)
  If~\(a R_2 b\) for some~\(b\in Y_{k},\) then \(a\in Y_{k+1}\cap R_1[Y_k]\), which is empty by construction, providing a contradiction.
  Then~\(Y_k \subseteq R_1(a).\)
  Then the first relation of~\(F\restr Y_{k+1}\) equals the one in Definition \ref{def:lrplus}.
  The second relation also satisfies the condition, since \(R_2 = R_1\inv.\)
  
  The induction is complete.
  We have~\(F = F\restr Z_n \lrplus \ldots \lrplus F\restr Z_0\).
  %\IS{Old comments: 1.   I am not sure about this. It looks that the assumption that $Y_i$'s form a chain is missing. 2.    This is formally inaccurate: $\lrplus$ is defined for disjoint frames}
  Trivially,~\(F \restr Z_k \toto \sngl\) for all~\(k \le n.\)
  Then~\(F \toto (n+1,\le,\ge)\) by Proposition~\ref{prop:lrplus}, since~\((n+1,\le,\ge)\) is isomorphic to~\(\sngl \lrplus \ldots \lrplus \sngl.\)
  Finally, the admissibility condition holds since~\(Z_k \in A\) for all \(k \le n,\) so~\(G \toto (n+1,\le,\ge).\)
  Since~\(n\) was arbitrary,~\(L \subseteq \Log G \subseteq \Log \{(n+1,\le,\ge) \mid n < \omega\}.\)
  The latter logic is precisely~\(\LinTGrz\) by Proposition~\ref{prop:LinTGrz}, as desired.
\end{proof}

The logic \(\LinTGrz\) is not locally tabular, since its $\Di_1$-fragment 
is not locally tabular. 
\begin{corollary}
  \(\LinTGrz\) is pre-locally tabular.
\end{corollary}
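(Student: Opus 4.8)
The plan is to derive pre-local tabularity of $\LinTGrz$ from the two facts already established: Proposition~\ref{prop:LinTGrz_not_1finite}, which shows $\LinTGrz$ is not $1$-finite and hence not locally tabular, and Theorem~\ref{thm:LinTGrz_cover}, which says every bimodal $L \supseteq \LinT$ is either locally tabular or contained in $\LinTGrz$. Since $\LinTGrz \supseteq \LinT$ (it is $\fusion{\Grz.3}{\Grz.3} + \conv$, obtained by adding axioms to $\fusion{\LS{4.3}}{\LS{4.3}} + \conv$, because $\Grz.3 \supseteq \LS{4.3}$), both results apply to $\LinTGrz$ itself and to all its extensions.

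First I would record that $\LinTGrz$ is not locally tabular: by Proposition~\ref{prop:LinTGrz_not_1finite} it is not even $1$-finite. Next, let $L' \supsetneq \LinTGrz$ be any proper normal extension. Then $L' \supseteq \LinT$, so Theorem~\ref{thm:LinTGrz_cover} gives that either $L'$ is locally tabular or $L' \subseteq \LinTGrz$. The second alternative is impossible since $L' \supsetneq \LinTGrz$, so $L'$ must be locally tabular. Combining, $\LinTGrz$ is not locally tabular while every proper extension is — this is exactly pre-local tabularity as in the Definition.

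I do not anticipate a genuine obstacle here; the corollary is a direct two-line consequence of the preceding theorem and proposition. The only point requiring a word of care is confirming $\LinTGrz \supseteq \LinT$ so that Theorem~\ref{thm:LinTGrz_cover} is applicable to $\LinTGrz$ and to all logics strictly above it; this follows immediately from $\Grz.3 \supseteq \LS{4.3}$ together with the definitions of fusion and of the two logics, since adding the extra Grzegorczyk axioms to each coordinate of $\fusion{\LS{4.3}}{\LS{4.3}}$ only enlarges the logic, and $\conv$ is present in both.

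\begin{proof}
  By Proposition~\ref{prop:LinTGrz_not_1finite}, $\LinTGrz$ is not $1$-finite, hence not locally tabular. Since $\Grz.3 \supseteq \LS{4.3}$, we have $\LinTGrz = \fusion{\Grz.3}{\Grz.3} + \conv \supseteq \fusion{\LS{4.3}}{\LS{4.3}} + \conv = \LinT$; moreover any logic $L' \supsetneq \LinTGrz$ also contains $\LinT$. By Theorem~\ref{thm:LinTGrz_cover}, such $L'$ is either locally tabular or contained in $\LinTGrz$. The latter is ruled out since $L' \supsetneq \LinTGrz$, so $L'$ is locally tabular. Therefore $\LinTGrz$ is not locally tabular while every proper extension of it is locally tabular, i.e.\ $\LinTGrz$ is pre-locally tabular.
\end{proof}
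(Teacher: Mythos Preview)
Your proof is correct and follows essentially the same approach as the paper: invoke Proposition~\ref{prop:LinTGrz_not_1finite} to show $\LinTGrz$ is not locally tabular, and apply Theorem~\ref{thm:LinTGrz_cover} to any proper extension. The only difference is that you spell out why $\LinTGrz \supseteq \LinT$, which the paper leaves implicit.
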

\hide{
\begin{proof}
    This logic is not locally tabular (and even not \(1\)-finite) by Proposition~\ref{prop:bh_in_fragments}.\IS{?}
    For any proper extension~\(L\) of \(\LinTGrz\) we have \(\LinT \subseteq L,\) so Theorem~\ref{thm:LinTGrz_cover} implies that \(L\) is locally tabular.
\end{proof}

}

\subsection{An example with the universal modality}\label{subsec:uni}
%\subsection{Pre-locally tabular logic with the universal modality}
%\IS{I suggest: Logics with the universal modality}
%\IS{Explain that (two?) examples follow from earlier papers}

%\subsection{\(\Grz3\Udd\) is pre-locally tabular}

\ISLater{Explain that adding the universal modality is not enough}

%\IS{We need to explain that this logic is above $\LS{4}\times \LS{4}$.We also need to explain that it is not above $\LPN$.}
%\VS{Done, see the new paragraph after Proposition~\ref{prop:Grz3Udd_fmp}}.

  For a unimodal logic $L$, let 
  %Then \(L\U\) denotes the bimodal logic \(\LCom{L}{\LS5} + \Di_1 p \to \Di_2 p.\)\IS{This is cryptic.}
      \(L\U\) denote its expansion with the {\em universal modality}, that is  the bimodal logic 
    %\(\LCom{L}{\LS5} + \Di_1 p \to \Di_2 p.\)\IS{This is cryptic.}
    \(\fusion{L}{\LS5} + \Di_1 p \to \Di_2 p.\)  
  For a class~\(\clF\) of unimodal Kripke frames, let~\(\clF^\U\)
  denote the class of bimodal frames~\(\{(X,R,\nabla) \mid (X,R)\in \clF\}\).
    \ISLater{Notation should be unified; the universal modality should be named.} \ISLater{ \(\clF^\U\) is not good}
    
%\begin{definition}

  Let~\(\dd\) be the bimodal formula~\(\Di_2 p \land \Di_2 q \to \Di_2(\Di_1 p \land \Di_1 q)\).
  For a bimodal logic~\(L,\) let~\(L^{\downarrow}\) denote \(L + \dd.\)
%\end{definition}

% \begin{proposition}\cite[Lemma~7]{ISh_universal}
%   Let~\(F = (X,R_1,R_2)\)
% \end{proposition}

\begin{proposition}\cite[Theorem~8]{ISh_universal}\label{prop:universal_dd}
  Let~\(\clF\) be a class of rooted Kripke frames with a reflexive relation, and let~\(\Log\clF = L.\)
  If~\(\clF\) is closed under taking rooted subframes, then~\(\Log(\clF^\U) = L\Udd.\)
\end{proposition}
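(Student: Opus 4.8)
The plan is to prove the two inclusions of $\Log(\clF^u)=L\Udd$ separately; soundness is routine and completeness is the substantive part. For soundness, fix $(X,R)\in\clF$ and consider $(X,R,\nabla)$. The axioms $\rep{L}{\Di_1}$ hold because $(X,R)\models L$; the $\LS5$-axioms for $\Di_2$ hold because $\nabla$ is an equivalence relation; and $\Di_1 p\to\Di_2 p$ holds because $R\subseteq\nabla$. The only nontrivial point is $\dd$: on a frame whose second relation is universal its first-order correspondent is ``any two points have a common $R$-predecessor'', which holds because $(X,R)$ is rooted, the root being such a predecessor. This is the only place rootedness enters, and it is what motivates adding $\dd$.

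For completeness I would argue contrapositively: given $\vf\notin L\Udd$, produce a refutation of $\vf$ on some $(X,R,\nabla)$ with $(X,R)\in\clF$. First pass to a convenient model of $L\Udd$. In the canonical model of $L\Udd$, since $\Di_1 p\to\Di_2 p$, the $\LS5$-axioms, and the Sahlqvist formula $\dd$ are all canonical, the canonical relations satisfy $S_1\subseteq S_2$, $S_2$ is an equivalence relation, and the $\dd$-correspondent holds. Restricting to the $S_2$-class of a point where $\vf$ fails gives, since $S_1\subseteq S_2$, a generated submodel, hence a general frame $G=(V,S_1\restr V,\nabla_V,A)$ that validates $L\Udd$, refutes $\vf$, has $S_2$ genuinely universal, and whose algebra $A$ validates $\rep{L}{\Di_1}$.

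The remaining and principal step is to replace $G$ by a frame from $\clF^u$. Using the normal form of formulas with the universal modality, the failure of $\vf$ at a point of $G$ reduces to a finite ``query'': for one disjunct, a unimodal formula must be true somewhere, finitely many unimodal formulas must each be true somewhere, and one unimodal formula must be true everywhere. I would then realize this query on a rooted frame in $\clF$, using completeness of $L=\Log\clF$ together with closure of $\clF$ under rooted subframes to discharge the ``somewhere'' requirements below a common point, and adding $\nabla$ to the resulting unimodal model to obtain the refutation in $\clF^u$. The main obstacle, I expect, is the faithfulness of this reduction: one must verify that $\dd$ is exactly strong enough for the several ``somewhere'' clauses to be satisfiable jointly below a single point of a rooted $\clF$-frame (again the root does the work), and one must avoid using the underlying Kripke frame of $G$ directly, since $L$ need not be canonical --- the query has to be discharged by a construction carried out inside $\clF$ rather than by cutting down the canonical frame.
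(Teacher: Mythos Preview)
The paper does not supply its own proof of this proposition; it is simply cited from \cite[Theorem~8]{ISh_universal}, so there is no argument in the paper to compare yours against.

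Your sketch follows the standard route for results of this kind: soundness by inspection of the axioms on frames $(X,R,\nabla)$, and completeness via the canonical model together with the normal form of formulas over the universal modality, reducing the refutation to a finite package of purely $\Di_1$-formulas to be realized in a rooted $\clF$-frame. That is exactly the method one expects from the literature on the universal modality, and closure under rooted subframes is indeed what lets one realize the ``somewhere'' clauses inside a single rooted frame.

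One caveat on the soundness step for $\dd$: you write that the root is a common $R$-predecessor of any two points, but rootedness only gives $R^*$-reachability from the root. For the root $r$ to satisfy $rRy$ for all $y$ you need $R$ to be transitive (so that $R^*=R$), not merely reflexive as in the stated hypothesis. In the paper's only application (Proposition~\ref{prop:Grz3Udd_fmp}, with $\clF=\{(m,\le)\mid m<\omega\}$) the relation is a partial order, so the point is moot there; but as written, your argument for the validity of $\dd$ needs this extra assumption, and you should either add it or give a different argument.
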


\begin{proposition}\label{prop:Grz3Udd_fmp}
  \(\Grz.3\Udd = \Log\{(m,\le,\nabla)\mid m < \omega\}\).
\end{proposition}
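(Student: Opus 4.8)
The plan is to obtain this as a direct application of Proposition~\ref{prop:universal_dd} to the class \(\clF = \{(m,\le) \mid 1\le m<\omega\}\) of all finite chains. First I would record the three hypotheses that the proposition requires. Each \((m,\le)\) is a rooted Kripke frame (with root the bottom element \(0\)) whose relation is reflexive, so \(\clF\) is a class of rooted reflexive Kripke frames. Next, \(\Log\clF = \Grz.3\): this is the completeness of \(\Grz.3\) with respect to finite linear orders already recalled above (restricting to \(m\ge 1\) changes nothing, as the empty frame validates every formula). Finally, \(\clF\) is closed under taking rooted subframes: a point-generated subframe \(\gen{(m,\le)}{a}\) is the up-set \(\{b \mid a\le b\}\) with the induced order, which is again a finite chain and hence, up to isomorphism, a member of \(\clF\).

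With these verified, Proposition~\ref{prop:universal_dd} yields \(\Log(\clF^u) = \Grz.3\Udd\). It then remains only to identify \(\clF^u\): by the definition of the operation \((-)^u\) and since \(\nabla_X = X\times X\), we have \(\clF^u = \{(m,\le,\nabla) \mid m<\omega\}\), which gives \(\Grz.3\Udd = \Log\{(m,\le,\nabla)\mid m<\omega\}\), as claimed.

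There is no genuine obstacle here; the proof is essentially a bookkeeping check that the ambient hypotheses of Proposition~\ref{prop:universal_dd} are met. The only point deserving a word of care is to restrict attention to \(m\ge 1\) (or otherwise note that the empty frame contributes nothing to the logic), so that all members of \(\clF\) are honestly rooted, as the cited proposition demands.
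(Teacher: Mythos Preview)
Your proposal is correct and follows exactly the approach of the paper: invoke the completeness of \(\Grz.3\) with respect to finite chains and apply Proposition~\ref{prop:universal_dd}. The paper's proof is a single line to this effect; your version just spells out the hypothesis checks (rootedness, reflexivity, closure under rooted subframes) more explicitly.
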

\begin{proof}
  Follows from the characterization 
  $\Grz.3  = \Log\{(m,\le)\mid m < \omega\}$  
  %Definition~\ref{def:Grz3} 
  and Proposition~\ref{prop:universal_dd}.
\end{proof}

%It follows from  \cite{Guram-MH-3-2000} and \cite{Guram_universal}
%that  \(\Grz.3\Udd\) is pre-locally tabular. 

%We will show that \(\Grz.3\Udd\) is also a pre-locally tabular extension of~\(\LS4^2.\)
Observe that %\(\LS4^2 = \LCom{\LS4}{\LS4} \subseteq \LCom{\Grz3}{\LS5}\subseteq \Grz3\Udd.\) 
$\LS4^2 = \LCom{\LS4}{\LS4} \subseteq \LCom{\Grz.3}{\LS5}$ (trivially) and that $\LCom{\Grz.3}{\LS5}\subseteq \Grz.3\Udd$ (Proposition \ref{prop:Grz3Udd_fmp}). 

It is also easy to notice that \(\Grz.3\Udd\) is not an extension of \(\LPN\): the frame \((2,\le,\nabla)\) is a \(\Grz.3\Udd\)-frame that refutes~\(\LPN\) (see Corollary~\ref{cor:S4^2_ne_PN}).\ISLater{example in Corollary?}

% \begin{proposition}\label{prop:univ_relation}
%   Let~\(G = (X,R_1,R_2,A)\) be a rooted subframe of the \(\kappa\)-canonical general frame for some bimodal logic \(L \supseteq \Grz3\Udd.\)
%   Then~\(R_2 = \nabla_X.\)\IS{This is about any logic with universal modality. I suggest to remove it, and use the fact where it is needed}
% \end{proposition}
% \begin{proof}
%   Observe that~\(L\) contains a canonical unimodal logic~\(\rep{\LS5}{\Di_2}\) \VS{Explain why canonicity transfers to the bimodal case, see Telegram}, so~\(R_2\) is an equivalence relation.
%   The containment axiom is also canonical, so \(R_1\subseteq R_2.\)
%   Then~\(R_1\circ R_2 = R_2,\) and since~\(G\) is rooted, \(R_2\) has only one equivalence class.
% \end{proof}

% \begin{proposition}\label{prop:bh_canonical}\cite[Theorem~6.4]{Seg_Essay}
%   \VSLater{Move to the proof if used only once}
%   The logic~\(\LS4[n]\) is canonical for all~\(n < \omega.\)
% \end{proposition}
 
The logic \(\Grz.3\Udd\) is pre-locally tabular. 
We give two arguments for this.

One argument follows from results on intuitionistic logics with universal modality. 
Let $L$ be the intuitionistic modal logic of the frames 
$\{(m,\le,\nabla)\mid m < \omega\}$, where $\le$ interprets the intuitionistic implication. 
In cite \cite[Section 8]{Guram-MH-3-2000}, it was shown that every proper extension of $L$ is tabular, that is characterized by a finite frame.  
For intuitionistic logics, and well as for logics above  $\LS{4}$, it is known that 
a logic is tabular iff it has finitely many extensions
\cite[Theorem 12.9]{CZ}.\ISLater{Original ref?}
This characterization transfers for the 
intuitionistic modal logics with the universal modality \cite[Theorem 14]{Guram-MH-3-2000}, 
and it is straightforward that it also transfers for extensions of $\LS{4}\U$.
In  \cite[Corollary 42]{Guram_universal}, it was shown that the lattice of 
intuitionistic modal logics with the universal modality is isomorphic to the lattice of  extensions of $\Grz\U$.\footnote{We are grateful to G. Bezhanishvili for providing these arguments.}
Since $\Log\{(m,\le,\nabla)\mid m < \omega\}$ is $\Grz.3\Udd$, 
the latter logic is pre-locally tabular. In fact, this reasoning implies that 
$\Grz.3\Udd$ is {\em pre-tabular}, that is every its proper extension is tabular. 

A purely modal argument is the following. 
Let $L$ be a non-locally tabular extension of $\Grz.3\Udd$,
 \(G = (X,R_1,R_2,A)\) the \(\omega\)-canonical general frame for $L$, 
 $G_0=(X,R_1,A)$, and $L_0=\Log\,G_0$. Clearly, $L_0\U$ is contained in $L$, and so is not locally tabular. 
  It is  
  straightforward that the enrichment of a locally tabular modal logic 
  with the universal modality is locally tabular \cite[Corollary~1]{Tacl2017}.
  Hence, $L_0$ is a non-locally tabular extension of $\LS{4}$. 
So  $L_0$ is contained in $\Grz.3$ \cite[Propositions~2.1, 2.4]{Maks1975LT}, and so  $L_0\subset \Log(m,\le)$ for each~\(m < \omega\).
 By Jankov-Fine theorem, for each~\(m < \omega\) there exists ~\(a_m\in X\) and a p-morphism~\(f_m:\:G_0\langle a_m\rangle \toto (m,\le)\).  
 Since $L$ contains \(\rep{\LS5}{\Di_2}\) and~\(\Di_1 p \to \Di_2 p,\)
 the second relation in $G\langle a_m\rangle$ is universal, 
 and hence \(f_m:\:G\langle a_m\rangle\toto(m,\le,\nabla)\).
Thus, $L=\Log G \subseteq   \Log\{(m,\le,\nabla)\mid m < \omega\}= \Grz.3\Udd$, and  so $L=\Grz.3\Udd$.  

Combining these arguments with Proposition  \ref{prop:S5square_product_matching}, we obtain 
\begin{proposition}[Corollary of \cite{Guram-MH-3-2000} and \cite{Guram_universal}]
\label{prop:Grz3Udd}
  \(\Grz.3\Udd\) is a pre-locally tabular extension of $\LS{4}\times \LS{5}$.
\end{proposition}

\hide{
Obviously, \(\Grz.3\Udd\) is an extension of \(\Grz\U\).
According to~\cite{Guram_universal}, a variant of G\"odel's translation provides a duality between the normal extensions of \(\Grz\U\) and a family of monadic intuitionistic logics.
The intuitionistic counterpart of~\(\Grz.3\Udd\) was described in~\cite{Guram-MH-3-2000}: all its proper extensions were shown to be tabular.
By the duality\IS{??}, \(\Grz.3\Udd\) shares the same property.
Finally, by Proposition~\ref{prop:bh_in_fragments}, \(\Grz.3\Udd\) is not locally tabular itself.
Thus we have the following theorem.
}

\hide{
\begin{theorem}[Corollary of \cite{Guram-MH-3-2000} and \cite{Guram_universal}]
\label{thm:Grz3Udd}
  \(\Grz.3\Udd\) is pre-locally tabular.
\end{theorem}
\begin{proof}
%We show that its proper extensions are locally tabular. 
We give two arguments for this. 

\ISLater{Pre-tabular or pretabular?} 
One argument follows from results on intuitionistic logics with universal modality. 
Let $L$ be the intuitionistic modal logic of the frames 
$\{(m,\le,\nabla)\mid m < \omega\}$, where $\le$ interprets the intuitionistic implication. 
In cite \cite[Section 8]{Guram-MH-3-2000}, it was shown that every proper extension of $L$ is tabular, that is characterized by a finite frame.  
For intuitionistic logics, and well as for logics above  $\LS{4}$, it is known that 
a logic is tabular iff it has finitely many extensions
\cite[Theorem 12.9]{CZ}.\ISLater{Original ref?}
This characterization transfers for the 
intuitionistic modal logics with the universal modality \cite[Theorem 14]{Guram-MH-3-2000}, 
and it is straightforward that it also transfers for extensions of $\LS{4}\U$.
In  \cite[Corollary 42]{Guram_universal}, it was shown that the lattice of 
intuitionistic modal logics with the universal modality is isomorphic to the lattice of  extensions of $\Grz\U$.\footnote{We are grateful to G. Bezhanishvili for providing these arguments.}
Since $\Log\{(m,\le,\nabla)\mid m < \omega\}$ is $\Grz.3\Udd$, 
the theorem follows. 

A purely modal argument is the following. 
Let $L$ be a non-locally tabular extension of $\Grz.3\Udd$,
 \(G = (X,R_1,R_2,A)\) the \(\omega\)-canonical general frame for $L$, 
 $G_0=(X,R_1,A)$, and $L_0=\Log\,G_0$. Clearly, $L_0\U$ is contained in $L$, and so is not locally tabular. 
  It is  
  straightforward that the enrichment of a locally tabular modal logic 
  with the universal modality is locally tabular \cite[Corollary~1]{Tacl2017}.
  Hence, $L_0$ is a non-locally tabular extension of $\LS{4}$. 
So  $L_0$ is contained in $\Grz.3$ \cite[Propositions~2.1, 2.4]{Maks1975LT}, and so  $L_0\subset \Log(m,\le)$ for each~\(m < \omega\).
 By Jankov-Fine theorem, for each~\(m < \omega\) there exists ~\(a_m\in X\) and a p-morphism~\(f_m:\:G_0\langle a_m\rangle \toto (m,\le)\).  
 Since $L$ contains \(\rep{\LS5}{\Di_2}\) and~\(\Di_1 p \to \Di_2 p,\)
 the second relation in $G\langle a_m\rangle$ is universal, 
 and hence \(f_m:\:G\langle a_m\rangle\toto(m,\le,\nabla)\).
Thus, $L=\Log G \subseteq   \Log\{(m,\le,\nabla)\mid m < \omega\}= \Grz.3\Udd$, and  so $L=\Grz.3\Udd$.  \end{proof}
}%long hide
\hide{
\begin{remark}
  One can also obtain this result by a direct semantic argument.
  Let \(L\) be an extension of \(\Grz.3\Udd\).
  If \(L\) contains \(\rep{\bh_n}{\Di_1}\), 
  then \(L\) is an extension of \((\Grz.3 + \bh_n)\U\).
  By Segerberg-Maksimova criterion, \(\Grz.3 + \bh_n\) is locally tabular.
  The local tabularity is preserved in~\(\Grz.3 + \bh_n)\U\) \cite[Corollary~1]{Tacl2017}, so \(L\) is locally tabular.
  The converse follows from~\ref{prop:bh_in_fragments}.
  See also~\cite[Corollary 4.5]{GBez_Meadors_MS4}.
\end{remark}

}
\improve{Say that the first argument gives pre-tabularity}

\hide{
%\IS{What do we do with this fact/its proof?}
\VS{Old proof:}
\begin{proof}
  Let~\(L\) be a bimodal logic that extends~\(\Grz.3\Udd.\)

  We will prove that~\(L\) is locally tabular if~\(\rep{\bh_n}{\Di_1}\in L\)
  % \IS{Let us change this quotient notation}\VS{How does this look?}
  for some~\(n<\omega.\) \IS{This is trivial: if a logic is locally tabular, then its enrichment with the universal modality is.}
  Let~\(G=(X,R_1,R_2,A)\) be any rooted subframe of the~\(k\)-canonical general frame for~\(L\) for some~\(k < \omega.\)
  We need to show that~\(A\) is finite.
  Observe that~\(\rep{(\LS4[n])}{\Di_1} \subseteq L.\)
  The logic~\(\LS4[n]\) is canonical\IS{term}~\cite[Theorem~6.4]{Seg_Essay}, thus so is~\(\rep{\LS4[n]}{\Di_1}.\)
  Then \((X,R_1)\models \LS4[n].\)
  By Segerberg-Maksimova criterion,~\(\Log(X,R_1)\) is locally tabular.
  Then any finitely generated unimodal subalgebra of~\(\Alg(X,R_1)\) is finite.
  We define the unimodal algebra \(\A\) to be the reduct of~\(\Alg G\) on the first modality.
  Clearly, \(\A\) is a subalgebra of~\(\Alg(X,R_1)\).
  It remains to show that \(\A\) is finitely generated.
  % Since \(L\subseteq\Log G\) contains a canonical logic~\(\rep{\LS5}{\Di_2}\), \(R_2\) is an equivalence relation.
  % The containment axiom is also canonical, so \(R_1\subseteq R_2.\)
  % Then~\(R_1\circ R_2 = R_2,\) and since~\(G\) is rooted, \(R_2\) has only one equivalence class.
  The canonicity of~\(\rep{\LS5^2}{\Di_1}\) and~\(\Di_1 p \to \Di_2 p\) implies that \(R_2\) is an equivalence relation that contains~\(R_1\).
  Since \(G\) is rooted, it follows that~\(R_2 = \nabla_X\), thus~\(R_2\inv[u] = X\) for any~\(u\in A\), so \(\A\) and \(\Alg G\) have the same generators, hence \(\A\) is finitely generated, as desired.

  Now assume that~\(\rep{\bh_n}{\Di_1}\not\in L\) for all~\(n<\omega\).
  Then the \(\omega\)-canonical general frame~\(G = (X,R_1,R_2,A)\) for~\(L\) refutes~\(\rep{\bh_n}{\Di_1}\) for all \(n\).
  Thus~\((X,R_1,A)\) has an infinite height.\ISLater{\IS{This step is unclear}\VS{Fixed, I guess}}\IS{Q1: What is the height of a general frame?}
  Since~\(\rep{\LS4}{\Di_1}\) is canonical, \(\Log(X,R_1)\) is a non-locally tabular extension of~\(\LS4\) by Segerberg-Maksimova criterion.
  Hence, $\Log(X,R_1)$\IS{Q2:   \(\Log(X,R_1)\) or    \(\Log(X,R_1,A)\)?}   is contained in $\Grz.3$, and so  
 $\Log(X,R_1)\subset \Log(m,\le)$ for any~\(m < \omega.\)
  %By \ref{thm:Grz3_cover}, \(\Log(X,R_1) \subseteq \Grz3 \subseteq \Log(m,\le)\) for any~\(m < \omega.\)
  By Jankov-Fine theorem, for any~\(m < \omega\) there exists a point~\(a_m\in X\) and a p-morphism~\(f_m:\:(X,R_1,A)\langle a_m\rangle \toto (m,\le)\).  \ISLater{\VSLater{State in preliminaries?} -Give a ref}
  % Let~\(f_m\) be the p-morphism.
  We finish the proof by showing that~\(f_m:\:F\langle a_m\rangle\toto(m,\le,\nabla)\) for all~\(m,\) and therefore~\(\Grz.3\Udd \subseteq \Log F \subseteq L.\)\IS{It was given that $\Grz.3\Udd \subseteq L.$}
  Trivially,~\(f_m\) satisfies the forth condition for~\(R_2.\)
  The back condition is satisfied since~\(R_2 = \nabla\)\IS{This is not true} by the canonicity of~\(\rep{\LS5}{\Di_2}\) and~\(\Di_1 p \to \Di_2 p.\)
  The admissibility condition is clearly satisfied by the construction of~\(f_m\).
  \IS{OK, above I gave another version of this proof.}
  \ISLater{
  \IS{Should not we speak about general frames here?}\VS{Now we do.}}
\end{proof}
\hide{
\begin{corollary}
    \(\Log\{(m,\nabla,\le) \mid m < \omega\}\) is pre-locally tabular.
\end{corollary}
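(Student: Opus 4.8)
The plan is to obtain this corollary from Theorem~\ref{thm:Grz3Udd} by a symmetry argument, using the fact that swapping the two modalities is an automorphism of the lattice of normal bimodal logics that preserves local tabularity. Concretely, for a bimodal formula $\vf$ let $\vf^{\mathrm{sw}}$ be the result of interchanging every occurrence of $\Di_1$ and $\Di_2$, and for a bimodal logic $L$ let $L^{\mathrm{sw}} = \{\vf^{\mathrm{sw}} \mid \vf \in L\}$. First I would record the routine facts: $\vf \mapsto \vf^{\mathrm{sw}}$ is an involution on bimodal formulas that respects Boolean connectives, modus ponens, substitution and monotonicity, so $L^{\mathrm{sw}}$ is again a normal bimodal logic and $(-)^{\mathrm{sw}}$ is an inclusion-preserving involution on the set of bimodal logics. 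Moreover $\vf \mapsto \vf^{\mathrm{sw}}$ restricts to a bijection on $k$-formulas for each $k$, and $\vf \leftrightarrow \psi \in L$ iff $\vf^{\mathrm{sw}} \leftrightarrow \psi^{\mathrm{sw}} \in L^{\mathrm{sw}}$; hence $L$ and $L^{\mathrm{sw}}$ have, for every $k$, the same number of pairwise nonequivalent $k$-formulas, so $L$ is locally tabular iff $L^{\mathrm{sw}}$ is. Consequently $(-)^{\mathrm{sw}}$ maps pre-locally tabular logics to pre-locally tabular logics: if $L$ is pre-locally tabular and $L' \supsetneq L^{\mathrm{sw}}$, then $(L')^{\mathrm{sw}} \supsetneq L$, so $(L')^{\mathrm{sw}}$ is locally tabular and therefore so is $L' = ((L')^{\mathrm{sw}})^{\mathrm{sw}}$, while $L^{\mathrm{sw}}$ is not locally tabular because $L$ is not.

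Next I would identify $\Log\{(m,\nabla,\le) \mid m < \omega\}$ as $(\Grz.3\Udd)^{\mathrm{sw}}$. On the semantic side, interchanging $\Di_1$ and $\Di_2$ in a formula corresponds to interchanging the two relations of a bimodal Kripke frame: a straightforward induction on $\vf$ shows that $(X,R_1,R_2) \models \vf$ iff $(X,R_2,R_1) \models \vf^{\mathrm{sw}}$. Hence for any class $\clF$ of bimodal frames, $\Log\{(X,R_2,R_1) \mid (X,R_1,R_2) \in \clF\} = (\Log \clF)^{\mathrm{sw}}$. Applying this with $\clF = \{(m,\le,\nabla) \mid m < \omega\}$ and invoking Proposition~\ref{prop:Grz3Udd_fmp} (which gives $\Grz.3\Udd = \Log\{(m,\le,\nabla) \mid m < \omega\}$), we get $\Log\{(m,\nabla,\le) \mid m < \omega\} = (\Grz.3\Udd)^{\mathrm{sw}}$.

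Combining the two steps finishes the proof: $\Grz.3\Udd$ is pre-locally tabular by Theorem~\ref{thm:Grz3Udd}, hence so is its swap $\Log\{(m,\nabla,\le) \mid m < \omega\}$. There is no real obstacle here; the only point that needs to be stated with a little care is the claim that $(-)^{\mathrm{sw}}$ preserves $k$-finiteness, and that is immediate once one observes that $\vf \mapsto \vf^{\mathrm{sw}}$ is an equivalence-respecting bijection on $k$-formulas. (If one prefers an even more economical wording, one can simply note that $(m,\nabla,\le)$ is the image of $(m,\le,\nabla)$ under swapping coordinates and that the whole development leading to Theorem~\ref{thm:Grz3Udd} is invariant under this swap.)
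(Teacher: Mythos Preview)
Your proposal is correct and is exactly the intended argument: the paper states this corollary without proof immediately after Theorem~\ref{thm:Grz3Udd}, relying on the evident symmetry under swapping $\Di_1$ and $\Di_2$, which is precisely what you have spelled out in detail.
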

}
\hide{
\IS{
From Guram:

``
This logic is not only pre-locally tabular, but even pre-tabular. There are various ways to see this. For example, you can use that its intuitionistic fragment is pre-tabular, which is proved here:

https://link.springer.com/article/10.1023/A:1005285631357

and apply the extension of the Blok-Esakia theorem to the universal modality, which is proved here:

https://www.sciencedirect.com/science/article/pii/S0168007209001419?via%3Dihub

Alternatively, if you look at the proof that the intuitionistic fragment of this logic is pre-tabular, which is semantic, it directly applies to $Grz.3_u$.
''
}
\ISLater{
So we need to check this (I believe it is in Section 6 in the first paper.
}
}
}%long hide 

%\subsection{Six pre-locally tabular logics with \(\Grz3\Udd\)-clusters}
%\subsection{Six matches}
\subsection{Six more examples}
%\IS{More examples over $\LS{4}\times \LS{5}$? - only two}
\begin{definition}
    We define the families of frames and their logics:
    \begin{align*}
        \MatchF^1_1(m) &= (m,\le,\nabla) \oplus_1 \sngl; & \MatchL^1_1 &= \Log\{\MatchF^1_1(m) \mid m < \omega\};
        \\
        \MatchF^1_2(m) &= (m,\le,\nabla) \oplus_2 \sngl; & \MatchL^1_2 &= \Log\{\MatchF^1_2(m) \mid m < \omega\};
        \\
        \MatchF^1_{12}(m) &= (m,\le,\nabla) \oplus \sngl; & \MatchL^1_{12} &= \Log\{\MatchF^1_{12}(m) \mid m < \omega\}.
    \end{align*} \ISLater{This is formally inaccurate: $\oplus$ is defined for disjoint frames}
    We define \(\MatchL^2_1,\,\MatchL^2_2,\) and \(\MatchL^2_{12}\) by interchanging modalities in the above definitions. For example, \(\MatchL^2_1 = \Log\{(m,\nabla,\le) \oplus_1 \sngl \mid m < \omega\}\).
\end{definition}

%\IS{Let us call them {\em matches}}
%\IS{``Six matches'' is, unfortunately, quite weak prose, so perhaps we do not want these connotations. But it is good to have a term.} 

We will show that these logics are pre-locally tabular extensions of~\(\LS4^2\).
The following proposition can be verified by straightforward semantic argument.
\begin{proposition}\label{prop:MatchL_axioms}
    Each of the modal logics \(\MatchL^1_1,\,\MatchL^1_2,\,\MatchL^1_{12}\) is an extension of \(\LCom{\Grz.3}{\LS4.3[2]}\) containing the following formulas:
    \begin{enumerate}
        \item the transitivity formula \(\Div \Div p \to \Div p\) (assuming our standard abbreviation \(\Div \vf = \Di_1 \vf \lor \Di_2 \vf\))\footnote{Notice that this formula is equivalent to~\(\rp_1(\Div)\) in the extensions of~\(\LS4^2\).};
        \item the downward directedness formula~\(\dd\);
        \item the formula \(\rep{\mck}{\DiM},
        \) where \(\mck\) is the McKinsey formula \(\Box \Di p \to \Di \Box p\);
        \item the bounded height formula \(\rep{\bh_2}{\DiM}\);
        \item the presymmetry formula \(\presym_2.\)
    \end{enumerate}
    Moreover, we have:
    \begin{enumerate}
        \item The symmetry axiom \(p \to \Box_2\Di_2 p\) belongs to \(\MatchL^1_1\);
        % , but not to \(\MatchL^1_2\) or \(\MatchL^1_{12}\);
        % \item \(\rep{\mck}{\Di_2}\) belongs to~\(\MatchL^1_2\) and~\(\MatchL^1_{12}\);
        \item \(p \land \Di_1 q \to \Di_2(q\land \Di_2 p)\) belongs to~\(\MatchL^1_2;\)
        \item \(\rep{\mck}{\Di_2}\) and \(p\land \Di_2 q \to \Di_1 q \lor \Di_2 (q \land \Di_2 p)\) belong to \(\MatchL^1_{12}\).
    \end{enumerate}
\end{proposition}

\begin{proposition}\label{prop:Grz3_subreduction}\cite[Proposition~9.4]{CZ}
    A general frame \(G\models \LS4\) validates \(\Grz.3\) iff for any $Y\in \Alg G$, there is no p-morphism from \(G\restr Y\) to the two-element cluster \((2,\nabla)\) and no p-morphism from \(G\restr Y\) to the partially ordered frame shown below:
    \begin{center}
        \includegraphics[width=0.1\linewidth]{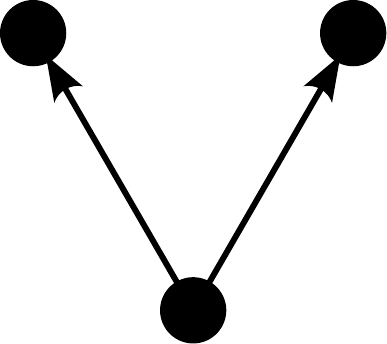}
    \end{center} 
\end{proposition}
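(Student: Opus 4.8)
The plan is to prove both implications by contraposition, using that the $k$-canonical general frames occurring here (and their admissible subframes) are differentiated and compact, together with the standard refutation patterns of the two axioms of $\Grz.3$ over $\LS4$. First, two elementary observations. The two-element cluster $(2,\nabla)$ refutes the Grzegorczyk axiom -- evaluate it at one point of the cluster with $p$ true exactly at the other point -- while it validates the $.3$-axiom; dually, the fork validates the Grzegorczyk axiom (being a poset) but refutes the $.3$-axiom -- evaluate the latter at the root, with $p$ and $q$ true at the two maximal points. Hence $\Grz.3 \not\subseteq \Log (2,\nabla)$ and $\Grz.3 \not\subseteq \Log (\mathrm{fork})$, and neither of these two logics contains the axiom that distinguishes the other.

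For ``$\Leftarrow$'', suppose $G \not\models \Grz.3$; then $G$ refutes the $.3$-axiom or the Grzegorczyk axiom. If $G$ refutes the $.3$-axiom, the refuting valuation exhibits a point with two $R$-incomparable successors that are separated by admissible sets; since $G$ is differentiated, one obtains from this, by the standard translation of a forbidden configuration into a subreduction, an admissible $Y \in \Alg G$ and a p-morphism $G\restr Y \toto \mathrm{fork}$ obtained by collapsing the relevant cones. If $G$ refutes the Grzegorczyk axiom, then, using that $G$ is \emph{compact}, the refutation is traced to a proper cluster $C$ that is separated from the rest of its generated subframe by an admissible set; then $C$ itself, together with a two-block admissible partition of $C$, is an admissible subframe mapping onto $(2,\nabla)$. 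When $G$ has finite height -- the only case needed in this paper -- this last step is immediate, since there the Grzegorczyk axiom simply expresses the absence of a proper cluster.

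For ``$\Rightarrow$'', suppose $f\colon G\restr Y \toto H$ with $Y \in \Alg G$ and $H \in \{(2,\nabla),\mathrm{fork}\}$. By the p-morphism lemma $\Log(G\restr Y) \subseteq \Log H$, and by the observations above $\Grz.3 \not\subseteq \Log H$, hence $\Grz.3 \not\subseteq \Log(G\restr Y)$; it remains to transfer non-validity from the admissible subframe to $G$ itself. For the fork this is direct: the refuting pattern of the $.3$-axiom already lives inside the configuration preserved in $G$. For $(2,\nabla)$ one first upgrades $f$ to a \emph{cofinal} subreduction of $G$ onto $(2,\nabla)$ -- possible because $(2,\nabla)$ is its own final cluster -- and then invokes the cofinal-subframe characterization of $\Grz$; in the finite-height case it again suffices to note that an admissible subframe mapping onto $(2,\nabla)$ witnesses an admissible proper cluster in $G$, so $G$ refutes the Grzegorczyk axiom. \textbf{Main obstacle.} The one genuinely nontrivial ingredient is the behaviour of the Grzegorczyk axiom under subreductions over \emph{arbitrary} (not necessarily finite-height) descriptive $\LS4$-frames: the equivalence ``$G$ refutes the Grzegorczyk axiom $\iff$ $G$ admits a (cofinal) subreduction onto $(2,\nabla)$'', which is exactly where compactness is used and where the distinction between plain and cofinal subreductions must be accounted for; the rest is routine verification of p-morphism conditions.
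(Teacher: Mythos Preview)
The paper does not prove this proposition; it is quoted from \cite[Proposition~9.4]{CZ} without argument, so there is no in-paper proof to compare your attempt against.

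Your sketch has the right shape but imports machinery that is not needed and, as you yourself flag, leaves the decisive step open. In fact the statement admits a short direct proof for \emph{arbitrary} general $\LS4$-frames --- no differentiation, no compactness, no cofinality. For the ``$\Rightarrow$'' direction (contrapositive): given $f\colon G\restr Y\toto(2,\nabla)$ with $A=f^{-1}(0)$ admissible, evaluate $p$ as $X\setminus A$ (not as $B=f^{-1}(1)$). Then $\Grz$ fails at every $a\in A$: any $b\in R(a)\cap A$ sees, by the back condition of $f$, some $c\in B$, which in turn sees some $d\in A$, and this is exactly what makes $\Box(\Box(p\to\Box p)\to p)$ true at $a$ while $p$ is false there. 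Your proposed ``upgrade to a cofinal subreduction'' is therefore unnecessary, and the justification you give for it (``possible because $(2,\nabla)$ is its own final cluster'') is in any case not valid: cofinality is a condition on the domain $Y$ inside $G$, not on the target frame. The fork case is similar, with $p\mapsto f^{-1}(t_1)$ and $q\mapsto f^{-1}(t_2)$ refuting $.3$ at any preimage of the root.

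For the ``$\Leftarrow$'' direction (contrapositive), the constructions are equally explicit. If $\Grz$ fails at $a$ under $p\mapsto P$, let $U$ be the truth set of the antecedent $\Box(\Box(p\to\Box p)\to p)$, put $A'=U\setminus P$ and $B'=U\cap P\cap\Di A'$; these are admissible, nonempty, disjoint, and every point of each sees the other (unwind the failure at each $x\in A'$ using that $U$ is upward closed), giving $G\restr(A'\cup B')\toto(2,\nabla)$. If instead $.3$ fails under $p\mapsto P$, $q\mapsto Q$, put $A'=P\setminus\Di Q$, $B'=Q\setminus\Di P$, $C'=(\Di A'\cap\Di B')\setminus(A'\cup B')$; one checks directly that $G\restr(A'\cup B'\cup C')$ maps onto the fork. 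So what you call the ``main obstacle'' dissolves once the right admissible sets are written down; the descriptive-frame technology you reach for is not required for this particular pair of refutation frames.
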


Hence, we have 
\begin{proposition}\label{prop:Grz3_clusters}
    If a general frame \(G\) validates~\(\Grz.3\), then for any cluster \(C\) in \(G\) such that $C\in \Alg G$, the algebra of~\(G\restr C\) contains exactly two elements.\ISLater{Is it possible that the algebra is trivial?}
\end{proposition}
\begin{proof}
    By the definition of a cluster, this algebra is nontrivial.
    Assume for contradiction that~\(\Alg(G\restr C)\) contains three distinct elements.
    Then it contains two non-empty and disjoint sets \(A\) and \(B\).
    It easily follows that the map \(f:\:C \to 2\) given by~\(f[A] = \{0\}\) and~\(f[B] = \{1\}\) is a p-morphism \(G\restr C \toto (2,\nabla)\),
    so \(G\not\models \Grz.3\) by Proposition~\ref{prop:Grz3_subreduction}, a contradiction.
\end{proof}

\begin{theorem}
    The logics \(\MatchL^1_1,\,\MatchL^1_2,\,\MatchL^1_{12},\,\MatchL^2_1,\,\MatchL^2_2,\) and \(\MatchL^2_{12}\) are pre-locally tabular.
\end{theorem}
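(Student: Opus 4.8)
The plan is to establish pre-local tabularity for each of the six logics by re-running, in the present setting, the machinery of Sections~3 and~4: Theorem~\ref{thm:big_cluster}, the rich-cluster statement of Proposition~\ref{prop:S4N_rich_cluster}, and the cover argument of Theorem~\ref{thm:finite_height_cover}, now with \(\Grz.3\Udd\) (pre-locally tabular by Theorem~\ref{thm:Grz3Udd}) playing the part that \(\LS5^2\) plays there, and with the Grzegorczyk chains \((m,\le,\nabla)\) replacing the squares \(\rect{m}{m}\). By the symmetry between \(\Di_1\) and \(\Di_2\) it suffices to treat \(\GTL^1_1,\GTL^1_2,\GTL^1_{12}\), and the three arguments are parallel, differing only in how the tack sits over the big cluster; I describe \(\GTL^1_1\) and use the axioms collected in Proposition~\ref{prop:GTL_axioms}.

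\emph{Not locally tabular.} For a bimodal frame \((X,R_1,R_2)\) and a \(\Di_1\)-formula \(\varphi\) we have \((X,R_1,R_2)\models\varphi\) iff \((X,R_1)\models\varphi\); computing the \(R_1\)-reducts of the defining frames \(\GTF^1_j(m)\) one gets (up to isomorphism) the finite chains \((n,\le)\), so the \(\Di_1\)-fragment of \(\GTL^1_j\) is exactly \(\Grz.3\), which is not locally tabular by the Segerberg--Maksimova criterion; since a locally tabular bimodal logic has locally tabular fragments, \(\GTL^1_j\) is not locally tabular (for \(\GTL^2_j\), use the \(\Di_2\)-fragment). \emph{Proper extensions, the case \(\rep{\bh_n}{\Di_1}\in L\).} Let \(L\supsetneq\GTL^1_1\) with \(\rep{\bh_n}{\Di_1}\in L\) for some \(n\). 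Then \(L\) is locally tabular by an argument parallel to Proposition~\ref{prop:LinT_LT}: in a rooted subframe \(G=(X,R_1,R_2,A)\) of a \(k\)-canonical general frame, \((X,R_1)\models\LS4[n]\) by canonicity of \(\LS4[n]\), so \(\Log(X,R_1)\) is locally tabular; using that \(R_2\) is an equivalence relation (canonicity of \(\LS5\), since \(\rep{\bh_1}{\Di_2}\) and \(p\to\Box_2\Di_2 p\) lie in \(\GTL^1_1\)), that \(\rep{\bh_2}{\DiM}\in L\) bounds the master skeleton, and that the finitely many points above the bottom cluster are definable (Proposition~\ref{prop:definable-point}), one shows that the carrier \(A\) is generated over \(\Alg(X,R_1)\) by finitely many further sets, hence finite; Theorem~\ref{thm:big_cluster} then gives that \(L\) is locally tabular.

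\emph{Proper extensions, the case \(\rep{\bh_n}{\Di_1}\notin L\) for all \(n\).} I claim this forces \(L\subseteq\GTL^1_1\), which together with \(L\supsetneq\GTL^1_1\) is a contradiction; hence this case is vacuous and \(L\) is locally tabular by the previous paragraph. Consider the \(\omega\)-canonical general frame \(G=(X,R_1,R_2,A)\) of \(L\); it refutes every \(\rep{\bh_n}{\Di_1}\), so \((X,R_1)\) has infinite height, whence \(\Log(X,R_1)\) is a non-locally-tabular extension of \(\LS4\) and therefore \(\Log(X,R_1)\subseteq\Grz.3\). As in the second case of Theorem~\ref{thm:Grz3Udd}, for each \(m\) the Jankov--Fine construction gives a point \(a_m\) and a p-morphism of the \(R_1\)-reduct of \(G\langle a_m\rangle\) onto \((m,\le)\). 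One then upgrades this to a general-frame p-morphism \(G\langle a_m\rangle\toto\GTF^1_1(m)\), reading off the shape of \(G\langle a_m\rangle\) from the remaining axioms: \(R_2\) is a symmetric equivalence, so the bottom master cluster \(C\) of \(G\langle a_m\rangle\) forms a single \(R_2\)-class and no \(R_2\)-edge leaves \(C\); the master height is at most \(2\), and \(\rep{\mck}{\DiM}\) forces the part of \(G\langle a_m\rangle\) above \(C\) to be a single \(\DiM\)-dead-end point, which is definable and hence in \(A\); the commutator axioms \(\com\) and \(\chr\) then make \(R_1\) connect \(C\) to that point exactly as in \({\oplus_1}\). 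Composing the p-morphism on \(C\) (obtained from \(\Log(C,R_1\restr C)\subseteq\Grz.3\) via Jankov--Fine) with the collapse of the remainder, and checking admissibility from the definability of \(C\) and of the top point as in Theorem~\ref{thm:finite_height_cover}, yields \(G\langle a_m\rangle\toto\GTF^1_1(m)\). Hence \(L=\Log G\subseteq\Log G\langle a_m\rangle\subseteq\Log\GTF^1_1(m)\) for every \(m\), so \(L\subseteq\Log\{\GTF^1_1(m)\mid m<\omega\}=\GTL^1_1\), as claimed. For \(\GTL^1_2\) and \(\GTL^1_{12}\) the same argument applies, the extra axioms of Proposition~\ref{prop:GTL_axioms} (\(p\land\Di_1 q\to\Di_2(q\land\Di_2 p)\) for \(\GTL^1_2\); \(\rep{\mck}{\Di_2}\) and \(p\land\Di_2 q\to\Di_1 q\lor\Di_2(q\land\Di_2 p)\) for \(\GTL^1_{12}\)) pinning the way the tack attaches to \(C\) to \({\oplus_2}\), resp.\ \({\oplus}\); and \(\GTL^2_1,\GTL^2_2,\GTL^2_{12}\) follow by interchanging the two modalities.

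\emph{Main obstacle.} The heart of the proof is the analog of Proposition~\ref{prop:S4N_rich_cluster} used in the third paragraph --- identifying the big cluster as a \(\Grz.3\Udd\)-cluster and the surrounding frame as a tack. Unlike \(\LS5\), the logic \(\Grz\) is not canonical, so in the canonical frame of \(L\) the relation \(R_1\) is only guaranteed to be an \(\LS4.3\)-preorder (it may have proper clusters and infinite ascending chains), and \(\rep{\mck}{\DiM}\) is not canonical either, so one cannot simply read the required frame conditions off \(\kripke G\). The Grzegorczyk structure must instead be produced from the non-local-tabularity hypothesis (via infinite height and Jankov--Fine realizability), and the McKinsey and master-height axioms must be exploited through the algebra \(A\) rather than through the bare relations. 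Once this rich-cluster statement is available, reproducing the fruitfulness case analysis of Theorem~\ref{thm:finite_height_cover} and the locally-tabular case via Proposition~\ref{prop:LinT_LT} is routine.
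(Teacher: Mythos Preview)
Your plan diverges from the paper's, and the divergence creates real gaps. The paper does \emph{not} split on whether some \(\rep{\bh_n}{\Di_1}\) lies in \(L\). It assumes \(L\supseteq\GTL^1_j\) is not locally tabular and immediately applies Theorem~\ref{thm:big_cluster}: this produces a rooted subframe \(G=\gen{G_{L,k}}{C}\) of a \emph{finitely} generated canonical frame together with a definable cluster \(C\) whose complement in \(G\) is \emph{finite}. Using \(\presym_2\), the transitivity axiom \(\Div\Div p\to\Div p\), and Corollary~\ref{cor:Grz3_clusters} (the \(\Grz.3\) restriction forces \(R_1\)-clusters in the algebra to be trivial), one shows that \(C\) is a single \(R_2\)-cluster and that \(H:=G\restr C\) validates \(\Grz.3\Udd\). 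Since \(\Log H\) is not locally tabular and \(\Grz.3\Udd\) is pre-locally tabular, \(\Log H=\Grz.3\Udd\), so Jankov--Fine gives \(H\toto(m,\le,\nabla)\) for all \(m\). The remaining canonical axioms of Proposition~\ref{prop:GTL_axioms} (including, as the paper uses it, canonicity of \(\LS4^2+\rep{\mck}{\DiM}\)) then force \(\kripke G\) to be \(\kripke H\) with a single top point attached via \(\oplus_1\), \(\oplus_2\), or \(\oplus\), and Lemma~\ref{lem:sum_pmorphisms-f} yields \(G\toto\GTF^1_j(m)\); hence \(L\subseteq\GTL^1_j\), so \(L=\GTL^1_j\).

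Your Case~2 runs Jankov--Fine on the \(R_1\)-reduct of the \(\omega\)-canonical frame to get \(f_m:(X,R_1,A)\langle a_m\rangle\toto(m,\le)\) and then asserts this ``upgrades'' to a bimodal p-morphism \(G\langle a_m\rangle\toto\GTF^1_1(m)\). But \(\dom G\langle a_m\rangle=R_1(a_m)\cup R_2(a_m)\), and \(f_m\) is undefined on \(R_2(a_m)\setminus R_1(a_m)\); even on \(R_1(a_m)\) nothing forces \(f_m\) to respect \(R_2\) --- for instance the \(f_m\)-preimage of the top of the chain need not be a single \(R_2\)-class, which is exactly what \(\GTF^1_1(m)\) demands. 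The paper avoids this by running Jankov--Fine on the \emph{bimodal} cluster \(H\), where \(R_2\) is universal, so compatibility with \(R_2\) is automatic. Your Case~1 leans on ``the finitely many points above the bottom cluster are definable'', but master height \(\le 2\) plus linearity only tells you \(X\setminus C\) is a single master cluster, not that it is finite; that finiteness is precisely what Theorem~\ref{thm:big_cluster} supplies and what your argument omits. Your ``Main obstacle'' paragraph correctly identifies that \(\Grz\) is not canonical, but your proposed workaround (Jankov--Fine on the unimodal reduct) does not recover the bimodal structure you need; the paper's route through the big-cluster theorem is not a stylistic choice but the mechanism that makes the p-morphisms onto the tacks exist.
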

\begin{proof}
%\VS{Work in progress}
%\IS{Say what is known (or follows from Heyting).}
    Clearly,~\(\MatchL^1_1,\,\MatchL^1_2,\) and \(\MatchL^1_{12}\) are not locally tabular: they do not contain the bounded height formulas for \(\Di_1\).
    First, we investigate the shared structure of the canonical general frames of their non-locally tabular extensions.
    % Let \(m< \omega\) be arbitrary.
    Let~\(L\) be a bimodal logic that extends any of these three logics. Assume that \(L\) is not \(k\)-finite for some \(k < \omega\), and consider its \(k\)-canonical general frame.
    By Theorem~\ref{thm:big_cluster}, \(G_{L,k}\) contains a rooted subframe~\(G=(X,R_1,R_2,A)\) with a cluster~\(C\in A\) such that \(H := G\restr C\) has an infinite algebra and a non-locally tabular logic.
    % Let \(\tau\) be the formula that defines~\(C\) in~\(H\).
    We know that~\(\presym_2 \in L\).
    By the same argument as in the proof of Proposition~\ref{prop:S4N_rich_cluster}, it follows that \(R_2\) is symmetric on~\(C\).
    \ISLater{\VS{We might want to make it a separate proposition.}
    - And perhaps we can have more facts about the $\LS4^2 + \presym_2 \subseteq L$.
    }

    % Observe that \(\MatchL^1_1\) is a normal extension of \(\LCom{\Grz3}{\LS5} \).
    % It is easy to see that \(\MatchF^1_1(m)\) validate \(\rp_1\)\IS{Is something missing here?} and \(\dd\) for all \(m\), so these formulas belong to~\(\MatchL^1_1.\)
    % Both formulas are canonical, so they are valid in \(\kripke G.\)

    By \emph{\(R_i\)-clusters} we will mean the clusters of the reduct Kripke frame~\((X,R_i)\).
    Let us show that~\(C\) is an \(R_2\)-cluster.
    Fix any \(a\in C.\)
    Assume for contradiction that there exists \(b\in C\setminus R_2(a).\)
    \VSLater{Rewrite this directly?}
    The transitivity axiom~\(\Div\Div p \to \Div p\) is canonical and thus valid in~\(G\), so all pairs of points in~\(C\) are connected by \(R_1\cup R_2.\)
    Then \(a (R_1\cup R_2) b\), so by assumption~\(a R_1 b.\)
    Recall that \(R_2\) is symmetric on~\(C\), so \(a\not\in R_2(b)\) and thus \(b R_1 a.\)
    Then \(a\) and \(b\) belong to the same \(R_1\)-cluster. 
    Furthermore, for any~\(c\in R_2(a)\) we have \(b (R_1\cup R_2) c\) and~\(c (R_1\cup R_2) b.\)
    It is impossible that \(b R_2 c\) or \(c R_2 b,\) because it would imply that \(b\in R_2(a)\) by the symmetry and transitivity of~\(R_2\).
    Then \(b R_1 c\) and \(c R_1 b.\)
    Since \(c\) was arbitrary, it follows that \(R_2(a)\) belongs to the \(R_1\)-cluster of~\(a\), hence~\(C\) is an \(R_1\)-cluster. Finally,  observe that \(\rep{\Grz.3}{\Di_1}\subseteq L\), then \((X,R_1,A)\models \Grz.3,\) so by Proposition~\ref{prop:Grz3_clusters} any \(R_1\)-cluster has a finite algebra,
    % \IS{This is unclear}
    and therefore \(\Alg H\) is finite, a contradiction. Then \(C\) is an \(R_2\)-cluster.
    
    By Proposition~\ref{prop:MatchL_axioms} we have~\(\dd\in L\), and since this formula is canonical, \(\kripke G\models \dd\). It is easy to check that the validity of~\(\dd\) is preserved in \(R_2\)-clusters, so \(H\models \dd\).
    Furthermore,~\(\rep{\Grz.3}{\Di_1} \subseteq L\)  by Proposition~\ref{prop:MatchL_axioms}, then \((X,R_1,A)\models \Grz.3\), and by Proposition~\ref{prop:Grz3_subreduction} it follows that~\(H \models \rep{\Grz.3}{\Di_1}.\)
    Since the second relation of~\(H\) is symmetric, \(H \models \rep{\LS5}{\Di_2}\).
    % By the canonicity of~\(\Div\Div p \to \Div p\) we have \(R_1\circ R_2 = R_2 \circ R_1 = R_1 \cup R_2,\) so the commutativity holds, and hence by the symmetry of \(R_2\) the Church-Rosser property also holds.\IS{Perhaps I am missing something, but do we need the previous sentence?}
    Since \(R_2\) is universal on \(H,\) we also have \(H \models \Di_1 p \to \Di_2 p\).
    We conclude that \(H \models\Grz.3\Udd. \)
    Since \(\Log H\) is not locally tabular and extends \(\Grz.3\Udd\), these logics coincide by Proposition~\ref{prop:Grz3Udd}.
    Then by Jankov-Fine theorem we have \(H\toto (m,\le,\nabla)\) for any~\(m < \omega\).
    
    % Since \(\rep{\bh_{m}}{\Di_1}\) is disproved in \(\MatchF^1_1(m),\) it is also disproved in some rooted subframe \(G = (X,R_1,R_2,A)\) of \(G_{L}\), so \(G\) contains at least \(m+1\) 

    The axioms \(\rep{\bh_2}{\DiM}\), \(\rep{\lin}{\Di_1}\), \(\rep{\lin}{\Di_2}\), and \(\mck\) belong to \(L\) by Proposition~\ref{prop:MatchL_axioms}.
    The first three of these formulas are canonical, so they are valid in~\(\kripke G\).
    Then the height of~\(G\) is at most~\(2\), and since \(G\) is rooted (it is generated by any point in~\(C\)), both its relations are linear preorders.
    Observe that \(L\) contains \(\rep{\LS4}{\DiM}\), so the formula \(\rep{\mck}{\DiM}\) is also valid in~\(\kripke G\).
    Thus~\(G\) has a maximal point \(a\), that is, \((R_1\cup R_2)(a) = \{a\}.\)

    We provide the rest of the proof for each of the three logics separately.
    %\IS{We only need to give it for one logic.}
    For \(\MatchL^1_1,\) observe that \(R_2\) is an equivalence relation by the symmetry axiom.
    Then \(a\) is \(R_1\)-maximal and forms a singleton cluster with respect to~\(R_2.\)
    Since \(R_1\) is linear, \(a\) is the unique maximal element.\ISLater{This is confusing: maximal is used in a specific sense; it is also unclear maximal where.}
    It follows that~\(R_2\) has precisely two clusters: \(C\) and~\(\{a\}.\)
    Then \(\kripke G\) is isomorphic to \(\kripke H \oplus_1 \sngl.\)
    We showed that~\(H \toto (m,\le,\nabla)\), and therefore by Lemma~\ref{lem:sum_pmorphisms-f}
    it follows that \(\kripke G \toto \MatchF^1_1(m),\)  for all \(m < \omega\).
    Recall that \(C\in A\), thus \(\{a\}\) and the preimages of the points of \(\MatchF^1_1(m)\) in \(H\) also belong to \(A\).
    Then the admissibility condition holds and we have \(G \toto \MatchF^1_1(m)\) for all \(m.\)
    To finish the proof, observe that \(L \subseteq \Log G  \subseteq \Log\{\MatchF^1_1(m)\mid m < \omega\} = \MatchL^1_1.\)

    Now we consider \(\MatchL_2.\)
    By Proposition~\ref{prop:MatchL_axioms}, \(G\) validates \(p \land \Di_1 q \to \Di_2(q\land \Di_2 p)\).
    The formula \(p \land \Di_1 q \to \Di_2(q\land \Di_2 p)\) is a Sahlqvist formula, so it is canonical.
    By its frame condition, any pair of~\(R_1\)-connected points is contained in the same \(R_2\)-cluster.
    Then \(b R_1 a\) for no \(b\in C,\) so it must be the case that~\(b R_2 a\) for all~\(b\in C\).
    It follows that \(\kripke G\) is isomorphic to~\(\kripke H \oplus_2 \sngl.\)
    Similarly to the previous case, by Lemma~\ref{lem:sum_pmorphisms-f} and it follows that \(G \toto \MatchF^1_2(m)\) for all~\(m < \omega\) (the admissibility condition is shown by exactly the same argument as before).
    Then \(L \subseteq \MatchL^1_2.\)
    
    Finally, \(\MatchL^1_{12}\) contains \(\rep{\mck}{\Di_2}\) by Proposition~\ref{prop:MatchL_axioms}.
    Since this logic contains \(\rep{\LS4}{\Di_2}\), this formula is valid in its canonical frame.
    Thus \(G\models \rep{\mck}{\Di_2}\).
    It follows that there exists an \(R_2\)-maximal point.
    By linearity of~\(R_2\), \(a\) is the unique such point.
    The formula~\(p\land \Di_2 q \to \Di_1 q \lor \Di_2 (q \land \Di_2 p)\) is also valid in~\(\kripke G\), since it is Sahlqvist, hence canonical.
    By its frame condition, for any \(b\) such that \(b R_2 a\), either \(b R_1 a\) or \(a R_2 b\) must hold.
    It follows that \(b R_1 a\) for any~\(b\in C.\)
    Then \(\kripke G\) is isomorphic to~\(\kripke H \oplus \sngl\).
    Using the same reasoning as previously, we get \(G \toto \MatchF^1_{12}(m)\) for all~\(m < \omega\) and therefore~\(L \subseteq \MatchL^1_{12}.\)

    The result for \(\MatchL^2_1,\,\MatchL^2_2,\) and \(\MatchL^2_{12}\) follows by interchanging the modalities.
\end{proof}

\begin{remark}
The intuitionistic variant of \(\MatchL^1_1\) is known to be pretabular \cite{Guram-MH-3-2000}. While \(\MatchL^1_1\) is not an extension of \(\Grz\U\) (and so the transfer result discussed in Section \ref{subsec:uni} does not apply), 
is seems plausible that this logic is pre-tabular, as well as the other logics of matches. 

Also, we conjecture that the formulas given in Proposition \ref{prop:MatchL_axioms}
provide complete axiomatizations of the logics of matches.   
\end{remark}

\hide{
The intuitionistic variant of \(\MatchL^1_1\) was shown to be pretabular in~\cite{Guram-MH-3-2000}.
In Theorem~\ref{thm:Grz3Udd}, we used a similar pretabularity result combined with the lattice isomorphism for the extensions of~\(\Grz\U\) to show that \(\Grz.3\Udd\) is prelocally tabular.
This method does not apply to~\(\MatchL^1_1\) since it is not an extension of~\(\Grz\U\).

}

\ISLater{
\subsection{A family of pre-locally tabular bimodal logics of height~\(2\)} 
\includegraphics[width=0.8\linewidth]{prior_tacks.pdf} 
}
\bibliographystyle{amsalpha}
\bibliography{refs}

@Article{Guram-MH-3-2000,
author={Bezhanishvili, Guram},
title={Varieties of Monadic Heyting Algebras. {Part III}},
journal={Studia Logica},
year={2000},
month={Feb},
day={01},
volume={64},
number={2},
pages={215-256},
issn={1572-8730},
doi={10.1023/A:1005285631357},
url={https://doi.org/10.1023/A:1005285631357}
}

@article{Guram-BlokEsakia-2009,
title = {The universal modality, the center of a Heyting algebra, and the Blok–Esakia theorem},
journal = {Annals of Pure and Applied Logic},
volume = {161},
number = {3},
pages = {253-267},
year = {2009},
note = {Papers presented at the Symposium on Logical Foundations of Computer Science 2007},
issn = {0168-0072},
doi = {https://doi.org/10.1016/j.apal.2009.07.002},
url = {https://www.sciencedirect.com/science/article/pii/S0168007209001419},
author = {Guram Bezhanishvili},
keywords = {Universal modality, Heyting algebra, Blok–Esakia theorem},
abstract = {We introduce the bimodal logic S4.Grzu, which is the extension of Bennett’s bimodal logic S4u by Grzegorczyk’s axiom □(□(p→□p)→p)→p and show that the lattice of normal extensions of the intuitionistic modal logic WS5 is isomorphic to the lattice of normal extensions of S4.Grzu, thus generalizing the Blok–Esakia theorem. We also introduce the intuitionistic modal logic WS5.C, which is the extension of WS5 by the axiom ∀(p∨¬p)→(p→∀p), and the bimodal logic S4.GrzuC, which is the extension of Shehtman’s bimodal logic S4uC by Grzegorczyk’s axiom, and show that the lattice of normal extensions of WS5.C is isomorphic to the lattice of normal extensions of S4.GrzuC.}
}

@phdthesis{WolterLat1993,
  author  = "Frank Wolter",
  title   = "Lattices Of Modal Logics",
  school  = "Free University of Berlin",
  year    = "1993"
}

@inproceedings{Tacl2017,
    author = "Shapirovsky, Ilya",
    title = "Locally tabular polymodal logics",
    booktitle = "8th International Conference on Topology, Algebra, and  Categories in Logic (TACL 2017)",
    year = "2017"
}

@Article{NickS5,
author="Bezhanishvili, Nick",
title="Varieties of two-dimensional cylindric algebras. {P}art {I}: {D}iagonal-free case",
journal="Algebra Universalis",
volume="48",
number="1",
pages="11--42",
year="2002"
}

@book{Seg_Essay,
   title =     {An essay in classical modal logic},
   author =    {Segerberg, Krister},
   publisher = {Uppsala Universitet},
   year =      {1971},
   series =    {Filosofska Studier, vol.13},
}

@article{Segerberg1970,
	author = {Krister Segerberg},
	doi = {10.1111/j.1755-2567.1970.tb00429.x},
	journal = {Theoria},
	number = {3},
	pages = {301--322},
	publisher = {Philosophy Department, Stockholm University},
	title = {Modal Logics with Linear Alternative Relations},
	volume = {36},
	year = {1970}
}

@article{Maks1975LT,
author = {L. Maksimova},
journal = {Algebra and Logic},
number = {3},
pages = {304--319},
title = {Modal logics of finite slices},
volume = {14},
year = {1975}
}

@article{Glivenko2021,
author={Ilya B. Shapirovsky},
title={Glivenko's theorem, finite height, and local tabularity},
year={2021},
journal={Journal of Applied Logics -- IfCoLog Journal},
volume={8},
number={8},
editor={Valeria de Paiva and Sergei Artemov},
url={http://www.collegepublications.co.uk/downloads/ifcolog00050.pdf},
pages={2333-2347}
}

@inproceedings{AiML2018-sums,
author={Ilya B. Shapirovsky},
title={Truth-preserving operations on sums of {K}ripke frames},
    year = "2018",
pages={541--558},
volume={12},
    booktitle = "Advances in Modal Logic",
        publisher = "College Publications",
    note={ISBN 978-1848902558}
 }

@book{BDV,
 author    = {Patrick Blackburn and Maarten de Rijke and Yde Venema},
 title     = {Modal Logic},
 publisher = {Cambridge University Press},
 year      = {2001}
}

@book{ManyDim,
  title={Many-dimensional Modal Logics: Theory and Applications},
  author = {Gabbay, D. and Kurucz, A. and Wolter, F. and Zakharyaschev, M.},
  isbn={9780444508263},
  series={Studies in logic and the foundations of mathematics},
  year={2003},
  publisher={North Holland Publishing Company}
}

@book{CZ,
 author    = {Alexander Chagrov and Michael Zakharyaschev},
 title     = {Modal Logic},
 series    = {Oxford Logic Guides},
 volume    = {35},
 publisher = {Oxford University Press},
 year      = {1997}
}

@inproceedings{LocalTab16AiML,
   author = "Ilya B. Shapirovsky and Valentin B. Shehtman",
    title = "Local tabularity without transitivity",
    booktitle = "Advances in Modal Logic",
    year = "2016",
    volume = "11",
    pages = "520--534",
    publisher = "College Publications",
    note={ISBN 978-1-84890-201-5}
 }

@article{Segerberg2DimML,
 ISSN = {00223611, 15730433},
 URL = {http://www.jstor.org/stable/30226970},
 author = {Segerberg, Krister},
 journal = {Journal of Philosophical Logic},
 number = {1},
 pages = {77--96},
 publisher = {Springer},
 title = {Two-Dimensional Modal Logic},
 urldate = {2024-03-11},
 volume = {2},
 year = {1973}
}

@article{GabbayShehtman-ProductsPartI,
author = {Gabbay, Dov and Shehtman, Valentin},
year = {1998},
month = {01},
pages = {73-146},
title = {Products of Modal Logics, Part 1.},
volume = {6},
journal = {Logic Journal of the IGPL}
}

@misc{Meadors_MS4_Arxiv,
author = {Meadors, Chase},
year = {2024},
month = {12},
title = {Local tabularity in {MS4} with {C}asari's axiom},
doi = {10.48550/arXiv.2412.01026},
eprint={2412.01026},
archivePrefix={arXiv},
primaryClass={math.LO},
note={\url{https://arxiv.org/abs/2412.01026}}
}

@article{GBez_Meadors_MS4,
author = {Bezhanishvili, Guram and Meadors, Chase},
year = {2024},
month = {12},
title = {Local finiteness in varieties of MS4-algebras},
journal = {The Journal of Symbolic Logic},
doi = {10.1017/jsl.2024.89}
}

@Article{Wolter1997-atoms,
author={Wolter, F.},
title={A note on atoms in polymodal algebras},
journal={algebra universalis},
year={1997},
month={Jun},
day={01},
volume={37},
number={3},
pages={334-341},
issn={1420-8911},
doi={10.1007/s000120050021},
url={https://doi.org/10.1007/s000120050021}
}

@misc{LTProductsArxiv,
 author       = {Ilya B. Shapirovsky and Vladislav Sliusarev},
  title     = {Locally tabular products of modal logics},
  year={2024},
  eprint={2404.01670},
  archivePrefix={arXiv},
  primaryClass={math.LO},
  note={\url{https://arxiv.org/abs/2404.01670}}
}

@inproceedings{ISh_universal,
    author={Ilya Shapirovsky},
    title={Downward-directed transitive frames with universal relations},
    year = "2006",
    pages={25--28},
    booktitle = "Advances in Modal Logic",
 }

@article{Guram_universal,
title = {The universal modality, the center of a Heyting algebra, and the Blok–Esakia theorem},
journal = {Annals of Pure and Applied Logic},
volume = {161},
number = {3},
pages = {253-267},
year = {2009},
note = {Papers presented at the Symposium on Logical Foundations of Computer Science 2007},
issn = {0168-0072},
doi = {https://doi.org/10.1016/j.apal.2009.07.002},
url = {https://www.sciencedirect.com/science/article/pii/S0168007209001419},
author = {Guram Bezhanishvili},
keywords = {Universal modality, Heyting algebra, Blok–Esakia theorem},
abstract = {We introduce the bimodal logic S4.Grzu, which is the extension of Bennett’s bimodal logic S4u by Grzegorczyk’s axiom □(□(p→□p)→p)→p and show that the lattice of normal extensions of the intuitionistic modal logic WS5 is isomorphic to the lattice of normal extensions of S4.Grzu, thus generalizing the Blok–Esakia theorem. We also introduce the intuitionistic modal logic WS5.C, which is the extension of WS5 by the axiom ∀(p∨¬p)→(p→∀p), and the bimodal logic S4.GrzuC, which is the extension of Shehtman’s bimodal logic S4uC by Grzegorczyk’s axiom, and show that the lattice of normal extensions of WS5.C is isomorphic to the lattice of normal extensions of S4.GrzuC.}
}
\newpage
\section*{Appendix}

\begin{proof}[Proof of Proposition \ref{prop:definable-point}.]
Almost identical to the proof, given in \cite[Theorem 5]{Glivenko2021}.
Let $F=(X,(R_\Di)_\Al)$,  and let $Y$ be the domain of  $\gen{F}{r}$.  
Since  $Y$ is finite,  for every $a$ in $\gen{F}{r}$ there exists a $k$-formula $\alpha(a)$
such that  
\begin{equation}\label{eq:atom}
  \text{for every $b$ in $Y$, }   \alpha(a)\in b \;\tiff \;b=a.
\end{equation}
Without loss of generality we may assume that $\alpha(a)$ has the form
\begin{equation}\label{eq:vars-a}
p_0^\pm\wedge\ldots\wedge p_{k-1}^\pm \wedge \psi,
\end{equation}
where $p_i^\pm$ is either $p_i$ or $\neg p_i$.

Let $\gamma$ be the 
be the following
variant of Jankov-Fine formula, defined as the
conjunction of the following formulas:
\begin{gather}
%\Box^* \left( \bigwedge_{i<n}\; \bigwedge_{\substack{h(a),h(b)\leq h,\\ (a,b)\in R_i}} (\alpha(a)\imp \Di \beta(b) )    \right) \\
\Box^*  \bigwedge \left\{ \alpha(b_1)\imp \Di \alpha(b_2) \mid  b_1,b_2 \in Y, (b_1,b_2)\in R_\Di,\; \Di\in \Al\right\};   \label{eq:Jank1}\\
\Box^*  \bigwedge \left\{ \alpha(b_1)\imp \neg\Di  \alpha(b_2) \mid  b_1,b_2 \in Y, (b_1,b_2)\notin R_\Di,\; \Di\in \Al\right\};\label{eq:Jank2}\\
%\Box^* \bigwedge \left\{ \alpha(a) \imp \neg \alpha(b) \mid a,b\in \Wh,\;a\neq b \right\}\label{eq:Jank3}\\
\Box^* \bigvee \left\{ \alpha(b) \mid b\in Y \right\}.\label{eq:Jank4}
\end{gather}

Notice that for all $c,d\in X$, $\Di \in\Al$ we have
\begin{equation}\label{eq:gamma-up}
\textrm{if } \gamma\in c \textrm{ and } cR_\Di d,  \textrm{ then } \gamma\in d.
\end{equation}

Now for $a\in Y$, let  
\begin{equation}\label{eq:vars-beta}
\beta(a)=\alpha(a)\wedge \gamma.
\end{equation}

By induction on the formula structure, 
for all $k$-formulas $\vf$ we show:  
\begin{equation}\label{eq:same-formulas}
\text{for all $a\in Y$, and all $b\in X$, if } \beta(a)\in b, \textrm{ then } (\vf\in a \tiff \vf\in b).
\end{equation}
The basis of induction follows from (\ref{eq:vars-a}). The Boolean cases are trivial. Assume that $\vf=\Di\psi$.  

Let $\Di \psi\in a$. We have $\psi\in c$ for some $c$ with $a R_\Di c$.
Assume $\beta(a) \in b$. By (\ref{eq:Jank1}),  we have $\alpha(a)\imp\Di\alpha(c)\in b$. 
By \eqref{eq:vars-beta}, $\alpha(a)\in b$, and so  $\Di \alpha(c)\in b$. Then we have $\alpha(c)\in d$ for some $d$ with $bR_\Di d$. By (\ref{eq:gamma-up}), $\gamma\in d$, and so  $\beta(c)\in d$. Clearly, $\beta(c)\in c$. Hence $\psi\in d$  by induction
hypothesis. Thus $\Di \psi\in b$.

\ISLater{Do we need this direction? Do we need \ref{eq:Jank2}? 
-Yes, since otherwise how to check the boolean case (negation)?}
Now let $\Di \psi\in b$. We have $\psi\in d$ for some $d$ with $b R_\Di d$.
From (\ref{eq:Jank4}), we infer that $\alpha(c)\in d$ for some $c\in Y$.
Thus $\Di  \alpha (c)\in b$. Since $\alpha(a)\in b$, it follows from (\ref{eq:Jank2}) that $aR_\Di c$.
By (\ref{eq:gamma-up}) we have $\gamma\in d$, thus
%$\gamma\in y$, that is
$\beta(c)\in d$.
By induction
hypothesis, $\psi\in c$. Hence $\Di  \psi\in a$, as required.

This completes the proof of  (\ref{eq:same-formulas}). 
Consequently, $\beta(a)\in b$ iff $a=b$.
In particular, the only point in $F$ that contains $\beta(r)$ is $r$. 
\end{proof}

%\newpage
%This page was intentionally left blank.
%\newpage

%\input{sec_csym}
%\input{sec_S4hxS5}

%\input{old-directions} 
%\newpage
%\input{storeroomJune2025}

\end{document}